\newtheorem{theo}{Theorem}[section]
\newtheorem{lemma}[theo]{Lemma}
\newtheorem{prop}[theo]{Proposition}
\newtheorem{cor}[theo]{Corollary}
\newtheorem{defi}[theo]{Definition}
\newtheorem*{conjnonumber}{Conjecture}
\theoremstyle{definition}
\newtheorem{ex}[theo]{Example}
\newtheorem{rem}[theo]{Remark}
\newcommand{\f}{\phi}
\newcommand{\ida}{\mathfrak{a}}
\newcommand{\spec}{\operatorname{Spec}}
\newcommand{\Gl}{\operatorname{GL}}
\newcommand{\Aut}{\operatorname{Aut}}
\newcommand{\Hom}{\operatorname{Hom}}
\newcommand{\id}{\operatorname{id}}
\newcommand{\N}{\mathcal{N}}
\newcommand{\nn}{\mathbb{N}}
\newcommand{\CC}{\mathcal{C}}
\newcommand{\I}{\mathbb{I}}
\newcommand{\Gm}{\mathbb{G}_m}
\newcommand{\Ga}{\mathbb{G}_a}
\def\Sl{\operatorname{SL}}
\newcommand{\red}{\operatorname{red}}
\newcommand{\kb}{\overline{k}}
\newcommand{\Rep}{\operatorname{Rep}}
\newcommand{\rank}{\operatorname{rank}}
\newcommand{\ir}{\operatorname{rank}}
\title{Free Proalgebraic Groups}
\author{Michael Wibmer}
\address{Institute of Analysis and Number Theory,
Graz University of Technology,
Kopernikusgasse 24/II,
8010 Graz, Austria}
\email{wibmer@math.tugraz.at}
\begin{document}



\maketitle

\begin{prelims}

\DisplayAbstractInEnglish

\bigskip

\DisplayKeyWords

\medskip

\DisplayMSCclass

\bigskip

\languagesection{Fran\c{c}ais}

\bigskip

\DisplayTitleInFrench

\medskip

\DisplayAbstractInFrench

\end{prelims}


\newpage

\setcounter{tocdepth}{2} 

\tableofcontents


\section{Introduction}


Free profinite groups play an important role in the theory of profinite groups and in Galois theory. For example, the Shafarevic conjecture in inverse Galois theory states that the absolute Galois group of the maximal abelian extension $\mathbb{Q}^{\operatorname{ab}}$ of $\mathbb{Q}$ is the free profinite group on a countably infinite set.  (See e.g., \cite{Harbater:ShafarevicConjecture}.)  A solvable version of Shafarevic's conjecture was established by Iwasawa in \cite{Iwasawa:OnSolvableExtensionsOfAlgebraicNumberFields} and a geometric version of Shafarevic's conjecture is known to be true:

\begin{theo}[Douady, Pop, Harbater] \label{theo: Douady Pop Harbater}
    Let $k$ be an algebraically closed field. Then the absolute Galois group of $K=k(x)$, the field of rational functions over $k$, is the free profinite group on a set of cardinality $|K|$.
\end{theo}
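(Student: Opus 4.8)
The plan is to verify the hypotheses of Iwasawa's recognition criterion for free profinite groups: a profinite group $G$ is isomorphic to the free profinite group on a set of infinite cardinality $\kappa$ as soon as $\operatorname{rank}(G)=\kappa$ and every finite embedding problem for $G$ is properly (i.e.\ surjectively) solvable. Writing $G=\gal(\overline{K}/K)$ with $K=k(x)$, I would split the verification into three parts: (i) $\operatorname{rank}(G)=|K|$; (ii) $G$ is projective, equivalently $\operatorname{cd}(G)\le 1$; and (iii) every finite \emph{split} embedding problem for $G$ is properly solvable. A standard bootstrapping lemma (for a projective group, solvability of all finite embedding problems follows from solvability of the split ones, by factoring an arbitrary embedding problem into a Frattini part, handled by projectivity, and split parts) then shows that (i)+(ii)+(iii) suffice to invoke Iwasawa's criterion and conclude $G\cong \widehat{F}_{|K|}$.

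Part (i) is a cardinality count: every finite separable extension of $K$ is the splitting field of a polynomial over $K$ and $|K|$ is infinite, so there are at most $|K|$ such extensions, whence $\operatorname{rank}(G)\le|K|$; conversely Kummer and Artin--Schreier theory already exhibit $|K|$ linearly disjoint cyclic extensions, giving $\operatorname{rank}(G)\ge|K|$. Part (ii) is Tsen's theorem: since $k$ is algebraically closed, $K=k(x)$ is a $C_1$-field, so $\operatorname{Br}(L)=0$ for every finite extension $L/K$ and more generally $\operatorname{cd}(G)\le 1$; hence $G$ is projective.

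Part (iii) is the heart of the matter and the only serious obstacle. Unwinding the definitions, one is given a Galois extension $L/K$ with group $A$ — corresponding to a connected $A$-Galois branched cover $Y\to\mathbb{P}^1_k$ — and a semidirect product $B=\Gamma\rtimes A$ with $\Gamma$ finite, and one must construct a connected $B$-Galois branched cover of $\mathbb{P}^1_k$ dominating $Y\to\mathbb{P}^1_k$ and inducing the identity on the $A$-quotient. The decisive geometric input is a \emph{relative} realization-with-controlled-ramification statement: over the $Y$-pullback of a small formal (or rigid-analytic) disc on $\mathbb{P}^1_k$ chosen away from the branch locus of $Y/\mathbb{P}^1_k$, realize $\Gamma$ as a Galois group $A$-equivariantly and with ramification confined to that disc, then patch this local $\Gamma$-cover with the split (induced) cover over the complement by formal/GAGA patching (Harbater's Mittag--Leffler patching, or the rigid-analytic / ample-field version of Pop, or Haran--Völklein), arranging connectedness of the outcome through the choice of the local data. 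The genuinely delicate point is the $A$-equivariance throughout the patching, which is what upgrades a bare realization of $B$ over $K$ to an actual solution of the embedding problem. In characteristic zero one may instead shortcut (iii) for $k=\mathbb{C}$ via the Riemann Existence Theorem — finite topological covers of $\mathbb{P}^1(\mathbb{C})$ minus finitely many points have free $\pi_1$, and passing to the limit over the (cofinal, well-behaved) system of finite puncture sets yields freeness of rank $|\mathbb{C}|$ directly — and then transfer to an arbitrary algebraically closed field of characteristic zero by a specialization/model-theoretic comparison of Galois groups; but in positive characteristic the equivariant patching argument (or Pop's ``half Riemann Existence Theorem'') is unavoidable. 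With (i)--(iii) in hand, Iwasawa's criterion gives $\gal(\overline{K}/K)\cong\widehat{F}_{|K|}$.
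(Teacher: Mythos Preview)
The paper does not give its own proof of this theorem; it is cited as a background result due to Douady (characteristic zero) and, independently, Pop and Harbater (positive characteristic). The paper offers only a two-step outline: (1) invoke the Chatzidakis--Melnikov characterization of free profinite groups in terms of embedding problems, and (2) verify that the absolute Galois group of $k(x)$ satisfies it. Your sketch is considerably more detailed and follows broadly the same route, but the freeness criterion you state is not quite right.

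You phrase ``Iwasawa's recognition criterion'' as: a profinite group $G$ with $\operatorname{rank}(G)=\kappa$ is free of rank $\kappa$ provided every finite embedding problem for $G$ is properly solvable. This is valid only for $\kappa=\aleph_0$ --- that is exactly the content of Iwasawa's theorem --- and it fails for uncountable $\kappa$. The criterion the paper cites, and the one actually used by Harbater and Pop, is the Chatzidakis--Melnikov characterization: a profinite group of infinite rank $\kappa$ is free on $\kappa$ generators if and only if every non-trivial finite embedding problem admits $\kappa$ \emph{distinct} proper solutions. Since $|K|=|k|$ is in general uncountable (e.g.\ $k=\mathbb{C}$), the distinction is not cosmetic. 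The repair is minor and already latent in your part (iii): the patching (respectively Riemann Existence) construction depends on a choice of auxiliary branch point on $\mathbb{P}^1_k$, and distinct choices yield distinct ramification loci and hence distinct solutions; with $|k|=|K|$ points available one obtains the required $|K|$ solutions. With this adjustment your outline matches the strategy the paper summarizes.
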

This was first proved by Douady in characteristic zero (\cite{Douady:DeterminationDunGroupeDeGalois}) and then later, independently by Pop (\cite{Pop:EtaleGaloisCoversOfAffineSmoothCurves}) and Harbater (\cite{Harbater:FundamentalGroupsAndEmbeddingProblemsInCharacteristicp}) in positive characteristic.

Differential Galois theory (\cite{SingerPut:differential}, \cite{Magid:LecturesOnDifferentialGaloisTheory}) is an extension of classical Galois theory: Instead of polynomials over a field, one considers linear differential equations over a differential field. The differential Galois group of a linear differential equation is a linear algebraic group. Accordingly, the absolute differential Galois group of a differential field is a projective limit of linear algebraic groups, i.e., an affine group scheme or a proalgebraic group.

A basic example of a differential field is the field $k(x)$ with the derivation $\frac{d}{dx}$. No explicit description of the absolute differential Galois group of $k(x)$ is presently known. However, around the turn of the century, Matzat promoted the idea that their should be a differential analog of Theorem \ref{theo: Douady Pop Harbater}, i.e., a differential version of Shafarevic's conjecture:

\begin{conjnonumber}[Matzat]
    Let $k$ be an algebraically closed field of characteristic zero. Then the absolute differential Galois group of $K=k(x)$ is the free proalgebraic group on a set of cardinality $|K|$.
\end{conjnonumber}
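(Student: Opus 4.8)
The plan is to derive Matzat's conjecture from the embedding-problem characterization of free proalgebraic groups established earlier in the paper, in exact analogy with how the geometric Shafarevic conjecture (Theorem~\ref{theo: Douady Pop Harbater}) is nowadays deduced from Iwasawa's criterion for free profinite groups. Write $\pi = \pi(K)$ for the absolute differential Galois group of $K=k(x)$, i.e.\ the affine group scheme over $k$ with $\pi = \varprojlim \gal(L/K)$, the limit running over all Picard--Vessiot extensions $L/K$. One first settles the cardinality bookkeeping: every Picard--Vessiot extension is generated over $K$ by finitely many entries of a fundamental solution matrix, so there are at most $|K|$ of them and $\pi$ is an inverse limit of at most $|K|$ linear algebraic groups over $k$, giving rank $\leq |K|$; conversely the solution of the inverse differential Galois problem over $k(x)$, together with the fact that arbitrarily many independent $\mathbb{G}_a$-extensions can be realized, forces the rank to equal $|K|$. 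It then remains to show that $\pi$ has the universal property of the free proalgebraic group $F$ on a set of cardinality $|K|$, which by the characterization theorem amounts to showing that \emph{every differential embedding problem for $\pi$ is solvable} (and, in the version needed for infinite rank, that the set of solutions is as large as the rank).

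Concretely, a differential embedding problem over $K$ consists of a Picard--Vessiot extension $L/K$ with group $\bar G=\gal(L/K)$ and a surjection $\alpha\colon G\twoheadrightarrow\bar G$ of affine group schemes over $k$; a solution is a Picard--Vessiot extension $M\supseteq L$ with $\gal(M/K)\cong G$ compatibly with $\alpha$ and the restriction map. A Noetherian/limit reduction lets one assume $G$, hence $\bar G$, is a linear algebraic group, and a Frattini-type argument reduces to \emph{minimal} embedding problems, where $N=\ker\alpha$ is a minimal normal subgroup of $G$: when $N$ lies in the Frattini subgroup the problem is solvable automatically from general structural properties of the proalgebraic group $\pi$ (every epimorphism onto $\pi$ from a proalgebraic group splits over a Frattini kernel), and otherwise the extension splits, $G\cong N\rtimes\bar G$, with $N$ — up to the $\bar G$-action — of $\mathbb{G}_a$-type, a torus $\mathbb{G}_m^r$, a (quasi-)simple algebraic group, or a finite simple group.

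The heart of the matter, and the step I expect to be the main obstacle, is therefore to \emph{solve split differential embedding problems over $k(x)$}. Here the argument should parallel the function-field proof of Theorem~\ref{theo: Douady Pop Harbater}: apply differential patching over $k(x)$ — in the style of Harbater--Hartmann patching, adapted to differential modules via a rigid-analytic, formal, or field-amalgamation avatar of the affine line — to glue, over a collection of patches around many distinct points of $\mathbb{P}^1_k$, a differential module inducing $L/K$ on one patch and many independent copies of a differential module with group $N$ on the remaining patches, twisted by the prescribed $\bar G$-action, so that the differential Galois group of the amalgam is the full semidirect product $N\rtimes\bar G$. The delicate ingredients are (a) constructing, for each of the possible building blocks $N$ above, a differential module over a small patch whose differential Galois group is exactly $N$ and which can be equipped with the required $\bar G$-equivariance, and (b) a Mayer--Vietoris / "independence of patches" statement ensuring that patching does not collapse the Galois group, i.e.\ that the $N$-part and the $\bar G$-part together generate the semidirect product rather than a proper subgroup. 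Granting these, counting the available patches upgrades solvability to the proper (large-solution-set) form, completing the identification $\pi\cong F$ and hence Matzat's conjecture; the same scheme, with $L/K$ allowed to be a differential subfield of a fixed large extension, handles the relative embedding problems needed for the universal property in full.
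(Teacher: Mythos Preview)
The statement you are attempting to prove is presented in the paper as a \emph{conjecture}, not as a theorem. The paper contains no proof of Matzat's conjecture; on the contrary, it explicitly describes the conjecture as open and positions its own contribution as completing only the first of two steps toward it, namely the characterization of free pro-$\CC$-groups via embedding problems (Theorems~\ref{theo: saturated} and~\ref{theo: free=saturated}). The second step, solving enough differential embedding problems over $k(x)$, is not carried out here. The only result in this direction that the paper records is the special case (Theorem~\ref{theo: Intro Matzat special case}, proved in the companion paper \cite{BachmayrHarbaterHartmannWibmer:FreeDifferentialGaloisGroups}) where $k$ is countable and of infinite transcendence degree over $\mathbb{Q}$.

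Your outline is a reasonable sketch of how a proof \emph{would} go if the remaining obstacles could be overcome, and it does track the two-step strategy the paper describes. But the core step you label ``the main obstacle'' --- solving all split differential embedding problems over $k(x)$ with enough independent solutions, for arbitrary algebraically closed $k$ of characteristic zero --- is genuinely open and is precisely what prevents the conjecture from being a theorem. Your appeal to differential patching and the case-by-case analysis of minimal kernels is plausible in broad outline, but ``granting these'' is granting the whole conjecture: the ingredients (a) and (b) you list are not available in the required generality, and the paper makes no claim that they are. In short, there is no proof in the paper to compare against, and your proposal is a strategy rather than a proof.
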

Matzat's conjecture implies Theorem \ref{theo: Douady Pop Harbater} (in characteristic zero) and is a far-reaching generalization of the solution of the inverse problem of differential Galois theory over $k(x)$, which asserts that every linear algebraic group is a differential Galois group over $k(x)$ (\cite{Hartmann:OnTheInverseProblemInDifferentialGaloisTheory}). For example, Matzat's conjecture implies that for a fixed non-trivial linear algebraic group $G$, the set of isomorphism classes of Picard-Vessiot extensions with differential Galois group isomorphic to $G$, has cardinality $|K|$, while the solution of the inverse problem merely asserts that this set is non-empty.

%
%
%

An embedding problem (sometimes also called a lifting problem in the literature) for a profinite group $\Gamma$ consist of two epimorphisms $G\twoheadrightarrow H$ and $\Gamma\twoheadrightarrow H$ of profinite groups. A solution is an epimorphism $\Gamma\twoheadrightarrow G$ such that
\begin{equation} \label{eqn: embedding problem introduction}
    \xymatrix{
    \Gamma \ar@{->>}[rd] \ar@{..>>}[d]\\
    G \ar@{->>}[r] & H
}
\end{equation}
commutes. The proof of Theorem \ref{theo: Douady Pop Harbater} has two main steps:
\begin{enumerate}
    \item The Chatzidakis-Melnikov characterization of free profinite groups in terms of embedding problems (\cite[Prop. 3.5.11]{RibesZalesskii:ProfiniteGroups}): A profinite group $\Gamma$ of infinite rank $\kappa$ is free on a set of cardinality $\kappa$ if and only if every non-trivial embedding problem (\ref{eqn: embedding problem introduction}) with $G$ a finite group has $\kappa$ different solutions.
    \item Establishing that the absolute Galois group of $k(x)$ satisfies the above characterization.
\end{enumerate}

 \renewcommand{\labelenumi}{{\rm (\roman{enumi})}}

Over the last decade, progress towards Matzat's conjecture, along the lines of the above sketch, has been hampered by two aspects: Firstly, there has been no clear definition of what a free proalgebraic group should be, and accordingly, no characterization in terms of embedding problems has been available. Secondly, despite some positive results (\cite{Kovacic:TheInverseProblemInTheGaloisTheoryOfDifferentialFields},\cite{Kovacic:OnTheInverseProblmelII}, \cite{MatzatPut:ConstructiveDifferentailGaloisTheory},\cite{Oberlies:EinbettungsproblemeInDerDifferentialGaloisTheorie},\cite{Hartmann:OnTheInverseProblemInDifferentialGaloisTheory}), not enough has been known about the solvability of embedding problems for the absolute differential Galois group of $k(x)$.

In this article we introduce free proalgebraic groups and we characterize them in terms of embedding problems. In other words, we complete the first step of the two steps towards Matzat's conjecture. Paired with recent progress (\cite{BachmayrHarbaterHartmannWibmer:DifferentialEmbeddingProblems},\cite{BachmayrHartmannHarbaterPopLarge},\cite{BachmayrHarbaterHartmann:DifferentialEmbeddingProblemsOverLaurentSeriesFields}) on the solvability of differential embedding problems, this yields a special case of Matzat's conjecture that will appear in \cite{BachmayrHarbaterHartmannWibmer:FreeDifferentialGaloisGroups}.

\begin{theo}[{\cite[Theorem 3.10]{BachmayrHarbaterHartmannWibmer:FreeDifferentialGaloisGroups}}] \label{theo: Intro Matzat special case}
    Matzat's conjecture is true if the field of constants $k$ is countable and of infinite transcendence degree over $\mathbb{Q}$. In other words, Matzat's conjecture is true for the field $k=\overline{\mathbb{Q}(y_1,y_2,\ldots)}$, the algebraic closure of the field of rational functions in countably many variables over $\mathbb{Q}$.
\end{theo}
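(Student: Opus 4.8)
The plan is to follow the two-step template of the proof of Theorem~\ref{theo: Douady Pop Harbater}. Write $K=k(x)$, let $\mathcal{G}$ be the absolute differential Galois group of $K$ (with field of constants $k$), and set $\kappa=|K|$; since $k$ is countable, $\kappa=\aleph_0$. The first step is carried out in this paper: the characterization of free proalgebraic groups by embedding problems. Applied to $\mathcal{G}$, it reduces the assertion ``$\mathcal{G}$ is the free proalgebraic group of rank $\kappa$'' to proving \emph{(i)} that $\rk\mathcal{G}=\kappa$, and \emph{(ii)} that every non-trivial embedding problem for $\mathcal{G}$ --- a pair of epimorphisms of affine group schemes $\mathcal{G}\twoheadrightarrow H$ and $\varphi\colon G\twoheadrightarrow H$ with $\ker\varphi\neq 1$ --- has $\kappa$ distinct solutions $\psi\colon\mathcal{G}\twoheadrightarrow G$ with $\varphi\circ\psi$ the prescribed map $\mathcal{G}\to H$. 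As with the profinite criterion recalled in the introduction, it suffices to test \emph{(ii)} on linear algebraic $G$; and since $\kappa=\aleph_0$, the existence of even a single solution to each such problem already forces the existence of $\kappa$ of them --- either by an Iwasawa-type bootstrapping, or by the direct twisting argument used at the end of the Douady--Pop--Harbater proof, where one perturbs a given solution by the $\kappa$ ``sufficiently independent'' Picard--Vessiot extensions of $k(x)$ realizing some fixed non-trivial quotient of $\ker\varphi$ (which in characteristic zero may be taken to be $\Ga$, $\Gm$, a simple algebraic group, or a non-trivial finite group). So \emph{(ii)} reduces to the bare solvability of such embedding problems.

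Claim \emph{(i)} is soft. There are at most $|K|$ isomorphism classes of differential modules over $K$ --- a differential module being a finite-dimensional $K$-vector space equipped with a connection, that is, a matrix over $K$ --- hence at most $|K|$ Picard--Vessiot extensions of $K$, and $\mathcal{G}$ is the projective limit of their differential Galois groups; each of these is a linear algebraic group, hence of finite rank as a proalgebraic group (it is topologically finitely generated for the Zariski topology), so $\rk\mathcal{G}\le|K|=\kappa$. For the reverse inequality, $\Hom(\mathcal{G},\Ga)$ is canonically isomorphic to the $k$-vector space $K/\tfrac{d}{dx}(K)$, which has dimension $|K|$ (the classes of $(x-a)^{-1}$, $a\in k$, form a basis), so $\mathcal{G}$ surjects onto a product of $|K|$ copies of $\Ga$ and thus has rank $\ge|K|$.

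Claim \emph{(ii)}, in its reduced form, is the heart of the matter, and it is here that the hypotheses on $k$ enter. By the Picard--Vessiot correspondence, $\mathcal{G}\twoheadrightarrow H$ is a Picard--Vessiot extension $L/K$ with differential Galois group $H$, and a solution of the embedding problem is a Picard--Vessiot extension $M\supseteq L$ of $K$ with group $G$ whose restriction map to $L$ equals $\varphi$. Thus \emph{(ii)} is exactly the statement that differential embedding problems over $k(x)$ are solvable. When $k$ is countable and of infinite transcendence degree over $\mathbb{Q}$, this is precisely what the recent work on differential embedding problems supplies --- solvability of split problems together with the reduction of the general case to the split one --- in \cite{BachmayrHarbaterHartmannWibmer:DifferentialEmbeddingProblems}, \cite{BachmayrHartmannHarbaterPopLarge}, \cite{BachmayrHarbaterHartmann:DifferentialEmbeddingProblemsOverLaurentSeriesFields}.

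The decisive obstacle is claim \emph{(ii)}. Already the split case rests on a delicate patching construction over the field $k(x)$, and reducing arbitrary embedding problems to split ones is what presently confines the statement to countable fields of infinite transcendence degree. The contribution of the present paper is that the remaining, purely group-theoretic half --- recognizing a proalgebraic group as free from the solvability of its embedding problems --- holds unconditionally, so that Matzat's conjecture for such $k$ follows by combining the two steps.
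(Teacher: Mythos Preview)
This theorem is not proved in the present paper; it is quoted from \cite{BachmayrHarbaterHartmannWibmer:FreeDifferentialGaloisGroups}, and the introduction says explicitly that the proof ``will appear'' there. What this paper contributes is the group-theoretic half---Corollary~\ref{cor: Iwasawa for algebaic groups} (stated as Theorem~\ref{theo: Intro Iwasawa} in the introduction)---and your outline correctly identifies the two-step structure: the Iwasawa-type criterion reduces freeness of the absolute differential Galois group $\mathcal{G}$ to the solvability of $\CC$-embedding problems for $\mathcal{G}$, and the cited work on differential embedding problems supplies that solvability over $k(x)$ for the specified~$k$. Your rank computation is sound, and your sketch matches the strategy the introduction lays out.

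One quibble: your condition \emph{(ii)} is phrased as ``$\kappa$ distinct solutions'', which is the profinite Chatzidakis--Melnikov criterion (condition (iv) of Theorem~\ref{theo: saturation for profinite groups}) but is not literally any of the proalgebraic conditions in Theorem~\ref{theo: saturated}---the nearest, condition (vii), asks for $\rank(\Gamma)$ \emph{independent} solutions, which is stronger than mere distinctness. Since you immediately reduce to bare solvability, and since in the countable-rank case that is exactly what Corollary~\ref{cor: Iwasawa for algebaic groups} requires, the detour through ``$\kappa$ solutions'' and the twisting sketch is harmless but unnecessary (and the twisting argument as written would need more care to make rigorous in the proalgebraic setting). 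There is no detailed proof in this paper to compare against beyond this.
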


The restriction that $k$ is countable enters the picture because over a countable field our characterization of free proalgebraic groups on a countably infinite set is particularly simple. (See Theorem \ref{theo: Intro Iwasawa} below.) However, we do provide a characterization (in fact several equivalent ones) of free proalgebraic groups over fields of arbitrary cardinality. In \cite{BachmayrHarbaterHartmannWibmer:FreeDifferentialGaloisGroups} these equivalent characterizations of free proalgebraic groups in terms of embedding problems are translated to equivalent characterizations of differential fields with a free absolute differential Galois group in terms of differential embedding problems. We hope that this characterization and
work in progress 
 on differential embedding problems will lead to a proof of Matzat's conjecture for uncountable fields $k$, in particular, for the classical case $k=\mathbb{C}$.

\vspace{5mm}
%

In this article we introduce two classes of proalgebraic groups and we answer the question when these two notions coincide.
Let us discuss the two classes individually:

%

\subsection*{Free proalgebraic groups}

We introduce free proalgebraic groups through a universal mapping property: The free proalgebraic group on a set $X$ over a field $k$ is a proalgebraic group $\Gamma(X)$ over $k$ together with a map $X\to \Gamma(X)(\kb)$ that is universal among all maps satisfying the following property: Every morphism from $\Gamma(X)$ to an algebraic group maps almost all elements of $X$ to the identity. To establish the existence of free proalgebraic groups we use the tannakian machinery (\cite{DeligneMilne:TannakianCategories}, \cite{Deligne:categoriestannakien}).

As in the theory of profinite groups, where one considers, e.g., pro-solvable groups or pro-$p$-groups, it is often convenient or interesting to restrict attention to a certain class $\CC$ of linear algebraic groups, for example the class of all reductive algebraic groups, or the class of all unipotent algebraic groups. As long as the class $\CC$ satisfies certain closure properties there is a well-behaved notion of a pro-$\CC$-group and indeed we introduce free proalgebraic groups in this context, i.e., we define free pro-$\CC$-groups.

Our notion of free pro-$\CC$-groups encompasses three previously known constructions. Firstly, if $\CC$ is a class of finite constant algebraic groups corresponding to a class $\mathtt{C}$ of finite groups, then a free pro-$\CC$-group can be identified with a profinite group, indeed with a free pro-$\mathtt{C}$-group. In this spirit, most of our results contain familiar results about profinite groups as a special case.

Secondly, the free proalgebraic group on a finite set $X$ is the proalgebraic completion of the (abstract) free group on $X$.

Thirdly, free prounipotent groups have already been introduced in \cite{LubotzkyMagid:CohomologyOfUnipotentAndPropunipotentGroups}, where they were studied in connection with the cohomology of unipotent groups and characterized as the prounipotent groups of cohomological dimension one. Their internal structure has further been investigated in \cite{LubotzkyMagid:FreeProunipotentGroups}. Moreover, in \cite{Magid:ThePicardVessiotAntiderivativeClosure}, a unipotent version of Matzat's conjecture over $\mathbb{C}$ has been proved: The differential Galois group of the maximal prounipotent Picard-Vessiot extension of $\mathbb{C}(x)$, is a free prounipotent group.

We note that free profinite groups play a distinguished role in the theory of profinite groups (\cite{RibesZalesskii:ProfiniteGroups}, \cite{FriedJarden:FieldArithmetic}) and that there are several results about free profinite groups that we do not touch upon in this paper that suggest themselves for a generalization to free proalgebraic groups.

\subsection*{Saturated proalgebraic groups}

Roughly speaking, saturated proalgebraic groups behave like universal objects in the category of proalgebraic groups. Our defintion of saturated proalgebraic groups gives a precise meaning to the vague idea of forming the projective limit of all linear algebraic groups, or of forming the projective limit of all linear algebraic groups belonging to a certain class $\CC$.

Every proalgebraic group $\Gamma$ can canonically be written as a projective limit of linear algebraic groups. Indeed, $\Gamma=\varprojlim_{N}\Gamma/N$, where $N$ ranges over all normal closed subgroups of $\Gamma$ such that $\Gamma/N$ is algebraic.
There is no difficulty in constructing a proalgebraic group $\Gamma$ such that, up to isomorphism, all linear algebraic groups occur as quotients of $\Gamma$. More generally, one may construct a proalgebraic group $\Gamma$ such that, up to isomorphism, all epimorphisms of linear algebraic groups occur in the canonical projective system for $\Gamma$. This property does not determine $\Gamma$ up to isomorphism. So one may make the stronger requirement, that for every algebraic quotient $H$ of $\Gamma$ every epimorphism $G\twoheadrightarrow H$ of linear algebraic groups occurs, up to isomorphism, in the canonical projective system for $\Gamma$, i.e., we ask that the embedding problem
$$
\xymatrix{
    \Gamma \ar@{->>}[rd] \ar@{..>>}[d]\\
    G \ar@{->>}[r] & H
}
$$
for $\Gamma$ has a solution. Even this property does not determine $\Gamma$ up to isomorphism. However, it turns out that, if $k$ is countable and we require that $\Gamma$ has countable rank, i.e., $\Gamma$ is the projective limit of linear algebraic groups, where the projective limit is taken over a countable set, then there exists, up to isomorphism, a unique such $\Gamma$. Moreover, if $k$ has characteristic zero, $\Gamma$ is isomorphic to the free proalgebraic group on a countably infinite set. To summarize, we have:

\begin{theo}[Corollary \ref{cor: Iwasawa for algebaic groups}] \label{theo: Intro Iwasawa}
    Let $k$ be a countable field of characteristic zero and $\Gamma$ a proalgebraic group over $k$ of countable rank. Then $\Gamma$ is a free proalgebraic group on a countably infinite set if and only if for every epimorphism $G\twoheadrightarrow H$ of linear algebraic groups and every epimorphism $\Gamma\twoheadrightarrow H$ the embedding problem
    $$
    \xymatrix{
        \Gamma \ar@{->>}[rd] \ar@{..>>}[d]\\
        G \ar@{->>}[r] & H
    }
    $$
    has a solution.
\end{theo}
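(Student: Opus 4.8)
This is the proalgebraic analogue of Iwasawa's theorem on the free profinite group of countable rank, and the plan is to prove it the same way: reduce it to our embedding-problem characterization of free pro-$\CC$-groups (the analogue of the Chatzidakis--Melnikov theorem), namely that a proalgebraic group $\Gamma$ of infinite rank $\kappa$ is free on a set of cardinality $\kappa$ if and only if every non-trivial embedding problem for $\Gamma$ with $G$ a linear algebraic group has $\kappa$ distinct solutions. With $\kappa=\aleph_0$, everything then comes down to comparing the condition ``every embedding problem for $\Gamma$ with $G,H$ algebraic is solvable'' with the condition ``every non-trivial such embedding problem has $\aleph_0$ solutions''. The only non-formal point is passing from a single solution to infinitely many, which I would do with iterated fibre products.

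\textbf{The easy direction.} If $\Gamma$ is free proalgebraic on a countably infinite set, then $\rk\Gamma=\aleph_0$, so by the characterization every non-trivial embedding problem for $\Gamma$ with algebraic $G$ has $\aleph_0$, in particular at least one, solution; and an embedding problem in which $\alpha\colon G\twoheadrightarrow H$ is an isomorphism is solved by $\alpha^{-1}\circ\beta\colon\Gamma\twoheadrightarrow G$. Hence every embedding problem for $\Gamma$ with $G$ and $H$ algebraic is solvable.

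\textbf{The main direction.} Conversely, assume every embedding problem for $\Gamma$ with algebraic $G,H$ is solvable. First, $\rk\Gamma$ is infinite, by the same elementary reason as in the profinite case: taking $H=1$, every linear algebraic group would be a quotient of a finite-rank $\Gamma$, but (in characteristic zero) if $\Gamma$ is generated by $n$ elements then so is each of its algebraic quotients, whereas $\Ga^{n+1}$ is not --- its $n$-generated closed subgroups are $\kb$-linear subspaces of dimension $\le n$. With $\rk\Gamma\le\aleph_0$ this gives $\rk\Gamma=\aleph_0$. It remains to verify the characterization. So let $\mathcal E=(\alpha\colon G\twoheadrightarrow H,\ \beta\colon\Gamma\twoheadrightarrow H)$ be a non-trivial embedding problem with $G$ algebraic, and $\gamma_0\colon\Gamma\twoheadrightarrow G$ a solution (which exists by hypothesis). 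For each $n\ge 1$ form the $n$-fold fibre product $P_n=G\times_H\cdots\times_H G$, with projections $q_1,\dots,q_n\colon P_n\to G$; these are faithfully flat, hence epimorphisms of algebraic groups (base changes and composites of the faithfully flat $\alpha$), and $\alpha q_i=\alpha q_j$ for all $i,j$. Then $(q_1\colon P_n\twoheadrightarrow G,\ \gamma_0\colon\Gamma\twoheadrightarrow G)$ is an embedding problem; a solution $\delta\colon\Gamma\twoheadrightarrow P_n$ gives, via $\gamma_i:=q_i\delta$, solutions of $\mathcal E$ (since $\alpha q_i\delta=\alpha q_1\delta=\alpha\gamma_0=\beta$), and $\gamma_1,\dots,\gamma_n$ are pairwise distinct: if $\gamma_i=\gamma_j$ for $i\ne j$ then $\delta$ would factor through the equalizer of $q_i$ and $q_j$, which is a \emph{proper} closed subgroup of $P_n$ because $\ker\alpha\ne 1$ (non-triviality of $\mathcal E$), contradicting that $\delta$ is an epimorphism. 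Hence $\mathcal E$ has at least $n$ solutions for every $n$, so at least $\aleph_0$. By the characterization, $\Gamma$ is free proalgebraic on a set of cardinality $\rk\Gamma=\aleph_0$.

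\textbf{Where the difficulty sits.} I expect no real difficulty in this corollary itself: essentially all the content is carried by the existence of free pro-$\CC$-groups (via the Tannakian formalism) and by their embedding-problem characterization, and the only new ingredients above are the fibre-product upgrade from one solution to $\aleph_0$ solutions and the elementary remark that solvability of all embedding problems forces infinite rank. The genuine work lies upstream, in the characterization theorem. It is worth noting why the countability hypotheses are indispensable: the fibre-product argument produces only $\aleph_0$ solutions, exactly matching the requirement for rank $\aleph_0$ but not for larger ranks, so over uncountable $k$ one has to appeal to the finer equivalent characterizations rather than to bare solvability of embedding problems; and characteristic zero enters through the characterization theorem, consistently with the fact that the saturated proalgebraic group of countable rank coincides with the free one only in characteristic zero.
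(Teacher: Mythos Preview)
Your argument is correct, but it takes a noticeably longer route than the paper's, and it rests on a characterization that is not quite the one the paper proves.

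The paper's proof is a two-line observation. When $\rank(\Gamma)=\aleph_0$, condition (ii) of Theorem~\ref{theo: saturated} (every pro-$\CC$-embedding problem with $\rank(H)<\rank(\Gamma)$ and algebraic kernel is solvable) reduces to: every $\CC$-embedding problem is solvable, since $\rank(H)<\aleph_0$ forces $H$ algebraic and then algebraic kernel forces $G$ algebraic. So the hypothesis of the corollary is literally condition (ii); $\Gamma$ is therefore saturated, and Theorem~\ref{theo: free=saturated} (which needs $\rank(\Gamma)\ge|k|$, hence the countability of $k$) finishes. No fibre products, no counting of solutions.

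You instead invoke ``the analogue of Chatzidakis--Melnikov'': free of rank $\kappa$ iff every non-trivial algebraic embedding problem has $\kappa$ \emph{distinct} solutions. That statement is not among the seven equivalent conditions of Theorem~\ref{theo: saturated}; the closest is condition (vii), which asks for $\rank(\Gamma)$ \emph{independent} solutions. Your fibre-product construction in fact produces $n$ independent solutions for each $n$ (the surjection $\delta\colon\Gamma\twoheadrightarrow P_n$ is exactly independence), but these families are not nested, so you do not directly obtain an independent family of size $\aleph_0$ and hence do not verify (vii). Your argument still succeeds, but only because producing \emph{one} solution already verifies condition (ii) in the countable-rank case; the whole upgrade from one solution to many is superfluous. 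A small further point: your reasoning that $\rank(\Gamma)$ must be infinite speaks of $\Gamma$ being ``generated by $n$ elements'', which is not the paper's notion of rank; in this paper, finite rank simply means $\Gamma$ is algebraic, and then a dimension count (e.g.\ $\Ga^n$ for all $n$ cannot all be quotients) gives the contradiction more cleanly.
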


Theorem \ref{theo: Intro Iwasawa} can be seen as an algebraic-geometric version of Iwasawa's freeness theorem (\cite[Theorem 4]{Iwasawa:OnSolvableExtensionsOfAlgebraicNumberFields}, \cite[Cor. 3..5.10]{RibesZalesskii:ProfiniteGroups}) which states that a profinite group of countably infinite rank is free on a countably infinite set if and only if every embedding problem
$$
\xymatrix{
    \Gamma \ar@{->>}[rd] \ar@{..>>}[d]\\
    G \ar@{->>}[r] & H
}
$$
where $G$ is a finite group, is solvable. In fact, we establish a relative version or $\CC$-version  of Theorem~\ref{theo: Intro Iwasawa} so that indeed Iwasawa's result is a special case of ours. Theorem \ref{theo: Intro Iwasawa} is exactly the group theoretic input needed for the proof of the special case of Matzat's conjecture (Theorem~\ref{theo: Intro Matzat special case}).

However, we do provide a version of Theorem \ref{theo: Intro Iwasawa} that applies for base fields of arbitrary cardinality, that we will discuss now. The rank of a proalgebraic group $G$ can be defined as the smallest cardinal $\kappa$ such that $G$ is the projective limit of linear algebraic groups, taken over a directed set of cardinality $\kappa$. A pro-$\CC$-group $\Gamma$ is saturated if for all epimorphisms $G\twoheadrightarrow H$, $\Gamma\twoheadrightarrow H$ of pro-$\CC$-groups with $\rank(H)<\rank(\Gamma)$ and $\rank(G)\leq\rank(\Gamma)$ the embedding problem
$$
\xymatrix{
    \Gamma \ar@{->>}[rd] \ar@{..>>}[d]\\
    G \ar@{->>}[r] & H
}
$$
has a solution. Morally, a proalgebraic group is saturated if it solves as many embedding problems as possible. In Theorem \ref{theo: saturated} we provide several equivalent characterizations of this idea. The general version of Theorem \ref{theo: Intro Iwasawa} states the following:

\begin{theo}[Theorem \ref{theo: free=saturated}] \label{theo: Intro main}
    Let $k$ be a field of characteristic zero and let $\Gamma$ be a pro-$\CC$-group whose rank is such that $\rank(\Gamma)=\kappa\geq|k|$. Then $\Gamma$ is the free pro-$\CC$-group on a set of cardinality $\kappa$ if and only if $\Gamma$ is saturated.
\end{theo}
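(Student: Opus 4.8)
The plan is to prove the two implications by rather different means and then to glue them with a uniqueness statement. Throughout, note that $\operatorname{char}(k)=0$ forces $k$ to be infinite, so $\kappa\geq|k|\geq\aleph_0$ is an infinite cardinal, and that the free pro-$\CC$-group $\Gamma(X)$ on a set $X$ with $|X|=\kappa$ has $\rank(\Gamma(X))=\kappa$ precisely because $|X|\geq|k|$ (this was established earlier). Hence it suffices to show (a) that $\Gamma(X)$ is saturated, and (b) that any two saturated pro-$\CC$-groups of the same rank are isomorphic; granting these, a saturated $\Gamma$ with $\rank(\Gamma)=\kappa$ is isomorphic to $\Gamma(X)$ and we are done.

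For (a), let $\alpha\colon G\twoheadrightarrow H$ and $\beta\colon\Gamma(X)\twoheadrightarrow H$ be an embedding problem with $\rank(H)<\kappa$ and $\rank(G)\leq\kappa$. The universal property of $\Gamma(X)$ identifies morphisms out of $\Gamma(X)$ with ``convergent'' families indexed by $X$, and in particular shows that any morphism from $\Gamma(X)$ to an algebraic group has finite support. Writing $H$ as a filtered inverse limit of $\rank(H)$ many algebraic quotients and pulling each back along $\beta$, one deduces that $\beta$ kills all but $<\kappa$ of the free generators, hence factors as $\Gamma(X)\twoheadrightarrow\Gamma(X_0)\xrightarrow{\ \overline\beta\ }H$ with $X_0\subseteq X$, $|X_0|<\kappa$. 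Since free pro-$\CC$-groups are projective objects in the category of pro-$\CC$-groups — a structural fact from the earlier sections, and one of the places where characteristic zero is used — the map $\overline\beta$ lifts along $\alpha$ to some $\gamma_0\colon\Gamma(X_0)\to G$ with $\alpha\gamma_0=\overline\beta$. To upgrade this to an epimorphism, write $X=X_0\sqcup X'$ with $|X'|=\kappa$ and extend the convergent family defining $\gamma_0$ by a convergent family $X'\to(\ker\alpha)(\kb)$ generating $\ker\alpha$ densely; such a family exists because $\rank(\ker\alpha)\leq\rank(G)\leq\kappa$ and every pro-$\CC$-group of rank $\leq\kappa$ is a quotient of the free pro-$\CC$-group on $\kappa$ generators (again part of the basic theory). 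Since $\gamma_0(\Gamma(X_0))$ already surjects onto $H$, the combined image is dense in $G$; the combined family is still convergent, and the resulting $\gamma\colon\Gamma(X)\to G$ satisfies $\alpha\gamma=\beta$ (it kills $X'$, matching $\beta$ there) and is an epimorphism. Thus $\Gamma(X)$ is saturated.

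For (b) I would run a transfinite back-and-forth. Given saturated $\Gamma_1,\Gamma_2$ of rank $\kappa$, fix for each $i$ a cofinal family $\{Q^i_\beta\}_{\beta<\kappa}$ in the directed poset of algebraic quotients of $\Gamma_i$, and build by recursion on $\alpha<\kappa$ compatible inverse systems of pro-$\CC$-group quotients $\Gamma_i\twoheadrightarrow H^i_\alpha$ together with isomorphisms $\theta_\alpha\colon H^1_\alpha\xrightarrow{\sim}H^2_\alpha$ commuting with the transition maps, arranging $\rank(H^i_\alpha)<\kappa$ at every stage. At a successor stage, alternating sides, enlarge $H^1_\alpha$ to the image $G_1$ of $\Gamma_1$ in $H^1_\alpha\times Q^1_\alpha$ (still of rank $<\kappa$, since $\kappa$ is infinite and only $<\kappa$ many algebraic pieces have accumulated), transport the epimorphism $G_1\twoheadrightarrow H^1_\alpha$ across $\theta_\alpha$ to an epimorphism onto $H^2_\alpha$, and apply saturation of $\Gamma_2$ to the embedding problem given by this epimorphism and by $\Gamma_2\twoheadrightarrow H^2_\alpha$ (the hypotheses $\rank(H^2_\alpha)<\kappa=\rank(\Gamma_2)$ and $\rank(G_1)\leq\kappa$ hold) to obtain $\Gamma_2\twoheadrightarrow G_1$; set $H^i_{\alpha+1}:=G_1$ with $\theta_{\alpha+1}$ the identity. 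At a limit stage take inverse limits, which are pro-$\CC$-groups of rank $<\kappa$. The cofinality of the families $\{Q^i_\beta\}$ forces $\bigcap_\alpha\ker(\Gamma_i\twoheadrightarrow H^i_\alpha)=1$, so $\Gamma_1\cong\varprojlim_\alpha H^1_\alpha\cong\varprojlim_\alpha H^2_\alpha\cong\Gamma_2$.

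I expect the genuine difficulty to be twofold. First, in (a), the density requirement on the family over $X'$ — equivalently, the assertion that every pro-$\CC$-group of rank $\leq\kappa$ is a quotient of the free one on $\kappa$ generators, and likewise the projectivity of $\Gamma(X)$ — is more delicate than it looks; the crucial underlying point is that a strictly descending chain of algebraic quotients of a pro-$\CC$-group has length at most $\omega$, so that the generating family can be assembled level by level while remaining convergent and compatible with all transition maps. Second, in (b), the transfinite bookkeeping must be carried out carefully for an arbitrary, possibly singular, cardinal $\kappa$: one must keep each accumulated quotient $H^i_\alpha$ of rank strictly below $\kappa$ (so that saturation of the other group applies) while still exhausting $\Gamma_i$ in the limit, and keep the two inverse systems, the isomorphisms between them, and every transition map strictly compatible.
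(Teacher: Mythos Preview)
Your part (b) --- uniqueness of saturated pro-$\CC$-groups of a given rank via transfinite back-and-forth --- is essentially the paper's Theorem~\ref{theo: uniqueness}, and your sketch is correct in outline.

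Part (a), however, rests on two claims that are neither proved in the paper nor easy: that free pro-$\CC$-groups are projective, and that every pro-$\CC$-group of rank $\leq\kappa$ is a quotient of $\Gamma^\CC(X)$. For the first, lifting a convergent family $(h_x)_{x\in X_0}\subseteq H(\kb)$ along an epimorphism $\alpha\colon G\twoheadrightarrow H$ of \emph{pro}algebraic groups requires both that each $h_x$ have a preimage in $G(\kb)$ and that the chosen preimages again form a convergent family in $G$; neither is automatic when $G$ is not algebraic, and the paper contains no such statement. The second claim is essentially the $H=1$ case of what you are trying to prove, and establishing it independently would require producing a convergent generating family in $M(\kb)$ for an arbitrary pro-$\CC$-group $M$ of rank $\leq\kappa$, which runs into the same difficulties. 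Your side remark that ``a strictly descending chain of algebraic quotients has length at most $\omega$'' is also false: such chains can have any ordinal length up to $\rank(G)$.

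The paper sidesteps all of this by verifying condition (ii) of Theorem~\ref{theo: saturated} rather than (i): it suffices to solve pro-$\CC$-embedding problems with \emph{algebraic} kernel and $\rank(H)<\rank(\Gamma^\CC(X))=|X|$. For these, Lemma~\ref{lemma: every morphism is fibre product} writes $G\simeq H'\times_{H''}H$ with $H',H''$ algebraic, and Proposition~\ref{prop: free groups are almost saturated} constructs a solution by hand, specifying where each $x\in X$ should go: those $x$ with $\iota(x)\notin\ker(\beta)(\kb)$ are sent to compatible preimages in $(H'\times_{H''}H)(\kb)$, while finitely many of the (by Lemma~\ref{lemma: ker has infinitely many elements} infinitely many) $x$ with $\iota(x)\in\ker(\beta)(\kb)$ are sent to a finite generating set of $\ker(\alpha)(\kb)$. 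The existence of such a finite generating set is Lemma~\ref{lemma: finite generation}, and this is exactly where characteristic zero enters. No projectivity statement and no ``every pro-$\CC$-group is a quotient of the free one'' is needed.
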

We show by example that the assumption $\rank(\Gamma)\geq |k|$ in Theorem \ref{theo: Intro main} is necessary.

\vspace{5mm}

We conclude this introduction with a brief outline of the article: After introducing some basic terminology and constructions, the existence of free pro-$\CC$-groups is established in Section 2 using the tannakian machinery. We also compute some examples of free pro-$\CC$-groups. In Section 3 saturated pro-$\CC$-groups are introduced and several equivalent characterizations of saturated pro-$\CC$-groups are established. We then discuss the questions of existence and uniqueness of saturated pro-$\CC$-groups: Saturated pro-$\CC$-groups of a fixed rank are unique up to isomorphism. However, their existence can only be guaranteed for a rank at least the cardinality of the base field. We also show that, over a field of characteristic zero, a free pro-$\CC$-group $\Gamma$ on a set of cardinality $\kappa$ is saturated if $\rank(\Gamma)=\kappa$. Paired with the uniqueness of saturated pro-$\CC$-groups, this yields Theorem \ref{theo: Intro main}.


\vspace{5mm}

The author is grateful to Annette Bachmayr, Zo\'{e} Chatzidakis, David Harbater, Julia Hartmann, Andy Magid and Anand Pillay for helpful discussions during the preparation of this manuscript. 

\section{Free pro-$\CC$-groups}

The main goal of this section is to establish the existence of free pro-$\CC$-groups (Definition \ref{defi: free proalgebraic group}). We first need to introduce some terminology.

\subsection{Preliminaries and notation}

All rings are assumed to be commutative and unital. We work over a fixed base field $k$ throughout. The algebraic closure of $k$ is denoted by $\kb$. We assume some familiarity with the theory of linear algebraic groups and affine group schemes. We recall some basic results and notions that will be used throughout the text below. (See e.g., \cite{Milne:AlgebraicGroupsTheTheoryOfGroupSchemesOfFiniteTypeOverAField}, \cite{DemazureGabriel:GroupesAlgebriques} or \cite{Waterhouse:IntroductiontoAffineGroupSchemes}.)

To avoid endless repetitions of the word affine we use the term \emph{algebraic group} to mean affine group scheme of finite type over $k$. In particular, over a field of positive characteristic, an algebraic group need not be reduced. A \emph{proalgebraic group} is, by definition, an affine group scheme over $k$.  This terminology emphasizes the approach of this paper and is justified by the fact that the proalgebraic groups are precisely the projective limits of algebraic groups. Projective limits are always assumed to be taken over a directed partially ordered set. Projective limits exist in the category of proalgebraic groups and they can indeed be taken pointwise, i.e., $(\varprojlim G_i)(R)=\varprojlim G_i(R)$ for any $k$-algebra $R$.

 A \emph{closed subgroup} of a proalgebraic group is a closed subgroup scheme.
Let $G$ be a proalgebraic group. A closed normal subgroup $N$ of $G$ is called \emph{coalgebraic} if the quotient $G/N$ is algebraic, i.e., of finite type. The isomorphism theorems from the theory of (abstract) groups hold verbatim for proalgebraic groups.

If $X$ is an affine scheme, we write $X_{\operatorname{red}}$ for the underlying reduced scheme. Similarly, if $R$ is a ring, we write $R_{\operatorname{red}}$ for the quotient of $R$ by its nilradical.

If $X$ is an affine scheme over $k$, we write $k[X]$ for the $k$-algebra of global sections of $X$, so $X=\spec(k[X])$. If $\f\colon X\to Y$ is a morphism of affine schemes over $k$, we denote the scheme-theoretic image of $\f$ by $\f(X)$, i.e., $\f(X)$ is the smallest closed subscheme of $Y$ such that $\f$ factors through the inclusion $\f(X)\hookrightarrow Y$. In terms of ideals, the defining ideal of $\f(X)$ is simply the kernel of the dual map $\f^*\colon k[Y]\to k[X]$. If $\f\colon G\to H$ is a morphism of proalgebraic groups, then $\f(G)$ is a closed subgroup of $H$.
For a morphism $\f\colon G\to H$ of proalgebraic groups the following conditions are equivalent:
\begin{itemize}
    \item $\f(G)=H$.
    \item The dual map $\f^*\colon k[H]\to k[G]$ is injective.
    \item There exists a normal closed subgroup $N$ of $G$ such that $H\simeq G/N$ and $\f$ identifies with the canonical map $G\to G/N$.
    \item For every $k$-algebra $R$ and $h\in H(R)$, there exists a faithfully flat $R$-algebra $S$ and an element $g\in G(S)$ such that $g$ maps to $h\in H(R)\subseteq G(S)$ under $\f$.
\end{itemize}
If these conditions are satisfied we call $\f$ an \emph{epimorphism} and indicate this by writing $G\twoheadrightarrow H$. If $G\twoheadrightarrow H$ is an epimorphism of algebraic groups, then $G(\kb)\to H(\kb)$ is a surjective map. (The converse is true if $H$ is smooth.)

We will often treat a proalgebraic group $G$ as a functor from the category of $k$-algebras to the category of groups. For a $k$-algebra $R$ we then often identify $G(R)$ with $\Hom(k[G],R)$, the set of $k$-algebra homomorphisms from $k[G]$ to $R$. For a morphism $\f\colon G\to H$ of proalgebraic groups we denote with $\f_R\colon G(R)\to H(R)$ the corresponding homomorphism of groups. If confusion is unlikely, we may write $\f(g)$ instead of $\f_R(g)$ for $g\in G(R)$.

To define the structure of a proalgebraic group on an affine scheme $G$ over $k$ is equivalent to defining the structure of a Hopf algebra on $k[G]$. The normal closed subgroups of $G$ are in one-to-one correspondence with the Hopf subalgebras of $k[G]$. In more detail, a normal closed subgroup $N$ of $G$ corresponds to the image of $k[G/N]\to k[G]$, the morphism of Hopf algebras, dual to the canonical map $G\to G/N$. We will usually identify $k[G/N]$ with this Hopf subalgebra of $k[G]$. For normal closed subgroups $N_1,N_2$ of $G$, we have $k[G/N_1]\cap k[G/N_2]=k[G/N_1N_2]$ and $k[G/N_1]\cdot k[G/N_2]=k[G/(N_1\cap N_2)]$.

An algebraic group $G$ is \emph{finite} if $k[G]$ is a finite dimensional vector space over $k$. The identity component of an algebraic group $G$ is denoted by $G^o$. This notion extends to proalgebraic groups: If $G=\varprojlim_{i\in I} G_i$ is the projective limit of algebraic groups $G_i$, then $G^o=\varprojlim_{i\in I} G_i^o$.

For an abelian group $M$, we denote by $D(M)$ the functor from the category of $k$-algebras to the category of groups given by
$$D(M)(R)=\Hom(M,R^\times)$$
for any $k$-algebra $R$. Here $\Hom(M,R^\times)$ denotes the set of homomorphism of abelian groups from $M$ to $R^\times$. Then $D(M)$ is representable, i.e., a proalgebraic group. Indeed, $D(M)$ is represented by $k[D(M)]=kM$, the group algebra of $M$ over $k$. A proalgebraic group isomorphic to some $D(M)$ is called \emph{diagonalizable}.
The (contravariant) functor $M\rightsquigarrow D(M)$ is exact and fully faithful.

Any finite group $\mathtt{G}$ can naturally be interpreted as an algebraic group over $k$. This algebraic group is often called the \emph{constant} group scheme associated with $\mathtt{G}$, we denote it by $\mathtt{G}_k$.
The functor $\mathtt{G}\rightsquigarrow\mathtt{G}_k$ from the category of finite groups to the category of algebaic groups is fully faithful and extends to a fully faithful functor from the category of profinite groups to the category of proalgebraic groups: If $\mathtt{G}=\varprojlim_{i\in I}\mathtt{G}_i$ is a projective limit of finite groups $\mathtt{G}_i$, then $\mathtt{G}_k=\varprojlim_{i\in I}(\mathtt{G}_i)_k$ is a projective limit of finite constant group schemes. The group $\mathtt{G}_k(k)$ can be identified with $\mathtt{G}$ itself. All prime ideals of $\spec(k[\mathtt{G}_k])$ are maximal (and minimal) and the kernel of some morphism $k[\mathtt{G}_k]\to k$ of $k$-algebras. Thus $\spec(k[\mathtt{G}_k])$ can also be identified with $\mathtt{G}$. Moreover, the topologies on $\spec(k[\mathtt{G}_k])$ and  $\mathtt{G}$ agree. There is a one-to-one correspondence between the closed subgroups of $\mathtt{G}$ and the closed subgroups of $\mathtt{G}_k$. Under this correspondence, the normal open (=closed and of finite index) subgroups of $\mathtt{G}$ correspond to the coalgebraic subgroups of $\mathtt{G}_k$.

\subsection{Formations of algebraic groups} \label{subsec: Formations}

To have a well-behaved notion of pro-$\CC$-groups, the class $\CC$ of algebraic groups needs to satisfy certain closure properties that we will now discuss.

We always assume that $\CC$ is a non-empty class of algebraic groups over $k$ such that any algebraic group isomorphic to an algebraic group in $\CC$ also belongs to $\CC$. A group that belongs to $\CC$ will also be called a $\CC$-group. We call $\CC$ a \emph{formation} if it satisfies the following two conditions.

\begin{enumerate}
    \item $\CC$ is closed under taking quotients, i.e., if $G\twoheadrightarrow H$ is an epimorphism of algebraic groups and $G\in\CC$, then $H\in\CC$.
    \item $\CC$ is closed under subdirect products, i.e., if $H$ is a subdirect product of $G_1,G_2\in\CC$, then $H\in\CC$. (Recall that a closed subgroup $H$ of $G_1\times G_2$ is a subdirect product of $G_1$ and $G_2$ if the projections $H\to G_i$ are epimorphisms for $i=1,2$.)
\end{enumerate}

\begin{rem} \label{rem:equivalent conditions for closed under subdirect products}
    Condition (ii) above has two equivalent reformulations:
    \begin{enumerate}
        \item[\rm{(ii)'}] If $G$ is a proalgebraic group and $N_1,N_2\unlhd G$ are normal closed subgroups such that $G/N_1$ and $G/N_2$ are in $\CC$, then also $G/(N_1\cap N_2)$ is in $\CC$.
        \item[\rm{(ii)''}] If $G$ is an algebraic group and $N_1,N_2\unlhd G$ are normal closed subgroups such that $N_1\cap N_2=1$ and $G/N_1,G/N_2\in\CC$, then $G\in\CC$.
    \end{enumerate}
To avoid trivialities we also assume that a formation contains a non-trivial algebraic group.
\end{rem}

If $\mathtt{C}$ is a formation of (abstract) finite groups (\cite[p. 20]{RibesZalesskii:ProfiniteGroups}), i.e., $\mathtt{C}$ is closed under taking quotients and subdirect products, we let $\CC=\mathtt{C}_k$ denote the class of all algebraic groups isomorphic to $\mathtt{G}_k$ for some $\mathtt{G}$ in $\mathtt{C}$. Then $\CC$ is a formation of algebraic groups.

As illustrated by the following example, most of the familiar classes of algebraic groups are formations.
\begin{ex} \label{ex:formations}
        Note that if $\CC$ is closed under taking quotients, direct products and closed subgroups, then $\CC$ is a formation. Therefore the following classes are formations:
    \begin{enumerate}
        \item The class of all algebraic groups,
        \item the class of all abelian algebraic groups,
        \item the class of all nilpotent algebraic groups,
        \item the class of all solvable algebraic groups,
        \item the class of all unipotent algebraic groups,
        \item the class of all \'{e}tale algebraic groups,
        \item the class of all diagonalizable algebraic groups,
        \item the class of all infinitesimal algebraic groups.
    \end{enumerate}

    Moreover, over a field of charcteristic zero,
    \begin{enumerate}  \setcounter{enumi}{7}
            \item the class of all semisimple algebraic groups and
        \item the class of all reductive linear algebraic groups
    \end{enumerate}
are formations. To be precise, here semisimple and reductive groups are required to be smooth but not necessarily connected. Quotients of semisimple (respectively reductive) algebraic groups are semisimple (respectively reductive). See e.g., \cite[Lemma 19.14]{Milne:AlgebraicGroupsTheTheoryOfGroupSchemesOfFiniteTypeOverAField}. Let us show that in characteristic zero the class of semisimple linear algebraic groups is closed under subdirect products. (The argument for reductive algebraic groups is similar.) Let $G$ be an algebraic group and $N_1,N_2\unlhd G$ closed normal subgroups with $N_1\cap N_2=1$ and $G/N_1,G/N_2$ semi-simple. Let $R(G_{\overline{k}})$ denote the radical of $G_{\overline{k}}$. Since $R(G_{\overline{k}})$ maps into  $R((G/N_1)_{\overline{k}})=1$, we see that $R(G_{\overline{k}})$ is contained in ${(N_1)}_{\overline{k}}$. Similarly, $R(G_{\overline{k}})\subseteq {(N_2)}_{\overline{k}}$. Thus $R(G_{\overline{k}})=1$. Since $G$ is automatically smooth in characteristic zero, it follows that $G$ is semisimple.

The following example shows that over a field of positive characteristic $p$, the classes of all smooth algebraic groups, all semisimple algebraic groups and all reductive algebraic groups are not formations: The subdirect product $H=\{(g_1,g_2)\in\Sl_2\times\Sl_2|\ \operatorname{Fr}_p(g_1)=\operatorname{Fr}_p(g_2)\}$ of $G_1=\Sl_2$ and $G_2=\Sl_2$ is not smooth. Here, for any $2\times 2$ matrix
$$\operatorname{Fr}_p\left(\begin{bmatrix} a & b \\ c & d\end{bmatrix}\right)=\begin{bmatrix} a^p & b^p \\ c^p & d^p\end{bmatrix}$$

The class of connected algebraic groups is also not a formation as shown by the following: the subdirect product $\{(g_1,g_2)\in \Gm^2|\
g_1^2=g_2^4\}$ of $G_1=G_2=\Gm$ is not connected.
\end{ex}

Let $\CC$ be a class of algebraic groups. A proalgebraic group $G$ is called a \emph{pro-$\CC$-group} if $G=\varprojlim G_i$ is a projective limit of $\CC$-groups $G_i$, where the transition maps $G_j\to G_i$ ($j\geq i$) are epimorphisms. The following lemma shows that, when $\CC$ is a formation, every pro-$\CC$-group can canonically be written as a projective limit of $\CC$-groups.

\begin{lemma}
    Let $\CC$ be a formation of algebraic groups and $G$ a pro-$\CC$-group. Then
    \begin{enumerate}
        \item every algebraic quotient of $G$ belongs to $\CC$,
        \item the canonical map $G\to \varprojlim G/N$ is an isomorphism, where the projective limit is taken over all coalgebraic subgroups $N$ of $G$ and
        \item every quotient of a pro-$\CC$-group is a pro-$\CC$-group.
\end{enumerate}
\end{lemma}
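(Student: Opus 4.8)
The plan is to prove the three assertions in sequence, using the formation axioms and the dictionary between coalgebraic subgroups and Hopf subalgebras recalled in the Preliminaries.

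For (i), write $G=\varprojlim_{i\in I} G_i$ with $G_i\in\CC$ and the transition maps epimorphisms, and let $G\twoheadrightarrow H$ be an algebraic quotient. Dually this means $k[H]\hookrightarrow k[G]=\varinjlim_i k[G_i]$ (the colimit over the directed system of Hopf subalgebras $k[G_i]\hookrightarrow k[G_j]$ for $j\geq i$). Since $k[H]$ is a finitely generated $k$-algebra, its finitely many generators already lie in some $k[G_i]$, so $k[H]\hookrightarrow k[G_i]$; because this inclusion is a map of Hopf algebras, $H$ is an algebraic quotient of $G_i$, hence $H\in\CC$ by closure under quotients (axiom (i)). This is the one genuine step and it is short; the finite generation of $k[H]$ together with the directedness of $I$ is exactly what makes it work.

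For (ii), the map $G\to\varprojlim_N G/N$ (limit over coalgebraic $N\unlhd G$) is always an epimorphism: dually, $k[G]=\bigcup_N k[G/N]$ since every finitely generated Hopf subalgebra of $k[G]$ is contained in some $k[G/N]$ — indeed, given finitely many elements of $k[G]$, the Hopf subalgebra they generate is finitely generated, hence corresponds to an algebraic quotient, i.e.\ to some coalgebraic $N$. It is also a monomorphism, since an element of $\bigcap_N \ker(k[G]\twoheadleftarrow\ ?)$... more cleanly: the canonical map is injective on coordinate rings because $k[G]=\varinjlim k[G/N]$ maps isomorphically onto $k[\varprojlim G/N]=\varinjlim k[G/N]$. (Here one should note the directedness of the system of coalgebraic subgroups under intersection, which uses $k[G/N_1]\cdot k[G/N_2]=k[G/(N_1\cap N_2)]$ from the Preliminaries, so that finite intersections of coalgebraic subgroups are coalgebraic — this does not even need the formation hypothesis.) Combined with (i), every $G/N$ in this system lies in $\CC$, so the isomorphism $G\xrightarrow{\sim}\varprojlim_N G/N$ exhibits $G$ canonically as a projective limit of $\CC$-groups.

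For (iii), let $G$ be a pro-$\CC$-group and $G\twoheadrightarrow Q$ a quotient, say $Q=G/M$ for a closed normal $M\unlhd G$. By (ii) applied to $Q$, we have $Q=\varprojlim_{\overline{N}} Q/\overline{N}$ over the coalgebraic subgroups $\overline{N}$ of $Q$; each $\overline{N}$ is of the form $N/M$ for a closed normal $N\unlhd G$ containing $M$, and $Q/\overline{N}\simeq G/N$ is an algebraic quotient of $G$, hence in $\CC$ by (i). Thus $Q$ is a projective limit of $\CC$-groups with epimorphic transition maps, i.e.\ a pro-$\CC$-group. The main obstacle, such as it is, is purely bookkeeping: making sure the systems one takes limits over are directed (which is where the Hopf-algebra product/intersection formulas enter) and that "algebraic quotient" is correctly transported between $G$ and $Q$ via the correspondence of normal closed subgroups; no new idea beyond part (i) is required, and part (i) itself rests only on finite generation of coordinate rings of algebraic groups plus axiom (i) of a formation.
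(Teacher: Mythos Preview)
Your proposal is correct and follows essentially the same route as the paper: for (i) you use finite generation of $k[H]$ to push it into some $k[G_i]$ and then invoke closure under quotients; for (ii) you use that $k[G]$ is the directed union of its finitely generated Hopf subalgebras (i.e., the $k[G/N]$); and for (iii) you reduce to (i) via (ii) applied to the quotient. The exposition in (ii) is a bit tangled (you start an argument, abandon it mid-sentence, then restart), but the mathematical content is right and matches the paper's proof.
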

\begin{proof}
    Let $G=\varprojlim G_i$ be a projective limit of $\CC$-groups $G_i$, with surjective transition maps $G_j\to G_i$. Then $k[G]$ is the directed union of the $k[G_i]$'s. Let $H$ be an algebraic quotient of $G$. Since $k[H]\subseteq k[G]$ is finitely generated, $k[H]$ is contained in some $k[G_i]$ and therefore $H$ is a quotient of $G_i$. As $\CC$ is closed under taking quotients, $H$ is a $\CC$-group.

    Let us prove (ii): The coalgebraic subgroups of $G$ form a directed set under $N\leq N'$ if $N\supseteq N'$ and the transition maps $G/N'\to G/N$ yield a projective system. Since $k[G]$ is the directed union of its finitely generated Hopf subalgebras (\cite[Section 3.3]{Waterhouse:IntroductiontoAffineGroupSchemes}), we see that $G= \varprojlim G/N$.

    For a quotient $H$ of $G$ we have $k[H]\subseteq k[G]$. The proalgebraic group $H$ is the projective limit of its algebraic quotients $H_i$ and $k[H_i]\subseteq k[H]\subseteq k[G]$. By (i) the $H_i$'s are $\CC$-groups and thus $H$ is a pro-$\CC$-group.
\end{proof}
The following lemma will be useful for constructing free pro-$\CC$-groups.

\begin{lemma} \label{lemma: maximal pro C quotient}
    Let $G$ be a proalgebraic group and $\CC$ a formation of algebraic groups. Then $G$ has a maximal pro-$\CC$-quotient, i.e., there exists a pro-$\CC$-group $\pi_\CC(G)$ together with an epimorphism $G\twoheadrightarrow \pi_\CC(G)$ such that for any morphism $\f\colon G\to H$ of proalgebraic groups with $\f(G)$ a pro-$\CC$-group, there exists a unique morphism $\pi_\CC(G)\to H$ such that
    $$
    \xymatrix{
G \ar@{->>}[rr] \ar[rd] & &     \pi_\CC(G) \ar@{..>}[ld] \\
 & H &
}
    $$
    commutes.
\end{lemma}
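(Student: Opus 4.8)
The plan is to construct $\pi_\CC(G)$ as a suitable quotient $G/N$ and verify the universal property. First I would identify, among all coalgebraic normal subgroups $N$ of $G$ such that $G/N$ belongs to $\CC$, those that should survive to $\pi_\CC(G)$; call this collection $\mathcal{N}$. The natural candidate for the ``kernel'' is $N_0 = \bigcap_{N \in \mathcal{N}} N$, and I would set $\pi_\CC(G) := G/N_0 = \varprojlim_{N \in \mathcal{N}} G/N$. The key closure property needed is that $\mathcal{N}$ is closed under finite intersections, so that it forms a directed set (under reverse inclusion) and the projective limit makes sense with epimorphic transition maps; this is exactly where condition (ii)' of Remark~\ref{rem:equivalent conditions for closed under subdirect products} enters: if $G/N_1, G/N_2 \in \CC$ then $G/(N_1 \cap N_2) \in \CC$, and $N_1 \cap N_2$ is again coalgebraic since $G/(N_1\cap N_2)$ embeds into the algebraic group $G/N_1 \times G/N_2$.

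Next I would check that $\pi_\CC(G)$ is genuinely a pro-$\CC$-group: it is a projective limit of the $\CC$-groups $G/N$ for $N \in \mathcal{N}$ with epimorphic transition maps, so this is immediate from the definition. The canonical map $G \to \pi_\CC(G)$ is an epimorphism since it is the quotient map $G \to G/N_0$.

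For the universal property, suppose $\psi\colon G \twoheadrightarrow H$ is an epimorphism onto a pro-$\CC$-group $H$. Write $H = \varprojlim_i H_i$ as a projective limit of $\CC$-groups with epimorphic transition maps (Lemma, part (ii)), so each composite $G \twoheadrightarrow H \twoheadrightarrow H_i$ is an epimorphism onto a $\CC$-group, hence has kernel $N_i \in \mathcal{N}$. Therefore $N_0 = \bigcap_{N\in\mathcal N} N \subseteq \bigcap_i N_i = \ker(\psi)$, and the isomorphism theorems for proalgebraic groups (cited in the Preliminaries) yield a unique morphism $\pi_\CC(G) = G/N_0 \to G/\ker(\psi) = H$ through which $\psi$ factors; the factorization through $G \twoheadrightarrow \pi_\CC(G)$ is what makes the triangle commute. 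Uniqueness of the induced map follows because $G \twoheadrightarrow \pi_\CC(G)$ is an epimorphism (so is an epimorphism in the categorical sense, being a quotient map), hence right-cancellable.

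The main obstacle I anticipate is purely set-theoretic/foundational: $\mathcal{N}$ is a proper class a priori only if one is careless, but in fact the coalgebraic subgroups of a fixed $G$ correspond to finitely generated Hopf subalgebras of $k[G]$ (via the correspondence recalled in the Preliminaries), so they form a set; thus $N_0$ is a well-defined intersection and $\varprojlim_{N \in \mathcal{N}} G/N$ is a legitimate projective limit over a directed set. The only real content is the directedness of $\mathcal{N}$, which is the formation axiom (ii)', and the reduction ``$H$ pro-$\CC$ implies every algebraic quotient of $H$ is in $\CC$,'' which is part (i) of the preceding Lemma. Everything else is a routine application of the isomorphism theorems and the fact that quotient maps are epimorphisms.
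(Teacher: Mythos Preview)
Your proposal is correct and follows essentially the same route as the paper: both take $\pi_\CC(G)=\varprojlim_{N\in\mathcal N}G/N$ over the set $\mathcal N$ of normal closed subgroups with $G/N\in\CC$, and both invoke condition (ii)$'$ of Remark~\ref{rem:equivalent conditions for closed under subdirect products} to get directedness. The only cosmetic difference is that the paper verifies the universal property on the Hopf-algebra side (showing $k[H]\subseteq k[\pi_\CC(G)]$ inside $k[G]$), while you phrase the dual statement in terms of kernels ($N_0\subseteq\ker\psi$); these are equivalent via the correspondence between normal closed subgroups and Hopf subalgebras recalled in the preliminaries.
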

\begin{proof}
Let $I$ denote the set of all normal closed subgroups $N$ of $G$ such that $G/N\in\CC$. We define a partial order on $I$ by $N_1\leq N_2$ if $N_1\supseteq N_2$. Because $\CC$ is closed under subdirect products (Remark~ \ref{rem:equivalent conditions for closed under subdirect products}) we know that $N_1\cap N_2\in \CC$ if $N_1,N_2\in\CC$. Thus $I$ is directed.
For $N_1\leq N_2$ we have a canonical epimorphism $G/N_2\twoheadrightarrow G/N_1$ and the collection of all these maps is a projective system. Let $\pi_\CC(G)$ denote the corresponding projective limit. Then $\pi_\CC(G)$ is a pro-$\CC$-group and $k[\pi_\CC(G)]\subseteq k[G]$ is the union of all finitely generated Hopf subalgebras $k[G_i]$ of $G$ such that $G_i$ is a $\CC$-group.

If $\f\colon G\to H$ is a morphism with $\f(G)$ a pro-$\CC$-group, then $k[\f(G)]\subseteq k[G]$ and since $k[\f(G)]$ is a union of $k[H_i]$'s with $H_i\in\CC$, we see that $k[\f(G)]\subseteq k[\pi_\CC(G)]$. So $\f$ factors uniquely through $G\twoheadrightarrow \pi_\CC(G)$.
\end{proof}

\begin{ex}
    If $G$ is an algebraic group and $\CC$ the class of all \'{e}tale algebraic groups, then $\pi_\CC(G)$ agrees with $\pi_0(G)=G/G^o$, the group of connected components of $G$ (\cite[Def. 2.38]{Milne:AlgebraicGroupsTheTheoryOfGroupSchemesOfFiniteTypeOverAField}).
\end{ex}

\subsection{Coalgebraic subgroups and convergence to one}


%
%

Let $G=\varprojlim G/N$ be a proalgebraic group, written as the projective limit of the canonical projective system. Then also $G(\overline{k})=\varprojlim (G/N)(\overline{k})$ and it is possible to topologize $G(\overline{k})$ with the limit topology, where the $(G/N)(\overline{k})$ are considered with the discrete topology (rather than the Zariski topology). Then $\{N(\overline{k})|\ N\leq G \text{ is coalgebraic} \}$ is a neighborhood basis at $1$ for $G(\kb)$. We have no need to actually use this topology on $G(\overline{k})$, but we will introduce some topological language which is reminiscent of this idea.

\begin{defi}
    Let $G$ be a proalgebraic group.  A set $\N$ of coalgebraic subgroups of $G$ is a \emph{neighborhood basis} at $1$ for $G$, if
    for every coalgebraic subgroup $N$ of $G$, there exists $N'\in \N$ with $N'\leq N$.
\end{defi}
Thus a set $\N$ of coalgebraic subgroups of $G$ is a neighborhood basis at $1$ for $G$ if and only if \begin{enumerate}
    \item $\N$ is downward directed, i.e., for $N_1,N_2\in \N$ there exists $N_3\in\N$ with $N_3\leq N_1, N_2$.
    \item $\bigcap_{N\in\N}N=1$.
\end{enumerate}

From a neighborhood basis $\N$ at $1$ for $G$ we obtain a projective system $G/N\twoheadrightarrow G/N'$ ($N,N'\in\N$ and $N\leq N'$) with $G=\varprojlim_{N\in\N} G/N$.
Conversely, if $G=\varprojlim_{i\in I}G_i$ is a projective limit of algebraic groups, then the set of all kernels of the morphisms $G\to G_i$ is a neighborhood basis at $1$ for $G$.

\begin{lemma} \label{lemma: co-algebrai and subgroup}
    Let $G$ be a proalgebraic group, $H\leq G$ a closed subgroup and $N\leq H$ a coalgebraic subgroup of $H$. Then there exists a coalgebraic subgroup $N'$ of $G$ such that $H\cap N'\leq N$.
\end{lemma}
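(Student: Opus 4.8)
The plan is to dualize and argue with coordinate rings. Write $\pi\colon k[G]\twoheadrightarrow k[H]$ for the restriction map; it is surjective precisely because $H$ is a closed subgroup of $G$. Since $N$ is coalgebraic in $H$, the Hopf subalgebra $k[H/N]\subseteq k[H]$ is a finitely generated $k$-algebra, say $k[H/N]=k[f_1,\dots,f_n]$. The idea is to lift the finitely many $f_i$ to $k[G]$, trap the lifts inside a \emph{finitely generated} Hopf subalgebra $k[G/N']$, and then verify that the coalgebraic subgroup $N'$ of $G$ obtained this way satisfies $H\cap N'\leq N$.

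First I would choose $\tilde f_i\in k[G]$ with $\pi(\tilde f_i)=f_i$. Recall that $k[G]$ is the directed union of its finitely generated Hopf subalgebras (\cite[Section 3.3]{Waterhouse:IntroductiontoAffineGroupSchemes}); equivalently $k[G]=\bigcup_M k[G/M]$, where $M$ ranges over the coalgebraic subgroups of $G$, and this union is directed because $k[G/M_1]\cdot k[G/M_2]=k[G/(M_1\cap M_2)]$ with $M_1\cap M_2$ again coalgebraic. Hence all the $\tilde f_i$ lie in a single $k[G/N']$ for some coalgebraic subgroup $N'$ of $G$, and consequently $\pi(k[G/N'])\supseteq k[f_1,\dots,f_n]=k[H/N]$.

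It remains to show $H\cap N'\leq N$. Consider the composite $\psi\colon H\to G/N'$ of the inclusion $H\hookrightarrow G$ with the canonical map $G\twoheadrightarrow G/N'$. Its kernel is $H\cap N'$ (which is normal in $H$ since $N'$ is normal in $G$), so $\psi$ factors as $H\twoheadrightarrow \psi(H)\hookrightarrow G/N'$ with $\psi(H)\simeq H/(H\cap N')$; and since the defining ideal of the scheme-theoretic image $\psi(H)$ is the kernel of the dual map $\psi^*$, we obtain $k[H/(H\cap N')]=\psi^*(k[G/N'])=\pi(k[G/N'])$. Combining this with the previous step gives $k[H/N]\subseteq k[H/(H\cap N')]$. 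Now the correspondence between normal closed subgroups of $H$ and Hopf subalgebras of $k[H]$ is inclusion-reversing — explicitly, $k[H/N]\cap k[H/(H\cap N')]=k[H/(N\cdot (H\cap N'))]$, which equals $k[H/N]$ by the inclusion just obtained, forcing $N\cdot(H\cap N')=N$ — and hence $H\cap N'\leq N$, as required.

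I do not expect a genuine obstacle: once set up this way the lemma is a routine exercise in dualization. The only point that needs a little care is the identification $\pi(k[G/N'])=k[H/(H\cap N')]$, which rests on the description of scheme-theoretic images via kernels of dual maps together with the fact (recalled in the preliminaries) that the image of a morphism of proalgebraic groups is a closed subgroup isomorphic to a quotient. The finiteness of the generating set $f_1,\dots,f_n$ is exactly what permits all the lifts to be absorbed into a single algebraic quotient $G/N'$; without it one could only bound each $f_i$ by a possibly different coalgebraic subgroup of $G$.
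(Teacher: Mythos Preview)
Your argument is correct and is essentially the paper's own proof, just written out in more detail: lift finitely many generators of $k[H/N]$ through the surjection $k[G]\twoheadrightarrow k[H]$, absorb them into a single finitely generated Hopf subalgebra $k[G/N']$, identify $\pi(k[G/N'])$ with $k[H/(H\cap N')]$, and conclude from $k[H/N]\subseteq k[H/(H\cap N')]$ that $H\cap N'\leq N$. The paper states exactly these steps, only more tersely.
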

\begin{proof}
    Since $k[G]$ is the directed union of its finitely generated Hopf subalgebras and $k[H/N]\subseteq k[H]$ is finitely generated, it follows that there exists a finitely generated Hopf subalgebra of $k[G]$ whose image in $k[H]$ contains $k[H/N]$. This Hopf subalgebra is necessarily of the form $k[G/N']$ for some coalgebraic subgroup $N'$ of $G$. The image of $k[G/N']$ in $k[H]$ is $k[H/H\cap N']$. Since $k[H/N]\subseteq k[H/H\cap N']$, it follows that $H\cap N'\subseteq N$.
\end{proof}
From Lemma \ref{lemma: co-algebrai and subgroup} we immediately obtain:
\begin{cor} \label{cor: neighborhood basis for subgroup}
    Let $G$ be a proalgebraic group and $H\leq G$ a closed subgroup. If
    $\N$ is a neighborhood basis at $1$ for $G$, then $\N(H)=\{N\cap H|\ N\in \N\}$ is a neighborhood basis at $1$ for $H$.
\end{cor}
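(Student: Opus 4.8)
The plan is to check the two things required of a neighborhood basis at $1$ for $H$: that each member of $\N(H)$ is genuinely a coalgebraic subgroup of $H$, and that every coalgebraic subgroup of $H$ contains some member of $\N(H)$.

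For the first point I would fix $N\in\N$ and observe that the canonical morphism $H\to G/N$ has kernel $H\cap N$, hence induces a closed immersion $H/(H\cap N)\hookrightarrow G/N$. Since $G/N$ is of finite type and a closed subgroup scheme of an algebraic group is again of finite type, $H/(H\cap N)$ is algebraic, i.e. $N\cap H$ is coalgebraic in $H$. So $\N(H)$ really is a set of coalgebraic subgroups of $H$.

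For the second point — which is where Lemma \ref{lemma: co-algebrai and subgroup} does the actual work — let $M$ be an arbitrary coalgebraic subgroup of $H$. By that lemma there is a coalgebraic subgroup $N'$ of $G$ with $H\cap N'\leq M$. Since $\N$ is a neighborhood basis at $1$ for $G$, there is $N\in\N$ with $N\leq N'$, and then $N\cap H\leq N'\cap H\leq M$ with $N\cap H\in\N(H)$, which is exactly what is needed.

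There is essentially no obstacle: the content is entirely contained in Lemma \ref{lemma: co-algebrai and subgroup}, and the only extra care required is the bookkeeping remark that $N\cap H$ is coalgebraic in $H$. (Alternatively, one could verify the two equivalent conditions directly: $\N(H)$ is downward directed because $(N_1\cap H)\cap(N_2\cap H)=(N_1\cap N_2)\cap H$ and $\N$ is downward directed, and $\bigcap_{N\in\N}(N\cap H)=\big(\bigcap_{N\in\N}N\big)\cap H=1$; but invoking Lemma \ref{lemma: co-algebrai and subgroup} as above is the cleaner route and is presumably what is intended by ``we immediately obtain''.)
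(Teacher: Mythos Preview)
Your proof is correct and matches the paper's intended approach: the paper offers no proof at all beyond the phrase ``we immediately obtain'' from Lemma~\ref{lemma: co-algebrai and subgroup}, and you have simply spelled out exactly what that sentence means. The only (minor) remark is that in your parenthetical alternative, $N_1\cap N_2$ need not itself lie in $\N$; the correct phrasing is to pick $N_3\in\N$ with $N_3\leq N_1,N_2$ and use $N_3\cap H$, but this is cosmetic and does not affect your main argument.
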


\begin{cor} \label{cor: intersection 1}
    Let $G$ be a proalgebraic group and $H\leq G$ a closed subgroup that is algebraic. Then there exists a coalgebraic subgroup $N'$ of $G$ with $N'\cap H=1$.
\end{cor}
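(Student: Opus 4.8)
The plan is to deduce this directly from Lemma~\ref{lemma: co-algebrai and subgroup}, which is precisely the tool built for this purpose. The key observation is that since $H$ is itself an algebraic group, the trivial subgroup $1\leq H$ is a coalgebraic subgroup of $H$: indeed $H/1=H$ is of finite type. Thus $N=1$ is a legitimate input to Lemma~\ref{lemma: co-algebrai and subgroup}.

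First I would invoke Lemma~\ref{lemma: co-algebrai and subgroup} with the closed subgroup $H\leq G$ and the coalgebraic subgroup $N=1$ of $H$. This produces a coalgebraic subgroup $N'$ of $G$ with $H\cap N'\leq N=1$, hence $H\cap N'=1$, which is exactly the assertion. So the proof is a one-line application once the setup is in place.

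I expect essentially no obstacle here; the only point worth stating explicitly is the (trivial but necessary) remark that algebraicity of $H$ is exactly what makes $1$ coalgebraic in $H$, so that the hypothesis of Lemma~\ref{lemma: co-algebrai and subgroup} is met. If one preferred a self-contained argument instead of citing the lemma, one could rerun its proof in this special case: $k[H]=k[H/1]$ is finitely generated, so it is contained in the image of some finitely generated Hopf subalgebra $k[G/N']$ of $k[G]$; that image equals $k[H/(H\cap N')]$, and the inclusion $k[H]\subseteq k[H/(H\cap N')]$ forces $H\cap N'=1$. But citing the lemma is cleaner, so that is the route I would take.
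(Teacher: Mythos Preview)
Your proposal is correct and follows exactly the same approach as the paper: apply Lemma~\ref{lemma: co-algebrai and subgroup} with $N=1$, noting that $1$ is coalgebraic in $H$ because $H$ is algebraic. The paper's proof is literally the one-line observation you describe.
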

\begin{proof}
    As $H$ is algebraic we can chose $N=1$ in Lemma \ref{lemma: co-algebrai and subgroup}.
\end{proof}

\begin{cor} \label{cor: subgroup is pro C}
    Let $\CC$ be a formation of algebraic groups and $G$ a pro-$\CC$-group. If $\CC$ is closed under taking closed subgroups, then every closed subgroup of $G$ is a pro-$\CC$-group.
\end{cor}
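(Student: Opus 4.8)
The plan is to combine Corollary~\ref{cor: neighborhood basis for subgroup} with the already-established part~(i) of the first lemma of Subsection~\ref{subsec: Formations}. Let $G$ be a pro-$\CC$-group and $H\leq G$ a closed subgroup; I want to write $H$ as a projective limit of $\CC$-groups with epimorphic transition maps. By Corollary~\ref{cor: neighborhood basis for subgroup}, if $\N$ is a neighborhood basis at $1$ for $G$ (for instance the set of all coalgebraic subgroups of $G$), then $\N(H)=\{N\cap H\mid N\in\N\}$ is a neighborhood basis at $1$ for $H$, so $H=\varprojlim_{N\in\N}H/(H\cap N)$ with the canonical epimorphic transition maps. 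Thus it suffices to show each $H/(H\cap N)$ is a $\CC$-group.

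For this, fix $N\in\N$. The inclusion $H\hookrightarrow G$ induces an injection of Hopf algebras, hence a monomorphism, and composing with $G\twoheadrightarrow G/N$ gives a morphism $H\to G/N$ whose image is exactly $H/(H\cap N)$ (this is the standard isomorphism theorem, valid for proalgebraic groups as recalled in the preliminaries). In particular $H/(H\cap N)$ is a closed subgroup of the algebraic group $G/N$, so it is itself algebraic. Now $G/N$ is an algebraic quotient of the pro-$\CC$-group $G$, hence $G/N\in\CC$ by part~(i) of the lemma; and since $\CC$ is assumed closed under taking closed subgroups, $H/(H\cap N)\in\CC$.

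Putting these together: $H=\varprojlim_{N\in\N}H/(H\cap N)$ is a projective limit of $\CC$-groups with epimorphic transition maps, so $H$ is a pro-$\CC$-group, as claimed. I do not expect any serious obstacle here — the corollary is essentially a formal consequence of Corollary~\ref{cor: neighborhood basis for subgroup}. The only point requiring a little care is the bookkeeping that $H/(H\cap N)$ genuinely is a closed \emph{subgroup} of $G/N$ (not merely a quotient of $H$), so that the hypothesis ``closed under closed subgroups'' applies; this follows from the isomorphism theorems and the fact that $H\to G/N$ factors as $H\twoheadrightarrow H/(H\cap N)\hookrightarrow G/N$.
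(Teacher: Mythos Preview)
Your proof is correct and follows essentially the same approach as the paper: use Corollary~\ref{cor: neighborhood basis for subgroup} to write $H=\varprojlim H/(H\cap N)$, observe that $H/(H\cap N)$ embeds as a closed subgroup of the $\CC$-group $G/N$, and conclude by the closed-subgroup hypothesis. The only difference is that you spell out explicitly why $G/N\in\CC$ and why $H/(H\cap N)\hookrightarrow G/N$, which the paper leaves implicit.
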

\begin{proof}
    Let $H$ be a closed subgroup of $G$ and let $\N$ be a neighborhood basis at $1$ for $G$. According to Corollary \ref{cor: neighborhood basis for subgroup} the set $\{H\cap N|\ N\in\N\}$ is a neighborhood basis at $1$ for $H$. Thus $H\simeq\varprojlim H/(H\cap N)$. But $H/(H\cap N)$ injects into $G/N$ and is therefore a $\CC$-group.
\end{proof}

The free proalgebraic groups that we want to consider are free on a set $X$ such that the map from $X$ to the proalgebraic group has a certain property which is explained in the following definition.

\begin{defi}
    Let $X$ be a set, $G$ a proalgebraic group and $R$ a $k$-algebra. A map $\varphi:X\to G(R)$ \emph{converges to $1$} if for every coalgebraic subgroup $N$ of $G$ almost all elements of $X$ map into $N(R)$, i.e., $X\smallsetminus\varphi^{-1}(N(R))$ is finite for every coalgebraic subgroup $N$ of $G$.
\end{defi}
For $X\subseteq G(R)$, we say that \emph{$X$ converges to $1$} if the inclusion map converges to $1$. If $G$ is algebraic, it follows that $\varphi$ converges to $1$ if and only if almost all elements of $X$ map to $1$.

\begin{lemma} \label{lemma: morphism and convergence to 1}
    Let $\f\colon G\to H$ be a morphism of proalgebraic groups, $X$ a set, $R$ a $k$-algebra and $\varphi\colon X\to G(R)$ a map converging to $1$. Then $\f_R\circ\varphi\colon X\to H(R)$ converges to $1$.
\end{lemma}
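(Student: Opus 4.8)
The plan is to reduce the statement to the hypothesis on $\varphi$ by pulling coalgebraic subgroups of $H$ back along $\f$. Fix a coalgebraic subgroup $M$ of $H$; we must show that $X\smallsetminus(\f_R\circ\varphi)^{-1}(M(R))$ is finite. To this end I would first form $N=\f^{-1}(M)$, the scheme-theoretic preimage, i.e. the fibre product $G\times_H M$. Since fibre products in the category of proalgebraic groups are computed pointwise, $N$ is a closed normal subgroup of $G$ with $N(R)=\{g\in G(R)\ :\ \f_R(g)\in M(R)\}$; in particular $\f_R\big(N(R)\big)\subseteq M(R)$ and $\varphi^{-1}(N(R))\subseteq(\f_R\circ\varphi)^{-1}(M(R))$.

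The key point is that $N$ is coalgebraic. Indeed, $N$ is the kernel of the composite $G\xrightarrow{\f} H\to H/M$, so by the isomorphism theorems for proalgebraic groups (recalled in the preliminaries) $G/N$ is isomorphic to a closed subgroup of $H/M$. As $M$ is coalgebraic, $H/M$ is algebraic, i.e. of finite type over $k$, and a closed subgroup scheme of an algebraic group is again algebraic; hence $G/N$ is algebraic and $N$ is coalgebraic. With this in hand, the hypothesis that $\varphi$ converges to $1$ gives that $X\smallsetminus\varphi^{-1}(N(R))$ is finite, and the inclusion $\varphi^{-1}(N(R))\subseteq(\f_R\circ\varphi)^{-1}(M(R))$ noted above then forces $X\smallsetminus(\f_R\circ\varphi)^{-1}(M(R))$ to be finite as well. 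Since $M$ was an arbitrary coalgebraic subgroup of $H$, this shows $\f_R\circ\varphi$ converges to $1$.

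The only step requiring any care — the "main obstacle", such as it is — is the coalgebraicity of $\f^{-1}(M)$; the rest is purely formal. If one prefers to bypass the isomorphism theorems, one can argue at the level of coordinate rings in the spirit of Lemma~\ref{lemma: co-algebrai and subgroup}: $k[H/M]\subseteq k[H]$ is finitely generated, so its image under $\f^*\colon k[H]\to k[G]$ lies in some finitely generated Hopf subalgebra $k[G/N']$ with $N'$ a coalgebraic subgroup of $G$; this yields a factorization $G\to G/N'\to H/M$ compatible with $\f$, whence $\f_R(N'(R))\subseteq M(R)$, and one concludes exactly as before using that $\varphi$ maps almost all of $X$ into $N'(R)$.
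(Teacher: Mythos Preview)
Your argument is correct and follows exactly the same approach as the paper: the paper's proof consists of the single sentence ``If $N$ is a coalgebraic subgroup of $H$, then $\f^{-1}(N)$ is a coalgebraic subgroup of $G$,'' which is precisely the key step you isolate and justify. Your write-up simply unpacks that sentence (and the immediate conclusion from it) in more detail.
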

\begin{proof}
    If $N$ is a coalgebraic subgroup of $H$, then $\f^{-1}(N)$ is a coalgebraic subgroup of $G$.
\end{proof}

\begin{lemma} \label{lemma: converges to one}
    Let $X$ be a set, $G$ a proalgebraic group, $H\leq G$ a closed subgroup and $R$ a $k$-algebra. If the map $\varphi\colon X\to G(R)$ converges to $1$ and $\varphi(X)\subseteq H(R)$, then $\varphi\colon X\to H(R)$ converges to $1$.
\end{lemma}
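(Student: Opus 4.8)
The plan is to reduce the convergence condition for $H$ to the one we already have for $G$ by translating coalgebraic subgroups of $H$ into coalgebraic subgroups of $G$ via Lemma~\ref{lemma: co-algebrai and subgroup}. Concretely, fix an arbitrary coalgebraic subgroup $M$ of $H$; the goal is to show that $X\smallsetminus\varphi^{-1}(M(R))$ is finite, where $\varphi$ is now regarded as a map to $H(R)$.

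First I would invoke Lemma~\ref{lemma: co-algebrai and subgroup} with the closed subgroup $H\leq G$ and the coalgebraic subgroup $M\leq H$ to obtain a coalgebraic subgroup $N'$ of $G$ with $H\cap N'\leq M$. Next, since $\varphi\colon X\to G(R)$ converges to $1$, the set $X\smallsetminus\varphi^{-1}(N'(R))$ is finite. The remaining point is to check that every $x$ with $\varphi(x)\in N'(R)$ in fact satisfies $\varphi(x)\in M(R)$: by hypothesis $\varphi(x)\in H(R)$, and since the scheme-theoretic intersection of closed subgroups represents the intersection of their functors of points, $\varphi(x)\in H(R)\cap N'(R)=(H\cap N')(R)\subseteq M(R)$. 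Hence $X\smallsetminus\varphi^{-1}(M(R))\subseteq X\smallsetminus\varphi^{-1}(N'(R))$ is finite, and as $M$ was arbitrary, $\varphi\colon X\to H(R)$ converges to $1$.

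There is no serious obstacle here; the only mild subtlety is the identification $(H\cap N')(R)=H(R)\cap N'(R)$ inside $G(R)$, which is immediate from the definition of the intersection of closed subschemes. (One could alternatively phrase the argument using Corollary~\ref{cor: neighborhood basis for subgroup}: $\{H\cap N\mid N\in\N\}$ is a neighborhood basis at $1$ for $H$ for any neighborhood basis $\N$ of $G$, and convergence of $\varphi$ need only be tested against a neighborhood basis; but the direct argument above is shortest.)
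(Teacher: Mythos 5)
Your proof is correct and follows essentially the same route as the paper: both reduce to Lemma~\ref{lemma: co-algebrai and subgroup} to produce a coalgebraic $N'\leq G$ with $H\cap N'\leq M$, and then use convergence in $G$ together with the identification $(H\cap N')(R)=H(R)\cap N'(R)$ to conclude. The paper merely phrases the last step via the kernel of $H(R)\to (G/N')(R)$, which is the same observation.
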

\begin{proof}
    Let $N$ be a coalgebraic subgroup of $H$. By Lemma \ref{lemma: co-algebrai and subgroup} there exists a coalgebraic subgroup $N'$ of $G$ with $H\cap N'\leq N$. The kernel of $H(R)\to (G/N')(R)$ is $(H\cap N')(R)$ and almost all elements of $X$ map to $1$ in $(G/N')(R)$. Therefore almost all elements of $X$ map into $(H\cap N')(R)\subseteq N(R)$.
\end{proof}

\subsection{Generating proalgebraic groups}

Let $G$ be a proalgebraic group, $R$ a $k$-algebra and $X\subseteq G(R)$. As the intersection of closed subgroups $H$ of $G$ with $X\subseteq H(R)$ also has this property, there exists a smallest closed subgroup $\langle X\rangle$ of $G$ such that $X\subseteq\langle X\rangle(R)$. We call it the \emph{closed subgroup generated by $X$}. If $\langle X\rangle=G$, we also say that \emph{$X$ generates $G$} or that $X$ is a generating set of $G$.

\begin{lemma} \label{lemma: generates reduced subgroup}
    Let $G$ be a proalgebraic group, $R$ a $k$-algebra and $X\subseteq G(R)$. If $k$ is perfect and $R$ reduced, then $\langle X\rangle$ is reduced (and therefore geometrically reduced).
\end{lemma}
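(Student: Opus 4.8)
The plan is to show that the reduced closed subscheme $\langle X\rangle_{\red}$ of $\langle X\rangle$ is again a closed subgroup of $G$ and still contains $X$; minimality of $\langle X\rangle$ then forces $\langle X\rangle=\langle X\rangle_{\red}$, which is exactly what we want. Write $H=\langle X\rangle$ for brevity.

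The heart of the matter is to check that $H_{\red}$ is a closed subgroup scheme of $H$ (equivalently, of $G$). Since $k$ is perfect, the reduced $k$-algebra $k[H]_{\red}=k[H_{\red}]$ is geometrically reduced, and therefore so is $k[H_{\red}]\otimes_k k[H_{\red}]$; in particular $H_{\red}\times H_{\red}$ is a reduced affine scheme. Consequently the composite $H_{\red}\times H_{\red}\hookrightarrow H\times H\to H$, whose second arrow is the multiplication of $H$, has reduced source and hence factors through the closed immersion $H_{\red}\hookrightarrow H$. Applying the same reasoning to the inversion $H\to H$ and to the identity section $\spec(k)\to H$, one concludes that $H_{\red}$ is a closed subgroup scheme of $H$, hence a closed subgroup of $G$.

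Now, since $R$ is reduced, every $k$-algebra homomorphism $k[H]\to R$ annihilates the nilradical of $k[H]$ and thus factors through $k[H]\twoheadrightarrow k[H_{\red}]$; that is, $H(R)=H_{\red}(R)$. In particular $X\subseteq H_{\red}(R)$, so the defining minimality property of $\langle X\rangle$ gives $H=\langle X\rangle\leq H_{\red}\leq H$, i.e.\ $H=H_{\red}$ is reduced. Finally, over the perfect field $k$ a reduced $k$-algebra is automatically geometrically reduced, so $k[H]\otimes_k\kb$ is reduced, which means $\langle X\rangle$ is absolutely reduced.

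The only step carrying real content is the verification that $H_{\red}$ is a subgroup scheme, and this is precisely where perfectness of $k$ is used: over an imperfect field $k[H_{\red}]\otimes_k k[H_{\red}]$ need not be reduced, so $H_{\red}\times H_{\red}$ may differ from $(H\times H)_{\red}$ and $H_{\red}$ can fail to be closed under multiplication. The remaining steps are purely formal.
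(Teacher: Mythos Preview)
Your proof is correct and follows essentially the same approach as the paper: show that $H_{\red}$ is a closed subgroup (using perfectness of $k$) containing $X$ in its $R$-points (using reducedness of $R$), then invoke minimality of $\langle X\rangle$. The only cosmetic difference is that the paper cites \cite[Cor.~1.39]{Milne:AlgebraicGroupsTheTheoryOfGroupSchemesOfFiniteTypeOverAField} and SGA3 for the fact that $H_{\red}$ is a subgroup, whereas you spell out the standard argument via geometric reducedness; and the paper verifies $X\subseteq H_{\red}(R)$ by checking that the radical of the defining ideal lies in each $\ker(x)$, while you equivalently note $H(R)=H_{\red}(R)$ directly.
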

\begin{proof}
    Since $k$ is perfect, the underlying reduced subscheme $G_{\operatorname{red}}$ of a proalgebraic group $G$ is a closed subgroup of $G$. (See \cite[Cor. 1.39]{Milne:AlgebraicGroupsTheTheoryOfGroupSchemesOfFiniteTypeOverAField} for the algebraic case or \cite[Expos\'{e} $\operatorname{VI_{A}}$, Section 0.2]{Grothendieck:SGA3_1} for the general case.)
    So  $\langle X\rangle_{\operatorname{red}}$ is a closed subgroup of $G$. It therefore suffices to show that $X\subseteq \langle X\rangle_{\operatorname{red}}(R)$.

    If $H$ is a closed subgroup of $G$, an $x\in X\subseteq G(R)=\Hom(k[G],R)$ lies in $H(R)$ if and only if the defining ideal $\I(H)\subseteq k[G]$ is contained in $\ker(x)$. By definition $X\subseteq \langle X\rangle(R)$, so $\I(\langle X\rangle)\subseteq\ker(x)$ for all $x\in X$. Because $R$ is reduced, $\ker(x)$ is a radical ideal and therefore
    $\I(\langle X\rangle_{\operatorname{red}})=\sqrt{\I(\langle X\rangle)}\subseteq\ker(x)$ for all $x\in X$. Thus $X\subseteq \langle X\rangle_{\operatorname{red}}(R)$.
\end{proof}

If $R$ is not reduced, $\langle X\rangle$ need not be reduced, even when $k$ is perfect. For example, if $k$ is a field of characteristic $2$, $G=\Ga$, $R=k[\epsilon]=k[t]/(t^2)$ and $X=\{\epsilon\}$, then $\langle X\rangle=\alpha_2=\{g\in\Ga|\ g^2=0\}$.
The following example shows that $\langle X\rangle$ need not be reduced if $k$ is not perfect, even when $R=\overline{k}$.

\begin{ex} \label{ex: Zariski closure non reduced}
    Let $k$ be a field of characteristic $2$ and $a\in k$ such that $a^{\frac{1}{2}}\in\overline{k}\smallsetminus k$. Let $G=\Ga$ and $X=\{0,a^\frac{1}{2}\}\leq G(\overline{k})$. We will show that  $\langle X\rangle=H=\{g\in\Ga|\ g^2(g^2-a)=0\}$.  Clearly $H$ is a closed subgroup of $G$ with $X\subseteq H(\overline{k})$. There are only four subschemes of $H$, their defining equations are \begin{itemize}
        \item $x=0$,
        \item $x^2=0$,
        \item $x^2-a=0$ or
        \item $x(x^2-a)=0$.
    \end{itemize}
    These are either not subgroups or do not contain $X$. Thus $\langle X\rangle=H$.

\end{ex}

\begin{lemma} \label{lemma: morphisms and generation}
    Let $\f\colon G\to H$ be a morphism of proalgebraic groups, $R$ a $k$-algebra and $X\subseteq G(R)$. Then $\f(\langle X\rangle)=\langle\f_R(X)\rangle$. In particular, if $\langle X\rangle=G$ and $\f$ is an epimorphism, then $\langle\f_R(X)\rangle=H$.
\end{lemma}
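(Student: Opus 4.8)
The plan is to prove the two inclusions $\langle\f_R(X)\rangle\subseteq\f(\langle X\rangle)$ and $\f(\langle X\rangle)\subseteq\langle\f_R(X)\rangle$ separately, using only the minimality properties defining the $\langle\cdot\rangle$-operation together with the two elementary compatibilities: that the $R$-points of a scheme-theoretic image contain the image of the $R$-points, and that the $R$-points of a scheme-theoretic preimage of a closed subgroup are exactly the preimage of the $R$-points. Note first that $\f$ restricts to a morphism $\langle X\rangle\to H$ of proalgebraic groups, so $\f(\langle X\rangle)$ is a closed subgroup of $H$ (as recalled in the preliminaries).

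For $\langle\f_R(X)\rangle\subseteq\f(\langle X\rangle)$: since $X\subseteq\langle X\rangle(R)$, we have $\f_R(X)\subseteq\f_R(\langle X\rangle(R))$. For any $g\in\langle X\rangle(R)=\Hom(k[\langle X\rangle],R)$, the point $\f_R(g)$ is the composite $k[H]\to k[\langle X\rangle]\xrightarrow{g}R$; because the dual map $k[H]\to k[\langle X\rangle]$ factors through its image $k[\f(\langle X\rangle)]$, we get $\f_R(g)\in\f(\langle X\rangle)(R)$. Hence $\f_R(X)\subseteq\f(\langle X\rangle)(R)$, and minimality of $\langle\f_R(X)\rangle$ among closed subgroups of $H$ whose $R$-points contain $\f_R(X)$ gives the inclusion.

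For $\f(\langle X\rangle)\subseteq\langle\f_R(X)\rangle$: write $H'=\langle\f_R(X)\rangle$ and form the scheme-theoretic preimage $\f^{-1}(H')=G\times_H H'$, which is a closed subgroup of $G$ (being the base change of the closed immersion $H'\hookrightarrow H$ along $\f$, exactly as used for $\f^{-1}(N)$ in Lemma \ref{lemma: morphism and convergence to 1}). On $R$-points, $(G\times_H H')(R)=G(R)\times_{H(R)}H'(R)=\{g\in G(R)\mid\f_R(g)\in H'(R)\}$, and every $x\in X$ satisfies $\f_R(x)\in\f_R(X)\subseteq H'(R)$, so $X\subseteq\f^{-1}(H')(R)$. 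Minimality of $\langle X\rangle$ forces $\langle X\rangle\subseteq\f^{-1}(H')$. Since the composite $\f^{-1}(H')\hookrightarrow G\xrightarrow{\f}H$ factors through $H'\hookrightarrow H$ by the universal property of the fiber product, the scheme-theoretic image satisfies $\f(\langle X\rangle)\subseteq\f(\f^{-1}(H'))\subseteq H'=\langle\f_R(X)\rangle$.

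Combining the two inclusions gives $\f(\langle X\rangle)=\langle\f_R(X)\rangle$. For the last assertion, if $\langle X\rangle=G$ and $\f$ is an epimorphism, then $\langle\f_R(X)\rangle=\f(\langle X\rangle)=\f(G)=H$, the final equality being the definition of an epimorphism. I do not anticipate a genuine obstacle; the only points needing explicit care (rather than being glossed over) are the two $R$-point compatibilities above and the fact that $\f^{-1}(H')$ is a closed subgroup of $G$.
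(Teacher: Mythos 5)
Your proof is correct and follows essentially the same route as the paper's: the inclusion $\langle\f_R(X)\rangle\subseteq\f(\langle X\rangle)$ from $\f_R(X)\subseteq\f(\langle X\rangle)(R)$ and minimality, and the reverse inclusion by pulling back $\langle\f_R(X)\rangle$ to the closed subgroup $\f^{-1}(\langle\f_R(X)\rangle)$ whose $R$-points contain $X$. You simply spell out the $R$-point compatibilities that the paper leaves implicit.
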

\begin{proof}
    As $\f_R(X)\subseteq \f(\langle X\rangle)(R)$, clearly, $\langle\f_R(X)\rangle \subseteq \f(\langle X\rangle)$. On the other hand, $\f^{-1}(\langle\f_R(X)\rangle)$ is a closed subgroup of $G$ whose $R$-points contain $X$. Thus $\langle X\rangle\subseteq  \f^{-1}(\langle\f_R(X)\rangle)$ and therefore $\f(\langle X\rangle)\subseteq\langle\f_R(X)\rangle$.
    %
    %
\end{proof}

%
%

\subsection{Existence of free pro-$\CC$-groups}

We are now prepared to introduce free proalgebraic groups.

\begin{theo} \label{theo: free pro-alg group}
    Let $X$ be a set, $\CC$ a formation and $R$ a $k$-algebra. There exists a pro-$\CC$-group $\Gamma=\Gamma_{R/k}^{\CC}(X)$ and a map $\iota\colon X\to \Gamma(R)$ such that
    \begin{enumerate}
        \item   $\iota$ converges to $1$ and $\langle \iota(X)\rangle$ is a pro-$\CC$-group,
        \item the pair $(\Gamma, \iota)$ is universal, i.e., for every pro-$\CC$-group $G$ and every map $\varphi\colon X\to G(R)$ converging to $1$ with $\langle \varphi(X)\rangle$ a pro-$\CC$-group, there exists a unique morphism $\psi\colon \Gamma\to G$ of proalgebraic groups such that
        \begin{equation} \label{eqn: universal prop}
        \xymatrix{
            X \ar^{\iota}[rr] \ar_{\varphi}[rd] &  & \Gamma(R) \ar^{\psi_R}[ld] \\
            & G(R) &
        }
        \end{equation}
        commutes.
    \end{enumerate}
Moreover, we have $\langle\iota(X)\rangle=\Gamma$.
\end{theo}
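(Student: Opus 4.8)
The plan is to construct $\Gamma_{R/k}^\CC(X)$ via a Tannakian/representation-theoretic bookkeeping argument, as the excerpt's abstract promises, but the cleanest route is actually to build the universal object directly as a projective limit indexed by a suitably chosen set of "small" quotients, using Lemma \ref{lemma: maximal pro C quotient} to cut down to pro-$\CC$-groups. First I would fix a cardinal bound: if $G$ is any proalgebraic group with a map $\varphi\colon X\to G(R)$ converging to $1$ and $\langle\varphi(X)\rangle = G$, then $G$ is generated (as a proalgebraic group) by a set of $R$-points of cardinality $\leq |X|$, so $k[G]$ is generated as a $k$-algebra by a set of cardinality at most $\max(|X|,\aleph_0,|R|)$ together with the Hopf operations; hence up to isomorphism there is only a \emph{set} $\mathcal{S}$ of such pairs $(G,\varphi)$. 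I would then form the product $P = \prod_{(G,\varphi)\in\mathcal{S}} G$, let $\varphi_0\colon X\to P(R)$ be the map whose $(G,\varphi)$-component is $\varphi$, observe that $\varphi_0$ converges to $1$ (a coalgebraic subgroup of $P$ contains $\ker(P\to G_1\times\cdots\times G_n)$ for finitely many factors, and each $\varphi$ converges to $1$, so use Lemma \ref{lemma: morphism and convergence to 1} and a finite intersection), and set $\Gamma = \langle\varphi_0(X)\rangle \leq P$ with $\iota = \varphi_0$. By Lemma \ref{lemma: converges to one}, $\iota$ converges to $1$ in $\Gamma$, and $\langle\iota(X)\rangle = \Gamma$ by construction, giving (i).

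For the universal property (ii), given $(G,\varphi)$ with $\varphi$ converging to $1$ and $\langle\varphi(X)\rangle = G$, I would first replace $(G,\varphi)$ by an isomorphic copy lying in $\mathcal{S}$ — this requires the cardinality bound above to be genuinely uniform, which is the one slightly delicate point. Then projection $P\twoheadrightarrow G$ onto that factor restricts to a morphism $\psi\colon \Gamma\to G$; since $\langle\iota(X)\rangle = \Gamma$ and $\psi_R\circ\iota = \varphi$, Lemma \ref{lemma: morphisms and generation} gives $\psi(\Gamma) = \langle\varphi(X)\rangle = G$, so $\psi$ is an epimorphism, and in particular the diagram (\ref{eqn: universal prop}) commutes. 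Uniqueness of $\psi$ is immediate: two morphisms $\Gamma\to G$ agreeing on $\iota(X)$ agree on $\langle\iota(X)\rangle = \Gamma$, because the equalizer of two morphisms of proalgebraic groups is a closed subgroup containing $\iota(X)$, hence all of $\Gamma$. Note that so far I have not used that $\CC$ is a formation at all — this produces the "absolutely free" proalgebraic group; to get $\Gamma_{R/k}^\CC(X)$ one composes with $\pi_\CC$ from Lemma \ref{lemma: maximal pro C quotient}, i.e. replaces $\mathcal S$ by its sub-class of pairs with $G$ a pro-$\CC$-group, and checks that $\pi_\CC$ of the absolutely free group, equipped with the composed map, has the $\CC$-restricted universal property (here one does use that quotients of pro-$\CC$-groups are pro-$\CC$-groups, Lemma, part (iii), so that $\Gamma$ itself is pro-$\CC$).

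The main obstacle I anticipate is \textbf{the set-theoretic bound}: making precise that the isomorphism classes of pairs $(G,\varphi)$ with $\varphi$ converging to $1$ and $\varphi(X)$ generating $G$ form a set, and with a bound on $|k[G]|$ independent of the pair. The generation hypothesis $\langle\varphi(X)\rangle = G$ is what makes this work — without it $G$ could be arbitrarily large — and one has to argue that the smallest closed subgroup containing a set of $R$-points of size $\leq|X|$ corresponds to a Hopf subalgebra of $k[P']$ (for a large but fixed ambient $P'$) generated by boundedly many elements, or alternatively argue directly on Hopf algebras: $k[G]$ embeds in $R^{\,|X|\text{-tuples}}$-valued functions via the generators, bounding $|k[G]| \leq 2^{\max(|X|,|R|,\aleph_0)}$. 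A clean way to sidestep choosing a specific ambient object is to note that any such $k[G]$ is a quotient Hopf algebra of the "universal" construction on the free Hopf-algebra-with-$|X|$-distinguished-group-like-ish elements, but the cardinality count is the essential content regardless of packaging. Everything else — convergence to $1$ passing through products and subgroups, epimorphism and uniqueness via the generation hypothesis — is routine given the lemmas already proved in the excerpt.
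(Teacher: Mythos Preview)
Your approach is correct but takes a genuinely different route from the paper. The paper builds $\Gamma$ (for $\CC$ the class of all algebraic groups) via Tannakian reconstruction: it shows that the category of \emph{cofinite representations} of the abstract free group $F_X$ over $R$---finite-dimensional $k$-vector spaces $V$ together with a homomorphism $F_X \to \Gl(V)(R)$ sending almost all of $X$ to $1$---is a neutral Tannakian category with the forgetful fibre functor $\omega$, and then sets $\Gamma = \underline{\Aut}^\otimes(\omega)$. The universal property follows from the equivalence between morphisms $\Gamma \to G$ and tensor functors $\Rep(G) \to \Rep_{cf}(F_X)$, and the argument that $\langle\iota(X)\rangle=\Gamma$ proceeds exactly as you suggest (apply the universal property to the inclusion $\langle\iota(X)\rangle\hookrightarrow\Gamma$). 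Both proofs handle general $\CC$ identically, by passing to $\pi_\CC(\Gamma)$. The Tannakian route yields a canonical model and a direct link to representation theory, while your product-of-all-targets construction is more elementary and avoids that machinery entirely, at the cost of pushing the work into the set-theoretic bound (which the Tannakian approach hides inside the reconstruction theorem).

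On that bound: your two specific suggestions need repair. The map $k[G]\to R^{F_X}$ need not be injective---take $k$ of characteristic zero, $G=\Ga$, $R=k[\epsilon]/(\epsilon^2)$, $X=\{x\}$, $\varphi(x)=\epsilon$; then $\langle\epsilon\rangle=\Ga$ since the only closed subgroups are $0$ and $\Ga$, yet $t^2\in k[t]$ vanishes on every $n\epsilon$. And invoking a ``large but fixed ambient $P'$'' is circular. A clean argument that does work: for each coalgebraic $N\unlhd G$ the pair $(G/N,\ \pi_N\circ\varphi)$ is an algebraic group equipped with a map from $X$ that is trivial off a finite subset and generates; distinct $N$ give non-isomorphic such pairs (an isomorphism $\theta$ would force $\theta\circ\pi_{N_1}$ and $\pi_{N_2}$ to agree on $\varphi(X)$, hence on $\langle\varphi(X)\rangle=G$ by the equalizer argument, hence $N_1=N_2$), and such pairs visibly form a set of cardinality bounded in terms of $|X|,|k|,|R|$. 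This bounds the number of coalgebraic subgroups of $G$ and therefore $|k[G]|$, giving the solution set you need.
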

\begin{proof}
    We first show that $\langle \iota(X)\rangle =\Gamma$ for any pair $(\Gamma,\iota)$ satisfying (i) and (ii). Let $G=\langle \iota(X)\rangle\leq \Gamma$ and let $\varphi\colon X\to G(R)$ be such that $\varphi(x)=\iota(x)$ for all $x\in X$. Since $\varphi$ converges to $1$ by Lemma~\ref{lemma: converges to one}, it follows from (ii) that there exists a morphism $\psi\colon \Gamma\to G$ such that (\ref{eqn: universal prop}) commutes.
    Then $\f\colon \Gamma\xrightarrow{\psi} G\to \Gamma$ is such that
        $$
    \xymatrix{
        X \ar^{\iota}[rr] \ar_{\iota}[rd] &  & \Gamma(R) \ar^{\f_R}[ld] \\
        & \Gamma(R) &
    }
    $$
    commutes. As also $\id\colon\Gamma\to \Gamma$ has this property, it follows from the uniqueness in (ii) that $\f=\id$. Therefore $G=\Gamma$ as desired.

    We will next construct $\Gamma_{R/k}^{\CC}(X)$ for the formation $\CC$ of all algebraic groups.
    Let $F_X$ be the free (abstract) group on $X$. A \emph{cofinite representation} of $F_X$ (over $R$) is a pair $(V,\f)$, where $V$ is a finite dimensional $k$-vector space and $\f\colon F_X\to \Gl(V\otimes_k R)=\Gl(V)(R)$ is a morphism of groups such that almost all $x\in X$ map to the identity of $\Gl(V)(R)$.
    We may write $V$ instead of $(V,\f)$ and $g(a)$ instead of $\f(g)(a)$ for $g\in F_X$ and $a\in V\otimes_k R$.

    A morphism $(V,\f)\to (V',\f')$ of cofinite representations is a $k$-linear map $\eta\colon V\to V'$ such that we have
    $(\eta\otimes R)(\f(g)(a))=\f'(g)((\eta\otimes R)(a))$ for every $g\in F_X$ and $a\in V\otimes_k R$. Clearly the cofinite representations of $F_X$ are a category. We will show that they indeed are a neutral  Tannakian category (\cite{DeligneMilne:TannakianCategories, Deligne:categoriestannakien}) in a natural way.

    For two cofinite representations $(V,\f)$ and $(V',\f')$, the tensor product $V\otimes_k V'$ becomes a cofinite representation by $$g(a\otimes b)=g(a)\otimes g(b)$$ for $g\in F_X$ and $a\otimes b\in (V\otimes_k R)\otimes_R (V'\otimes_k R)=(V\otimes_kV')\otimes_k R$.
    For three cofinite representations $V,V',V''$, the map $(V\otimes_k V')\otimes_k V''\to V\otimes_k(V'\otimes_k V'')$ is a morphism of cofinite representations. Therefore the usual associativity constraint on $k$-vector spaces induces an associativity constraint on the category of cofinite representations. Similarly, we obtain a compatible commutativity constraint. Thus the cofinite representations are a symmetric monoidal category. The identity object $\mathds{1}$ is given by the one-dimensional $k$-vector space $k$, with $\f\colon F_X\to \Gl(k\otimes_k R)=\Gl_1(R)$ the trivial map.

    For a cofinite representation $(V,\f)$ the dual vector space $V^\vee=\Hom_k(V,k)$ is a cofinite representation via $g(f)(a)=f(g^{-1}(a))$ for $g\in F_X$, $f\in V^\vee\otimes_k R=\Hom_R(V\otimes_k R,R)$ and $a\in V\otimes_k R$. The maps $V\otimes_k V^\vee\to \mathds{1}$ and $\mathds{1}\to V^\vee\otimes_k V$ are morphisms of cofinite representations. Therefore our category is rigid. (Cf. \cite[2.1.2]{Deligne:categoriestannakien}.)

    If $\eta\colon V\to V'$ is a morphism of cofinite representations, the kernel $\ker(\eta)$ is naturally a cofinite representation because  $\ker(\eta)\otimes_k R=\ker(\eta\otimes R)$.
     If $V$ is a cofinite representation and $W\leq V$ a subspace such that $W\otimes_k R\subseteq V\otimes_k R$ is stable under $F_X$, then there is an induced action on $(W\otimes_k R)/(V\otimes_k R)=(W/V)\otimes_k R$ and $W/V$ is a cofinite representation. So we see that if $\eta\colon V\to V'$ is a morphism of cofinite representations, then $\eta(V)\otimes_k R$ is stable under $F_X$ and $V'\to V'/\eta(V)$ is a cokernel of $\eta$. So the cofinite representations of $F_X$ are an abelian category.

     As $k$ can be identified with the endomorphisms of $\mathds{1}$, we conclude that the cofinite representations of $F_X$ are a tensor category (in the sense of \cite[2.1]{Deligne:categoriestannakien}).

     To conclude the description of our neutral tannakian category we need a fibre functor. We take it to be the forgetful functor $\omega$, that associates the underlying vector space $V$ to a cofinite representations $(V,\f)$.

     Let $\Gamma=\underline{\Aut}^\otimes(\omega)$ be the fundamental group of this tannakian category. An element of $\Gamma(R)$ is, by definition, a family $(\lambda_V)$, indexed by all cofinite representations, where $\lambda_V\colon V\otimes_kR\to V\otimes_k R$ is an $R$-linear automorphism such that
     $$\xymatrix{
        V\otimes_k R \ar_{\eta\otimes R}[d] \ar^{\lambda_V}[r] & V\otimes_k R \ar^{\eta\otimes R}[d] \\
        V'\otimes_k R \ar^{\lambda_{V'}}[r] & V'\otimes_k R
     }
      $$
      commutes for every morphism $\eta\colon V\to V'$ of cofinite representations, $\lambda_{V\otimes_k V'}=\lambda_{V}\otimes\lambda_{V'}$ for all cofinite representations $V$, $V'$ and $\lambda_{\mathds{1}}$ is the identity map (on $R$).

     We have a group homomorphism $F_X\to\Gamma(R)$, given by sending $g\in F_X$ to
    $(\f(g))_{(V,\f)}$, where $(V,\f)$ ranges over all cofinite representations of $F_X$.
    We note that $\Gamma$ is the projective limit of the algebraic groups $\underline{\Aut}^\otimes(\omega|_{\langle (V,\f)\rangle_\otimes})$, where $\langle (V,\f)\rangle_\otimes$ denotes the tensor category generated by the cofinite representation $(V,\f)$, and that $\underline{\Aut}^\otimes(\omega|_{\langle (V,\f)\rangle_{\otimes}})$ naturally embeds into $\Gl(V)$ as a closed subgroup. The category of cofinite representations of $F_X$ is equivalent (via $\omega$) to the category of (finite dimensional) representations of $\Gamma$. Explicitly, a cofinite representation $(V,\f)$ corresponds to the canonical projection
    $$\Gamma\to \underline{\Aut}^\otimes(\omega|_{\langle (V,\f)\rangle_\otimes})\subseteq\Gl(V).$$
    In particular, the universal element $\id\colon\Gamma\to\Gamma$ corresponds to the group homomorphism $F_X\to\Gamma(R)$.

    Let $\iota\colon X\to \Gamma(R)$ be the map induced by restricting $F_X\to\Gamma(R)$ to $X$. We have to show that $\iota$ converges to $1$. So let $N\unlhd \Gamma$ be a coalgebraic subgroup. Since the kernel of $\Gamma(R)\to(\Gamma/N)(R)$ is $N(R)$, it suffices to show that almost all elements of $X$ map to $1$ under $F_X\to\Gamma(R)\to (\Gamma/N)(R)$.

 Let $(V,\f)$ be a cofinite representation of $F_X$ that induces a faithful representation of $\Gamma/N$. Then the diagram
    $$
    \xymatrix{
        F_X \ar^-\f[r]  \ar[d] & \Gl(V)(R) \\
        \Gamma(R) \ar[r] & (\Gamma/N)(R) \ar[u]
    }
    $$
    commutes.
    Because $(\Gamma/N)(R)\to \Gl(V)(R)$ is injective and $\f$ is cofinite, it follows that almost all elements of $X$ map into $N(R)$. Thus $\iota$ converges to $1$.

    We next establish the universal property:
 We claim that for every proalgebraic group $G$ and every map $\varphi\colon X\to G(R)$ converging to $1$, there exists a unique morphism $\psi\colon\Gamma\to G$ such that (\ref{eqn: universal prop}) commutes.

    Note that $\varphi$ induces a morphism of groups $F_X\to G(R)$.
    Because $\varphi$ converges to $1$, every (finite dimensional, $k$-linear) representation of $G$ induces a cofinite representation of $F_X$.
We thus have a tensor functor $F\colon\Rep(G)\to \Rep_{cf}(F_X)=\Rep(\Gamma)$, compatible with the fibre functors, where
    $\Rep(G)$ denotes the tannakian category of representations of $G$ and $\Rep_{cf}(F_X)$ denotes the category of cofinite representations of $F_X$. By \cite[Cor. 2.9]{DeligneMilne:TannakianCategories}, the tensor functor $F$ is induced by a unique morphism $\psi\colon\Gamma\to G$ of proalgebraic groups. By construction, the diagram
    $$
    \xymatrix{
        \Gamma(R) \ar^{\psi_R}[dr] & \\
        F_X \ar[r] \ar[u] & G(R) \\
        X   \ar_{\varphi}[ru] \ar[u]
    }
    $$
    commutes. Moreover, if $\psi'\colon\Gamma\to G$ is another morphism of proalgebraic groups such that
    $$
    \xymatrix{
        \Gamma(R) \ar^{\psi'_R}[dr] & \\
        F_X \ar[r] \ar[u] & G(R) \\
        X   \ar_{\varphi}[ru] \ar[u]
    }
    $$
    commutes, then $\psi$ and $\psi'$ induce the same tensor functor $\Rep(G)\to \Rep_{cf}(F_X)$ and therefore $\psi=\psi'$ (loc. cit.). 
     So the existence of $\Gamma=\Gamma^\CC_{R/k}(X)$, is established when $\CC$ is the formation of all algebraic groups.
%
%
%

    Let us now assume that $\CC$ is an arbitrary formation and let $(\Gamma,\iota)$ be as above. We know from Lemma \ref{lemma: maximal pro C quotient} that $\Gamma$ has a maximal pro-$\CC$-quotient $\Gamma_{R/k}^\CC(X)=\pi_\CC(\Gamma)$. Let $\iota_{\CC}$ denote the composition
$$\iota_{\CC}\colon X\xrightarrow{\iota} \Gamma(R)\to \pi_{\CC}(\Gamma)(R).$$
By Lemma \ref{lemma: morphism and convergence to 1} the map $\iota_\CC$ converges to $1$. Since $\langle\iota(X)\rangle =\Gamma$ (as established in the first paragraph) it follows, using Lemma \ref{lemma: morphisms and generation}, that $\langle\iota_\CC(X)\rangle=\pi_{\CC}(\Gamma)$. In particular, $\langle \iota_{\CC}(X)\rangle$ is a pro-$\CC$-group.

    To verify the universal property of $(\pi_{\CC}(\Gamma),\iota_{\CC})$, let $G$ be a pro-$\CC$-group and $\varphi\colon X\to G(R)$ a map converging to $1$ such that $\langle \varphi(X)\rangle$ is a pro-$\CC$-group. By the universal property of $(\Gamma, \iota)$, there exists a unique morphism $\f\colon\Gamma\to G$ such that $\varphi=\f_R\circ\iota$. Using Lemma \ref{lemma: morphisms and generation}, we see that
$$\f(\Gamma)=\f(\langle \iota(X)\rangle)=\langle\f_R(\iota(X))\rangle=\langle\varphi(X)\rangle.$$
Thus $\f(\Gamma)$ is a pro-$\CC$-group and by the universal property of $(\pi_\CC(\Gamma),\pi)$, there exists a unique morphism $\psi\colon\pi_{\CC}(\Gamma)\to G$ such that $\f=\psi\circ\pi$.
%
%
The commutativity of
    $$
    \xymatrix{
    X \ar^-{\iota}[r] \ar_\varphi[rd] & \Gamma(R) \ar^-{\f_R}[d] \ar^-{\pi_R}[r] & \pi_{\CC}(\Gamma)(R) \ar^-{\psi_R}[ld] \\
    & G(R) &
    }
    $$
 shows that $\varphi=\psi_R\circ\iota_{\CC}$ as required. If $\psi'\colon \pi_{\CC}(\Gamma)\to G$ is another morphism such that
$$\varphi=\psi'_R\circ\iota_{\CC}=(\psi'\circ\pi)_R\circ\iota,$$
it follows from the universal property of $(\Gamma,\iota)$ that $\psi'\circ\pi=\f$. But then the universal property of $(\pi_{\CC}(\Gamma),\pi)$ yields $\psi'=\psi$.
\end{proof}
%
Clearly, the pair $(\Gamma^\CC_{R/k}(X),\iota) $ is uniquely determined (up to isomorphism) by the universal property.

\begin{defi} \label{defi: free proalgebraic group}
    The proalgebraic group $\Gamma^\CC_{R/k}(X)$ from Theorem \ref{theo: free pro-alg group} is called the \emph{free pro-$\CC$-group on $X/R$}. The proalgebraic group
    $\Gamma^\CC(X)=\Gamma^\CC_{\overline{k}/k}(X)$ is called the \emph{free pro-$\CC$-group on $X$} (over $k$).
\end{defi}

    If $\CC$ is a formation of algebraic groups that is closed under taking subgroups, then the conditions $\langle\iota(X)\rangle$ is a pro-$\CC$-group in (i) and $\langle\varphi(X)\rangle$ is a pro-$\CC$-group in (ii) of Theorem \ref{theo: free pro-alg group} can be dropped (Corollary \ref{cor: subgroup is pro C}).

%
%

\begin{rem} \label{rem: can reduce to algebraic}
    For a pro-$\CC$-group to be the free pro-$\CC$-group on $X/R$ it suffices that it satisfies the mapping property of Theorem \ref{theo: free pro-alg group} for any $\CC$-group $G$.
\end{rem}
\begin{proof}
    Let $\Gamma$ be a pro-$\CC$-group and $\iota\colon X\to \Gamma(R)$ be a map converging to $1$ with $\langle\iota(X)\rangle$ a pro-$\CC$-group such that for every $\CC$-group $G$ with a map $\varphi\colon X\to G(R)$ converging to $1$ with $\langle \varphi(X)\rangle$ a $\CC$-group, there exists a unique morphism $\psi\colon \Gamma\to G$ with $\varphi=\psi_R\circ\iota$.

    Let $G$ be a pro-$\CC$-group and $\varphi\colon X\to G(R)$ a map converging to $1$ with $\langle \varphi(X)\rangle$ a pro-$\CC$-group. We may write $G=\varprojlim_{i\in I} G_i$ as a projective limit of $\CC$-groups $G_i$ with surjective transition maps $G_i\twoheadrightarrow G_j$ ($i\geq j$). Then the induced maps $\varphi_i\colon X\xrightarrow{\iota} G(R)\xrightarrow{(\pi_i)_R}G_i(R)$ also converge to $1$ (Lemma~\ref{lemma: morphism and convergence to 1}) and by Lemma \ref{lemma: morphisms and generation} $\langle \varphi_i(X)\rangle=\pi_i(\langle\varphi(X)\rangle)$  is a $\CC$-group. Thus there exist unique morphisms $\psi_i\colon\Gamma\to G_i$ with $\varphi=(\psi_i)_R\circ \iota$. By uniqueness the $\psi_i$ must be compatible and therefore define a morphism $\psi\colon \Gamma\to G$. This is the unique $\psi$ such that $\varphi=\psi_R\circ\iota$.
\end{proof}

\begin{rem} \label{rem: free group is reduced}
If $k$ is perfect, $\Gamma^\CC(X)$ is geometrically reduced by Lemma \ref{lemma: generates reduced subgroup}. If $k$ is not perfect, $\Gamma^\CC(X)$ need not be reduced. For example, if $\CC$ is the formation of all algebraic groups over a field of characteristic $2$, the non-reduced group $H$ from Example \ref{ex: Zariski closure non reduced} would be a quotient of $\Gamma^\CC(X)$ and therefore  $\Gamma^\CC(X)$ cannot be reduced.
\end{rem}

We next consider some examples of free proalgebraic groups. The first three examples explain how our notion of free proalgebraic groups encompasses previously known constructions.

\begin{ex}
    Let $\mathtt{C}$ be a formation of finite groups and let $\CC=\mathtt{C}_k$ be the corresponding formation of algebraic groups over $k$ (Section \ref{subsec: Formations}). Let $X$ be a set and let $\mathtt{\Gamma}^\mathtt{C}(X)$ be the free pro-$\mathtt{C}$-group on $X$ (\cite[Def. 17.4.1]{FriedJarden:FieldArithmetic}). Then $\Gamma^\CC(X)=(\mathtt{\Gamma}^\mathtt{C}(X))_k$. Since $G(k)=G(L)$ for any field extension $L$ of $k$ and $\CC$-group $G$, one actually has $\Gamma^\CC_{L/k}(X)=(\mathtt{\Gamma}^\mathtt{C}(X))_k$ for any field extension $L$ of $k$.
    \end{ex}

\begin{ex}
 Let $k$ be an algebraically closed field, $X$ a finite set and $\CC$ the formation of all algebraic groups. Then $\Gamma^\CC(X)$ is the proalgebraic completion (\cite[Def. 4]{BassLubotzkyMagidMozes:TheProalgebraicCompletionOfRigidGroups})  of the (abstract) free group on $X$. If $k$ has characteristic zero and $X$ has only one element, one can show that $\Gamma^\CC(X)=\Ga\times D(k^\times)$. See \cite{Mathoverflow:FreeAbelianRankOneInAffineGroupSchemes} or \cite[Example 1]{BassLubotzkyMagidMozes:TheProalgebraicCompletionOfRigidGroups}.
\end{ex}

\begin{ex} \label{ex: free prounipotent}
    Free prounipotent groups over an algebraically closed field of characteristic zero were introduced in \cite{LubotzkyMagid:CohomologyOfUnipotentAndPropunipotentGroups} and further studied in \cite{LubotzkyMagid:FreeProunipotentGroups}.
    The construction and definition of free prounipotent groups given in \cite{LubotzkyMagid:CohomologyOfUnipotentAndPropunipotentGroups} does not quite agree with ours, however in \cite[Prop. 2.2]{LubotzkyMagid:CohomologyOfUnipotentAndPropunipotentGroups} the following mapping property of $U(X)$, the free prounipotent group on a set $X$ (in the sense of \cite{LubotzkyMagid:CohomologyOfUnipotentAndPropunipotentGroups}) is established: for any unipotent algebraic group $G$, there is a bijection between the set of morphisms $U(X)\to G$ with the set of subsets $\{g_x|\ x\in X\}$ of $G(k)$ where $g_x=1$ for almost all $x\in X$, such that a morphism $\psi\colon U(X)\to G$ corresponds to $\{\psi(x)|\ x\in X \}$. It is clear from the construction given in \cite[Section 2]{LubotzkyMagid:CohomologyOfUnipotentAndPropunipotentGroups} that the map $X\to U(X)$ converges to $1$. Thus the mapping property from \cite[Prop. 2.2]{LubotzkyMagid:CohomologyOfUnipotentAndPropunipotentGroups} is equivalent to the mapping property of Theorem \ref{theo: free pro-alg group} (Remark~\ref{rem: can reduce to algebraic}). In other words, over an algebraically closed field of characteristic zero, $U(X)=\Gamma^\CC(X)$, where $\CC$ is the class of all unipotent algebraic groups.

    If $G$ is a unipotent algebraic group over an algebraically closed field $k$ of characteristic zero and if $x\in G(k)\smallsetminus 1$, then $\langle x\rangle\simeq \Ga$. (Note that $\langle x\rangle$ is abelian and unipotent and therefore isomorphic to $\Ga^n$ for some $n$, but then necessarily $n=1$.)  It follows that $\Gamma^\CC(X)=\Ga$ if $X$ is a one element set.

    If $k$ is not algebraically closed, $\Gamma^\CC(X)$ is more complicated. Indeed, if $k=\mathbb{Q}$ and $x=(\sqrt{2},\sqrt{3})\in \Ga^2(\kb)$, then $\langle x\rangle=\Ga^2$. Thus $\Ga^2$ is a quotient of $\Gamma^\CC(X)$.
\end{ex}

%
%

\begin{ex} \label{ex: free abelian unipotent}
    Let $k$ be a field of characteristic zero and $\CC$ the formation of all abelian unipotent algebraic groups. Thus an algebraic group is in $\CC$ if and only if it is isomorphic to $\Ga^n$ for some $n$ (\cite[Cor. 16.15]{Milne:AlgebraicGroupsTheTheoryOfGroupSchemesOfFiniteTypeOverAField}). For any set $X$ and $k$-algebra $R$ we have $\Gamma^\CC_{R/k}(X)=\prod_Y\Ga$, where $Y$ is a set of cardinality $|X|\cdot\dim_k(R)$.
\end{ex}
\begin{proof}
    Let $B$ be a $k$-basis of $R$ and set $Y=X\times B$. We define $\iota\colon X\to\prod_Y\Ga(R)$ by
    $$\iota(x)_y=\begin{cases}
    b \text{ if } y=(x,b), \\
    0 \text{ otherwise. }
    \end{cases}$$
    The kernels of the projections $\prod_Y\Ga\to\prod_{Y'}\Ga$, where $Y'$ is a finite subset of $Y$ are a neighborhood basis at $1$ for $\prod_Y\Ga$. Every such kernel contains almost all elements of $\iota(X)$. Thus $\iota$ converges to one.

    Let $G$ be a pro-$\CC$-group and $\varphi\colon X\to G(R)$ a map converging to $1$. Then $G$ is isomorphic to $\prod_Z\Ga$ for some set $Z$.
    For $x\in X$ and $z\in Z$ we can write $\varphi(x)_z=\sum_{b\in B}\lambda(b,x,z)b\in R$ for uniquely determined  $\lambda(b,x,z)\in k$. We define $\psi\colon \prod_Y\Ga\to\prod_Z\Ga$ by
    $$\psi((s_y)_{y\in Y})=\left(\sum_{x\in X}\sum_{b\in B} \lambda(b,x,z)s_{(x,b)}\right)_{z\in Z}\in \prod_Z \Ga(S)$$ for any $k$-algebra $S$ and $(s_y)_{y\in Y}\in\prod_Y \Ga(S)$. The above sum is finite because $\varphi$ converges to $1$. Then $\psi$ is the unique morphism such that
    $$
    \xymatrix{
        X \ar^{\iota}[rr] \ar_{\varphi}[rd] &  & \prod_Y\Ga(R)  \ar^{\psi_R}[ld] \\
        & \prod_Z\Ga(R) &
    }
    $$
    commutes.
\end{proof}

\begin{ex} \label{ex: free prodiagonalizable group}
    Let $\CC$ be the class of all diagonalizable algebraic groups, $R$ a $k$-algebra and $X$ a set. Then $\Gamma^\CC_{R/k}(X)=D(\oplus_X R^\times)$. Here $\oplus_X R^\times$ is the direct sum of $|X|$ copies of the abelian group $R^\times$.
\end{ex}
\begin{proof}
Let us define $\iota\colon X\to D(\oplus_X R^\times)(R)=\Hom(\oplus_X R^\times,R^\times)$ as the map that sends $x\in X$ to the projection $\pi_x\colon \oplus_X R^\times\to R^\times$ that picks out the $x$-component. We need to check that $\iota$ converges to $1$.

In general, if $M$ is an abelian group, the inclusion map $N=D(M')\hookrightarrow D(M)$ of a coalgebraic subgroup into $D(M)$, corresponds to a quotient map $M\twoheadrightarrow M'$ of abelian groups with a finitely generated kernel. So $\iota$ converges to $1$, because for every finitely generated subgroup $A$ of $\oplus_X R^\times$, $A\subseteq\ker(\pi_x)$ for almost all $x\in X$.


Let $G=D(M)$ be a pro-$\CC$-group and $\varphi\colon X\to G(R)$ a map converging to $1$. We have to find $\psi\colon D(\oplus_X R^\times)\to G$ such that
    \begin{equation} \label{eqn: universal prop diagonalizable}
\xymatrix{
    X \ar^{\iota}[rr] \ar_{\varphi}[rd] &  & D(\oplus_X R^\times)(R) \ar^{\psi_R}[ld] \\
    & G(R) &
}
\end{equation}
commutes. This corresponds to finding a morphism $\widetilde{\psi}\colon M\to \oplus_X R^\times$  of abelian groups such that

    \begin{equation} \label{eqn: ex abelian groups}
\xymatrix{
    M \ar^{\widetilde{\psi}}[rr] \ar_{\varphi(x)}[rd] &  & \oplus_X R^\times \ar^{\pi_x}[ld] \\
    & R^\times &
}
\end{equation}
commutes for all $x\in X$. Clearly, the unique solution to this problem is given by $\widetilde{\psi}(m)=\oplus_{x\in X}\varphi(x)(m)$ for $m\in M$. Note that this sum is finite because  $\varphi$ converges to $1$.
\end{proof}

We would like to warn the reader that the formation of free pro-$\CC$-groups is not compatible with extension of the base field in the sense that, for example, the free pro-diagonalizable group over $k'$ on $X$ can be obtained from the free pro-diagonalizable group over $k$ by base extension from $k$ to $k'$.

We conclude this section with two lemmas that will be needed later. The following lemma is fundamental for showing that certain embedding problems for free pro-$\CC$-groups are solvable.

\begin{lemma} \label{lemma: finite generation}
    Let $k$ be field of characteristic zero and let $G$ be an algebraic group. Then there exists a finite subset $X$ of $G(\kb)$ such that $G=\langle X\rangle$.
\end{lemma}
\begin{proof}
    A proof, under the additional assumption that $k$ is algebraically closed, can be found in \cite[Lemma 5.13]{SingerPut:differential}.  But if $X$ generates $G_{\kb}$, then $X$ also generates $G$.
\end{proof}

The above lemma fails in positive characteristic, even for smooth groups, for example $\Ga$ is not finitely generated in positive characteristic.

\begin{lemma} \label{lemma: free group not algebraic}
    Let $k$ be a field of characteristic zero, $X$ an infinite set and $\CC$ a formation. Then $\Gamma^\CC(X)$ is not an algebraic group.
\end{lemma}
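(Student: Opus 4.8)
The goal is to show that $\Gamma=\Gamma^\CC(X)$ is not algebraic when $X$ is infinite and $k$ has characteristic zero. Equivalently, by Proposition~\ref{prop: rank} and the definition of rank, it suffices to produce infinitely many distinct coalgebraic subgroups of $\Gamma$, or more concretely to exhibit, for every $n$, an algebraic $\CC$-group quotient of $\Gamma$ that requires at least $n$ of the generators $\iota(X)$ to hit it nontrivially. The natural strategy is to fix a single non-trivial $\CC$-group $G_0$ (which exists since a formation contains a non-trivial group) and, for each finite subset $\{x_1,\dots,x_n\}\subseteq X$, build an epimorphism $\Gamma^\CC(X)\twoheadrightarrow G_0^n$ (or onto some subdirect power, which is still a $\CC$-group since $\CC$ is closed under subdirect products) in which the $i$-th generator maps onto the $i$-th factor. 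The kernels of these epimorphisms, as $n$ grows, cannot stabilize, so no neighborhood basis at $1$ is finite, hence $\rank(\Gamma)$ is infinite and $\Gamma$ is not algebraic.

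\textbf{Key steps.} First, pick a non-trivial $\CC$-group $G$; by Lemma~\ref{lemma: finite generation} (here the characteristic zero hypothesis enters) there is a finite set $S=\{g_1,\dots,g_m\}\subseteq G(\kb)$ with $\langle S\rangle=G$; we may shrink and assume $G$ is monogenic is too strong, but we can at least find a single $g\in G(\kb)\setminus\{1\}$ and replace $G$ by the non-trivial $\CC$-group $\langle g\rangle$, so WLOG $G=\langle g\rangle$ is topologically generated by one element $g\neq 1$. Second, given $n$ and distinct $x_1,\dots,x_n\in X$, define $\varphi\colon X\to G^n(\kb)$ by sending $x_i$ to the tuple with $g$ in slot $i$ and $1$ elsewhere, and every other element of $X$ to $1$. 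This $\varphi$ converges to $1$ (all but finitely many elements go to the identity, and $G^n$ is algebraic), so by the universal property of $\Gamma^\CC(X)$ (Theorem~\ref{theo: free pro-alg group}, together with Remark~\ref{rem: can reduce to algebraic}) it extends to a morphism $\psi_n\colon\Gamma^\CC(X)\to G^n$; its image $H_n:=\langle\varphi(X)\rangle$ is a closed subgroup of $G^n$, which by closure of $\CC$ under subdirect products (applied to the projections $H_n\to G$, which are surjective since each $g_i$-slot is hit) is a $\CC$-group — in fact one checks $H_n=G^n$ because $H_n$ surjects onto each factor and contains the ``standard generators.'' Third, observe that $N_n:=\ker(\psi_n)$ is a coalgebraic subgroup of $\Gamma$, and the quotients $\Gamma/N_n\simeq G^n$ are pairwise non-isomorphic (they have different, e.g., dimensions, or at least $G^{n+1}$ is not a quotient of $G^n$ for a fixed finite $G\neq 1$; and for a positive-dimensional $G$ the dimension strictly increases). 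Hence the $N_n$ are pairwise distinct coalgebraic subgroups. Fourth, conclude: a neighborhood basis at $1$ must, for each $n$, contain some $N\leq N_n$, and since the $\Gamma/N_n$ are strictly increasing in size these $N$'s are pairwise distinct, so every neighborhood basis is infinite; thus $\rank(\Gamma)\geq\aleph_0$, and by the remark after the definition of rank (rank is finite iff the group is algebraic) $\Gamma^\CC(X)$ is not algebraic.

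\textbf{Main obstacle.} The only genuinely nontrivial input is the existence of a monogenic non-trivial $\CC$-group, i.e.\ reducing to $G=\langle g\rangle$ with a single topological generator; this is exactly where characteristic zero is used, via Lemma~\ref{lemma: finite generation}. Without it (as the paragraph preceding the lemma warns, and as the infinitesimal-groups example shows) the conclusion genuinely fails. A secondary point requiring a small argument is verifying $H_n=G^n$, i.e.\ that the subgroup of $G^n$ generated by the ``coordinate copies'' of $g$ is all of $G^n$: since $\langle g\rangle=G$, the coordinate subgroup $1\times\cdots\times G\times\cdots\times 1$ lies in $H_n$ for each slot, and these generate $G^n$. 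Everything else is a routine unwinding of the universal property and of the characterization of rank via neighborhood bases.
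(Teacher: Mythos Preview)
Your overall strategy is sound and close to the paper's, but there is a genuine gap at the step where you ``replace $G$ by the non-trivial $\CC$-group $\langle g\rangle$.'' A formation is only required to be closed under quotients and subdirect products, not under closed subgroups, so $\langle g\rangle$ need not lie in $\CC$. For instance, take $\CC$ to be the formation of semisimple algebraic groups in characteristic zero (Example~\ref{ex:formations}); for any $g\in G(\kb)$ the group $\langle g\rangle$ is abelian, hence not semisimple. Once this reduction fails, the rest of your argument breaks: the projections $H_n\to G$ are onto $\langle g\rangle$, not onto $G$, so $H_n$ is not a subdirect product of copies of $G$ and you cannot invoke the formation axioms to conclude $H_n\in\CC$; and without $H_n\in\CC$ the universal property of $\Gamma^\CC(X)$ does not produce the map $\psi_n$.

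The fix is simple and keeps your approach intact: do not pass to a monogenic group. By Lemma~\ref{lemma: finite generation} choose $g_1,\dots,g_m\in G(\kb)$ with $\langle g_1,\dots,g_m\rangle=G$, pick $nm$ distinct elements $x_{ij}\in X$ ($1\le i\le n$, $1\le j\le m$), and send $x_{ij}$ to the element of $G^n(\kb)$ with $g_j$ in slot $i$ and $1$ elsewhere (all other $x$'s to $1$). Then $\langle\varphi(X)\rangle=G^n$, which is a $\CC$-group by iterated subdirect products, and the universal property yields $\psi_n\colon\Gamma^\CC(X)\twoheadrightarrow G^n$. From here your conclusion that no single algebraic group can surject onto all $G^n$ is correct. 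The paper argues a bit differently: it fixes one $\CC$-group $G$, builds many epimorphisms $\Gamma^\CC(X)\twoheadrightarrow G$ indexed by finite subsets $Y\subseteq X$ (sending the extra elements of $Y$ to a fixed $g\neq 1$), and shows that the intersections of their kernels form a strictly descending chain by tracking where individual $\iota(y)$ land. Both routes hinge on the same two facts: Lemma~\ref{lemma: finite generation} (to get generators, hence genuine epimorphisms) and the infinitude of $X$ (to have room for arbitrarily many nontrivial images).
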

\begin{proof}
    By assumption there exists a non-trivial $\CC$-group $G$. Since $k$ has characteristic zero $G(\kb)\neq 1$ and we can choose $g\in G(\kb)$ with $g\neq 1$. Moreover, by Lemma \ref{lemma: finite generation}, we can find $g_1,\ldots,g_n\in G(\kb)$ such that $G=\langle g_1,\ldots,g_n\rangle$. Pick $x_1,\ldots,x_n\in X$ and for every finite subset $Y$ of $X$ containing $x_1,\ldots,x_n$ define a map $\varphi_Y\colon X\twoheadrightarrow G(\kb)$ by $\varphi_Y(x_i)=g_i$, $(i=1,\ldots,n)$, $\varphi_Y(y)=g$ for $y\in Y\smallsetminus\{x_1,\ldots,x_n\}$ and $\varphi_Y(x)=1$ for $x\in X\smallsetminus Y$. Then $\varphi_Y$ converges to $1$ and therefore defines an epimorphism $\f_Y\colon\Gamma^\CC(X)\twoheadrightarrow G$.

    Let $\{x_1,\ldots,x_n\}\subsetneqq Y_1\subsetneqq Y_2\subsetneqq\ldots$ be a strictly ascending chain of finite subsets of $X$. For $i\geq 1$ set $N_i=\bigcap_{j=1,\ldots,i}\ker(\f_{Y_i})$. If $y\in Y_{i+1}\smallsetminus Y_i$, then $\iota(y)\notin N_{i+1}(\kb)$ but $\iota(y)\in N_i(\kb)$. Thus $N_1\supsetneqq N_2\supsetneqq\ldots$ is a strictly descending chain of closed subgroups of $\Gamma^\CC(X)$. Hence $\Gamma^\CC(X)$ cannot be an algebraic group.
\end{proof}

\section{Saturated pro-$\CC$-groups}

The main goal of this section is to characterize free pro-$\CC$-groups in terms of embedding problems. To achieve this goal we first study certain pro-$\CC$-groups that solve ``many'' embedding problems, we call these groups \emph{saturated}.

\subsection{The rank of a proalgebraic group}

As we will show, for any formation $\CC$ there exists up to isomorphism a unique saturated pro-$\CC$-group $\Gamma$, once we fix the ``size'' of $\Gamma$ and this size is sufficiently big. The goal of this subsection is to make precise this notion of size by introducing the rank of a proalgebraic group.

%

The rank of a profinite group $G$ can be defined as the smallest cardinal number $\kappa$ such that $G$ has a set of $\kappa$ (topological) generators that converges to $1$.
At first sight, it may seem plausible to define the rank $r(G)$ of a proalgebraic group $G$, say over an algebraically closed field $k$, as the smallest cardinal $\kappa$ such that there exists a subset $X$ of $G(k)$ converging to $1$ with $\langle X\rangle=G$. However, this approach has drawbacks and yields pathologies. For example, over a field of positive characteristic, there may not exist such an $X$. (This is for example the case for $\Ga$.)

Also, for an infinite index set $I$, $r(\prod_{i\in I}\Ga)$ and $r(\prod_{i\in I}\Gm)$ behave quite differently. If $k$ has characteristic zero, then $r(\prod_{i\in I}\Ga)$ is infinite. On the other hand, if $k$ has transcendence degree greater or equal to $|I|$, then $r(\prod_{i\in I}\Gm)=1$ since $(g_i)_{i\in I}$ is a generator if $(g_i)_{i\in I}$ are algebraically independent over the prime field.

Below we will define the rank $\rank(G)$ for any proalgebraic group $G$. It satisfies the following equalities $\rank(\prod_{i\in I}\Ga)=\rank(\prod_{i\in I}\Gm)=|I|$.

\begin{prop} \label{prop: rank}
    Let $G$ be a proalgebraic group that is not algebraic. Then the following cardinals are equal.
    \begin{enumerate}
        \item The dimension of $k[G]$ as a $k$-vector space.
        \item The smallest cardinal $\kappa$ such that $k[G]$ can be generated as a $k$-algebra by $\kappa$ elements.
        \item The smallest cardinal $\kappa$ such that $k[G]$ is the directed union of $\kappa$ finitely generated Hopf subalgebras.
        \item The smallest cardinal $\kappa$ such that there exists a neighborhood basis at $1$ for $G$ of cardinality $\kappa$.
        \item The smallest cardinal $\kappa$ such that there exists a directed set $I$ of cardinality $\kappa$ and algebraic groups $G_i$ with $G\simeq\varprojlim_{i\in I}G_i$.
    \end{enumerate}
\end{prop}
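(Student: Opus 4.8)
The plan is to show that all five cardinals in Proposition \ref{prop: rank} coincide by a cyclic chain of inequalities, using the fact that $k[G]$ is a Hopf algebra which (by \cite[Section 3.3]{Waterhouse:IntroductiontoAffineGroupSchemes}) is the directed union of its finitely generated Hopf subalgebras, together with the dictionary between finitely generated Hopf subalgebras of $k[G]$ and coalgebraic subgroups $N$ of $G$ recalled in the preliminaries. Throughout, write $\kappa_1,\ldots,\kappa_5$ for the cardinals in (i)--(v); note all are infinite since $G$ is not algebraic (so $k[G]$ is not finitely generated as a $k$-algebra, hence infinite-dimensional over $k$).

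First I would establish (iii) $=$ (iv) $=$ (v). A neighborhood basis $\N$ at $1$ for $G$ corresponds, via $N \mapsto k[G/N]$, to a downward-directed family of finitely generated Hopf subalgebras whose union is $k[G]$ (downward-directedness of $\N$ gives directedness of the family, and $\bigcap_{N\in\N} N = 1$ translates to $\bigcup k[G/N] = k[G]$ since the defining ideal of $\bigcap N$ is the ideal generated by $\sum \I(N)$, or more directly because any finitely generated Hopf subalgebra lies in some $k[G/N]$ with $N \in \N$). Conversely a directed union expression $k[G] = \bigcup_{i} A_i$ by finitely generated Hopf subalgebras gives coalgebraic $N_i$ with $\bigcap N_i = 1$ and $\{N_i\}$ downward directed, hence a neighborhood basis. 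This gives $\kappa_3 = \kappa_4$. For $\kappa_4 = \kappa_5$: a neighborhood basis $\N$ of cardinality $\kappa$ yields $G = \varprojlim_{N \in \N} G/N$ over the directed set $\N$, so $\kappa_5 \le \kappa_4$; conversely, given $G \simeq \varprojlim_{i \in I} G_i$ with $|I| = \kappa_5$, the kernels of $G \to G_i$ form a neighborhood basis (this is noted in the paragraph before Lemma \ref{lemma: co-algebrai and subgroup}) of cardinality $\le |I|$, so $\kappa_4 \le \kappa_5$.

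Next I would close the loop with (i), (ii), (iii). For $\kappa_2 \le \kappa_1$: a generating set of $k[G]$ as a $k$-algebra has cardinality at most $\dim_k k[G]$ (take any $k$-basis). For $\kappa_3 \le \kappa_2$: if $k[G]$ is generated as a $k$-algebra by a set $S$ with $|S| = \kappa_2$, then for each finite subset $T \subseteq S$ choose a finitely generated Hopf subalgebra $A_T$ containing $T$ (possible since $k[G]$ is the directed union of its finitely generated Hopf subalgebras, so each of the finitely many elements of $T$ lies in one, and a common upper bound exists); then $\{A_T : T \subseteq S \text{ finite}\}$ is directed with union $k[G]$ and has cardinality $\le |S|^{<\omega} = \kappa_2$ (as $\kappa_2$ is infinite). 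Finally $\kappa_1 \le \kappa_3$: if $k[G] = \bigcup_{i \in I} A_i$ is a directed union of finitely generated Hopf subalgebras with $|I| = \kappa_3$, then each $A_i$, being a finitely generated $k$-algebra, has $\dim_k A_i \le \aleph_0$, so $\dim_k k[G] \le |I| \cdot \aleph_0 = \kappa_3$. Chaining $\kappa_1 \le \kappa_3 \le \kappa_2 \le \kappa_1$ and $\kappa_3 = \kappa_4 = \kappa_5$ gives the result.

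The main obstacle, such as it is, is bookkeeping: verifying carefully that the correspondence $N \leftrightarrow k[G/N]$ converts a neighborhood basis into a directed family of Hopf subalgebras with union $k[G]$ and vice versa — in particular that $\bigcap_{N \in \N} N = 1$ is equivalent to $\bigcup_{N \in \N} k[G/N] = k[G]$. One direction is immediate ($\bigcup k[G/N]$ is a Hopf subalgebra $k[G/N_0]$ with $N_0 = \bigcap N$, so it equals $k[G]$ iff $N_0 = 1$); the other uses that every finitely generated Hopf subalgebra of $k[G]$ is contained in some $k[G/N]$ with $N \in \N$, which is exactly the defining property of a neighborhood basis combined with the fact that finitely generated Hopf subalgebras correspond to coalgebraic subgroups. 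Everything else is a routine cardinality estimate exploiting that all five cardinals are infinite.
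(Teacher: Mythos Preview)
Your argument is correct and follows essentially the same approach as the paper: both proofs reduce to elementary cardinal bookkeeping using the correspondence between coalgebraic subgroups and finitely generated Hopf subalgebras, and both use that a finitely generated $k$-algebra has countable $k$-dimension. The only organizational difference is that the paper proves $\kappa_1=\kappa_2$ and $\kappa_2=\kappa_3$ as separate two-sided equalities, whereas you close the cycle $\kappa_1\le\kappa_3\le\kappa_2\le\kappa_1$ directly; your route saves one inequality.

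One small point worth tightening: in your proof of $\kappa_3\le\kappa_2$, you choose for each finite $T\subseteq S$ \emph{some} finitely generated Hopf subalgebra $A_T$ containing $T$, and then assert that $\{A_T\}$ is directed. As written this need not hold, since $A_{T_1\cup T_2}$ is only required to contain $T_1\cup T_2$, not $A_{T_1}$ or $A_{T_2}$. The fix is trivial (and is what the paper does): take $A_T$ to be the \emph{smallest} Hopf subalgebra containing $T$, which is finitely generated by the standard fact from \cite[Section 3.3]{Waterhouse:IntroductiontoAffineGroupSchemes}; then $T\subseteq T'$ forces $A_T\subseteq A_{T'}$, and directedness follows from directedness of the finite subsets of $S$.
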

\begin{proof}
    For $i=1,\ldots,5$ let $\kappa_i$ denote the cardinal defined in point $(i)$ above. Clearly, $\kappa_2\leq\kappa_1$. On the other hand, if $(f_i)_{i\in I}$ generates $k[G]$ as a $k$-algebra, then $\bigcup_{J\subseteq I}F_J$ generates $k[G]$ as a $k$-vector space, where, for every finite subset $J$ of $I$ the set $F_J$ consists of all monomials in $(f_j)_{j\in J}$. As $\bigcup_{J\subseteq I}F_J$ is the union of $|I|$ countable subsets, we see that $k[G]$ can be generated by $|I|$ elements as a $k$-vector space. Thus $\kappa_1\leq\kappa_2$.

    Assume that $(f_i)_{i\in I}$ generates $k[G]$ as a $k$-algebra. For every finite subset $J$ of $I$ let $A_J$ be the smallest Hopf subalgebra of $k[G]$ that contains all the $f_j$ for $j\in J$. Then $A_J$ is finitely generated as a $k$-algebra (\cite[Section 3.3]{Waterhouse:IntroductiontoAffineGroupSchemes}) and the $A_J$'s form a directed system whose union is $k[G]$. Thus $\kappa_3\leq\kappa_2$. On the other hand, choosing a finite set of generators for every Hopf subalgebra in a directed system shows that $\kappa_2\leq\kappa_3$.
    As finitely generated Hopf subalgebras correspond to coalgebraic subgroups we have $\kappa_3=\kappa_4$.

    If $\N$ is a neighborhood basis at $1$ for $G$, then $G\simeq\varprojlim_{N\in\N} G/N$. Conversely, if $G\simeq\varprojlim_{i\in I}G_i$, and $N_i$ is the kernel of the projection $G\to G_i$, then the $N_i$ are a neighborhood basis at $1$ for $G$. Thus $\kappa_4=\kappa_5$.
\end{proof}

We would like to define the \emph{rank} of a proalgebraic group as the quantity characterized in Proposition \ref{prop: rank}. We note that this notion of rank has nothing to do with the classical notion of rank for algebraic groups defined as the dimension of a Cartan subgroup. For an algebraic group $G$, the quantities defined in (iii), (iv) and (v) of Proposition~\ref{prop: rank} are all equal to $1$, but the quantities defined in (i) and (ii) may be distinct and different from $1$. For example, for $G=\Ga^2$, the quantity defined in (i) is $|\nn|$, whereas the quantity defined in (ii) is $2$. The exact definition of the rank of an algebraic group is largely irrelevant for what follows (as long as it is finite). The notationally most convenient choice is to set it equal to $1$ for a non-trivial algebraic group and equal to $0$ for the trivial group. We therefore make the following definition.

\begin{defi}
 The \emph{rank} $\rank(G)$ of a non-trivial proalgebraic group $G$ is the smallest cardinal $\kappa$ such that there exists a neighborhood basis at $1$ for $G$ of cardinality $\kappa$. The rank of the trivial algebraic group is $0$.
\end{defi}


Thus the rank of a proalgebraic group $G$ is finite if and only if $G$ is algebraic. 
Let $\mathtt{G}$ be an infinite profinite group and let $G=\mathtt{G}_k$ denote the corresponding proalgebaic group.
The cardinality of a neighborhood basis at $1$ for $\mathtt{G}$ consisting of open normal subgroups does not depend on the neighborhood basis and agrees with the minimal cardinality of any neighborhood basis at $1$ for $\mathtt{G}$ (not necessarily consisting of groups). This cardinal number is denoted by $w_0(\mathtt{G})$ in \cite{RibesZalesskii:ProfiniteGroups}. Since the coalgebraic subgroups of $G$ are in one-to-one correspondence with the open normal subgroups of $\mathtt{G}$, it follows that $\rank(G)=w_0(\mathtt{G})$. In particular, if $G$ is not (topologically) finitely generated, it follows that $\rank(G)=\rank(\mathtt{G})$. (See \cite[Cor. 2.6.3]{RibesZalesskii:ProfiniteGroups} or \cite[Prop. 17.1.2]{FriedJarden:FieldArithmetic}.) So the rank for proalgebraic groups generalizes the notion of rank for profinite groups.\

%
%
%

\begin{ex} \label{ex: rank for fibre product}
        Let $(G_i)_{i\in I}$ be an infinite family of non-trivial algebraic groups and $G=\prod_{i\in I} G_i$. Then $\rank(G)=|I|$.
    More generally, for an arbitrary set $I$ let $\f_i\colon G_i\twoheadrightarrow H$ ($i\in I$) be a family of epimorphisms of proalgebraic groups. Let $\prod_{i\in I} (G_i\twoheadrightarrow H)$ denote the functor from the category of $k$-algebras to the category of groups defined by $$\left(\prod_{i\in I} (G_i\twoheadrightarrow H)\right)(R)=\{(g_i)_{i\in I}|\ g_i\in G_i(R),\ \f_i(g_i)=\f_j(g_j) \ \forall \ i,j \in I\}$$
    for any $k$-algebra $R$. This functor is representable, i.e., a proalgebraic group. Indeed, it is represented by $\varinjlim         k[G_{i_1}]\otimes_{k[H]}\ldots\otimes_{k[H]}k[G_{i_n}]$, where the direct limit is taken over all finite subsets $\{i_1,\ldots,i_n\}$ of $I$ ordered by inclusion. We call $\prod_{i\in I} (G_i\twoheadrightarrow H)$ the \emph{fibre product} of the $\f_i$. If $H=1$ this simply reduces to the product. If $I=\{1,2\}$ one usually writes $G_1\times_H G_2$ for the corresponding fibre product.

    It follows from the above description of the coordinate ring of the fibred product that if $\kappa$ is an infinite cardinal such that $\rank(G_i)\leq \kappa$ and $|I|\leq\kappa$, then  $\rank(\prod_{i\in I} (G_i\twoheadrightarrow H))\leq \kappa.$
\end{ex}

\begin{ex}
    If $M$ is a non-trivial abelian group, then $\rank(D(M))=|M|$ unless $M$ is finitely generated, in which case $\rank(D(M))=1$.
\end{ex}

\begin{lemma} \label{lemma: rank for subgroups and quotients}
    Let $G$ be a proalgebraic group. If $H$ is a closed subgroup or a quotient of $G$, then its rank is such that $\rank(H)\leq \rank(G)$.
\end{lemma}
\begin{proof}
    This is clear if $G$ is algebraic and if $G$ is not algebraic the statement follows from Proposition~\ref{prop: rank}, using e.g., characterization (i).
\end{proof}

\begin{lemma} \label{lemma: ir for algebraic kernel}
    Let $\f\colon G\twoheadrightarrow H$ be an epimorphism of proalgebraic groups with algebraic kernel. Then we have $\ir(G)=\ir(H)$.
\end{lemma}
\begin{proof}
    By Lemma \ref{lemma: rank for subgroups and quotients} it suffices to show that $\rank(G)\leq\rank(H)$. By Corollary \ref{cor: intersection 1} there exists a coalgebraic subgroup $M$ of $G$ such that $M\cap\ker(\f)=1$.
    Let $\N$ be a neighborhood basis at $1$ for $H$. We claim that the set $\N'=\{\f^{-1}(N)\cap M|\ N\in\N\}$ is a neighborhood basis at $1$ for $G$. First of all, note that $\f^{-1}(N)\cap M\leq G$ is coalgebraic because $\f^{-1}(N)$ and $M$ are coalgebraic. Furthermore, $\N'$ is downward directed because $\N$ is and
    $$\bigcap_{N\in\N}(\f^{-1}(N)\cap M)=\f^{-1}(\bigcap_{N\in\N} N)\cap M=\ker(\f)\cap M=1.$$
    Thus $\ir(G)\leq\ir(H)$.
\end{proof}

\begin{lemma} \label{lemma: ir for intersection}
    Let $G$ be a proalgebraic group and $(N_i)_{i\in I}$ a family of closed normal subgroups of $G$. If $\ir(G/N_i)\leq\kappa$ for all $i\in I$ and $|I|\leq \kappa$, then $\ir(G/\cap N_i)\leq\kappa$.
\end{lemma}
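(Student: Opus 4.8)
The plan is to exhibit a neighborhood basis at $1$ for $G/\bigcap_{i\in I}N_i$ of cardinality at most $\kappa$; the conclusion then follows immediately from the definition of rank. I first dispose of the case where $\kappa$ is finite: then $I$ is finite, each $G/N_i$ is algebraic, so $G/\bigcap_i N_i$ embeds as a closed subgroup of the algebraic group $\prod_{i\in I}G/N_i$ and is therefore algebraic, whence $\rank(G/\bigcap_i N_i)\leq 1\leq\kappa$ (and if $\kappa=0$ then $I=\emptyset$, so $\bigcap_i N_i=G$ and the quotient is trivial). From now on $\kappa$ is infinite.

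Set $N=\bigcap_{i\in I}N_i$ and let $q_i\colon G/N\twoheadrightarrow G/N_i$ be the canonical epimorphism, which exists because $N\leq N_i$ and has kernel $N_i/N$. For each $i$ I choose a neighborhood basis $\mathcal{N}_i$ at $1$ for $G/N_i$ with $|\mathcal{N}_i|\leq\kappa$ and pull it back, setting $\mathcal{M}_i=\{\,q_i^{-1}(P)\mid P\in\mathcal{N}_i\,\}$. Each $q_i^{-1}(P)$ is a coalgebraic normal subgroup of $G/N$, since $(G/N)/q_i^{-1}(P)\simeq(G/N_i)/P$ is algebraic. Let $\mathcal{M}$ be the collection of all finite intersections of members of $\bigcup_{i\in I}\mathcal{M}_i$; this is the candidate basis. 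Every member of $\mathcal{M}$ is coalgebraic, because a finite intersection of coalgebraic subgroups is coalgebraic (the quotient embeds into a finite product of algebraic groups), and $\mathcal{M}$ is downward directed by construction. For triviality of the total intersection,
\[
\bigcap_{M\in\mathcal{M}}M\ \leq\ \bigcap_{i\in I}\ \bigcap_{P\in\mathcal{N}_i}q_i^{-1}(P)\ =\ \bigcap_{i\in I}q_i^{-1}(1)\ =\ \bigcap_{i\in I}(N_i/N)\ =\ \Big(\bigcap_{i\in I}N_i\Big)/N\ =\ 1.
\]
Finally $|\mathcal{M}|$ is at most the number of finite subsets of $\bigcup_i\mathcal{M}_i$, and $|\bigcup_i\mathcal{M}_i|\leq|I|\cdot\kappa=\kappa$ because $\kappa$ is infinite; hence $|\mathcal{M}|\leq\kappa$ and $\rank(G/N)\leq\kappa$.

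The one point deserving care is the identity $\bigcap_{i\in I}(N_i/N)=\big(\bigcap_{i\in I}N_i\big)/N$ for an arbitrary family, i.e.\ that intersecting normal closed subgroups containing $N$ commutes with passage to $G/N$; this follows from the order-reversing correspondence between normal closed subgroups of $G$ and Hopf subalgebras of $k[G]$ together with the identity $k[G/M_1]\cdot k[G/M_2]=k[G/(M_1\cap M_2)]$ recalled in the preliminaries, extended to arbitrary families by taking directed unions over finite subfamilies. This same observation yields a shorter, purely Hopf-algebraic alternative: $k[G/\bigcap_i N_i]$ is precisely the $k$-subalgebra of $k[G]$ generated by $\bigcup_{i\in I}k[G/N_i]$, and since $\dim_k k[G/N_i]\leq\kappa$ for every $i$ (by Proposition~\ref{prop: rank}(i) when $G/N_i$ is not algebraic, and because $k[G/N_i]$ is a finitely generated $k$-algebra otherwise), this exhibits $k[G/\bigcap_i N_i]$ as generated by at most $|I|\cdot\kappa=\kappa$ elements, so Proposition~\ref{prop: rank}(ii) gives $\rank(G/\bigcap_i N_i)\leq\kappa$ (the case where this group is algebraic being trivial).
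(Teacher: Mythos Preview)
Your proof is correct. Your primary argument constructs an explicit neighborhood basis at $1$ for $G/\bigcap N_i$ by pulling back bases from the individual quotients and closing under finite intersections; this is a valid and somewhat more ``topological'' route than the paper's. The paper instead goes directly through the Hopf-algebra characterization (Proposition~\ref{prop: rank}(ii)): it observes that $k[G/\bigcap N_i]$ is generated as a $k$-algebra by the subalgebras $k[G/N_i]$, each of which admits at most $\kappa$ generators, so the union of these generating sets has cardinality at most $\kappa\cdot\kappa=\kappa$. This is precisely the ``shorter alternative'' you sketch in your final paragraph, so in fact you have recovered the paper's argument as well. Your neighborhood-basis approach has the minor virtue of not needing the fact that $k[G/\bigcap N_i]$ is generated by the $k[G/N_i]$ (which is the identity $k[G/N_1]\cdot k[G/N_2]=k[G/(N_1\cap N_2)]$ extended to arbitrary families), though you end up invoking essentially the same fact anyway to justify $\bigcap_i(N_i/N)=1$; the Hopf-algebra route is shorter.
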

\begin{proof}
The statement is clear for finite $\kappa$. So let us assume that $\kappa$ is infinite. We have the inclusions $k[G/N_i]\subseteq k[G/\cap N_i]\subseteq k[G]$ and all the $k[G/N_i]$ generate $k[G/\cap N_i]$ as a $k$-algebra. Thus joining the generators of the individual $k[G/N_i]$, gives generators for $k[G/\cap N_i]$. Consequently, $k[G/\cap N_i]$ can be generated by $\kappa\cdot\kappa=\kappa$ elements. Thus the claim follows from Proposition \ref{prop: rank}.
\end{proof}

%

The following proposition shows that the rank can also be understood as the minimal length of a subnormal series satisfying certain conditions.

\begin{prop} \label{prop: bound rank}
    Let $G$ be a proalgebraic group and $\mu$ an ordinal number. If there exists a chain $$G=N_0\geq N_1\geq\ldots\geq N_\lambda\geq\ldots\geq N_\mu=1$$
    of normal closed subgroups $N_\lambda$ of $G$ ($\lambda\leq\mu$) with
    \begin{enumerate}
        \item $N_\lambda/N_{\lambda+1}$ algebraic for all $\lambda<\mu$ and
        \item $N_\lambda=\bigcap_{\lambda'<\lambda}N_{\lambda'}$ if $\lambda$ is a limit ordinal,
    \end{enumerate}
    then $\rank(G)\leq|\mu|$. Moreover, there exists an ordinal number $\mu$ with $|\mu|=\rank(G)$ and a chain as above satisfying (i) and (ii). In particular, $\rank(G)=\min|\mu|$, where $\mu$ ranges over all ordinals for which there exists such a chain.
\end{prop}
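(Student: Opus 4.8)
The plan is to prove the two directions separately: first that any chain as described bounds the rank from above, and then that a chain of length exactly $\rank(G)$ exists. The first direction is essentially a transfinite induction on $\mu$ combined with Lemmas \ref{lemma: ir for algebraic kernel} and \ref{lemma: ir for intersection}. For the induction I would define, for each $\lambda \le \mu$, the quotient $Q_\lambda = G/N_\lambda$, and prove by transfinite induction on $\lambda$ that $\rank(Q_\lambda) \le |\lambda|$ (with the convention that this is $1$, or $0$, for finite $\lambda$; since we only care about the infinite case one should phrase the bound as $\rank(Q_\lambda) \le \max(|\lambda|, 1)$ or simply handle finite $\lambda$ by noting such a $Q_\lambda$ is algebraic). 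The successor step: $Q_{\lambda+1} = G/N_{\lambda+1}$ maps onto $Q_\lambda = G/N_\lambda$ with kernel $N_\lambda/N_{\lambda+1}$, which is algebraic by hypothesis (i), so Lemma \ref{lemma: ir for algebraic kernel} gives $\rank(Q_{\lambda+1}) = \rank(Q_\lambda) \le |\lambda| \le |\lambda+1|$. The limit step: if $\lambda$ is a limit ordinal, then by (ii) $N_\lambda = \bigcap_{\lambda' < \lambda} N_{\lambda'}$, so $Q_\lambda = G/\bigcap_{\lambda'<\lambda} N_{\lambda'}$; applying Lemma \ref{lemma: ir for intersection} with the family $(N_{\lambda'})_{\lambda' < \lambda}$, index set of cardinality $|\lambda| \le |\mu|$ and each $\rank(G/N_{\lambda'}) \le |\lambda'| \le |\lambda|$, we get $\rank(Q_\lambda) \le |\lambda|$. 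Taking $\lambda = \mu$ yields $\rank(G) = \rank(Q_\mu) \le |\mu|$.

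For the existence of a chain of length $\rank(G)$, I would use characterization (iv) of Proposition \ref{prop: rank}: pick a neighborhood basis $\N = \{M_\alpha : \alpha < \kappa\}$ at $1$ for $G$ of cardinality $\kappa = \rank(G)$, indexed by the ordinals below $\kappa$. Now define $N_\lambda = \bigcap_{\alpha < \lambda} M_\alpha$ for $\lambda \le \kappa$ (so $N_0 = G$, reading the empty intersection as $G$). Condition (ii) holds by construction, since for a limit ordinal $\lambda$ we have $\bigcap_{\lambda' < \lambda} N_{\lambda'} = \bigcap_{\alpha < \lambda} M_\alpha = N_\lambda$, and $N_\kappa = \bigcap_{\alpha<\kappa} M_\alpha = 1$ because $\N$ is a neighborhood basis. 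The issue is that condition (i) — algebraicity of $N_\lambda/N_{\lambda+1}$ — can fail: $N_{\lambda+1} = N_\lambda \cap M_\lambda$, so $N_\lambda/N_{\lambda+1}$ embeds into $G/M_\lambda$, which is algebraic; hence $N_\lambda/N_{\lambda+1}$ is algebraic, so in fact (i) holds automatically. So this naive construction already works and gives a chain of length $\kappa$, hence $|\mu| = \kappa$ for $\mu = \kappa$. Combined with the first direction, $\rank(G) = \min|\mu|$ over all such chains.

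The main obstacle, and the point requiring the most care, is the limit step of the transfinite induction in the first direction: one must ensure the hypotheses of Lemma \ref{lemma: ir for intersection} are genuinely met, in particular that the index set has cardinality $\le \kappa$ where $\kappa$ is the bound we are proving. Since we fix $\mu$ at the outset and all $\lambda < \mu$ have $|\lambda| \le |\mu|$, this is fine, but it is worth being explicit that the induction hypothesis at stage $\lambda'$ gives $\rank(G/N_{\lambda'}) \le |\lambda'| \le |\mu|$ uniformly, so that Lemma \ref{lemma: ir for intersection} can be applied with the single cardinal $\kappa = |\mu|$ (or, more tightly, $\kappa = |\lambda|$ at each limit stage, as above). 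A minor bookkeeping point throughout is the convention that $\rank$ of a nontrivial algebraic group is $1$ and $\rank$ of the trivial group is $0$: one should either restrict the transfinite bound to the regime where $|\lambda|$ is infinite, or simply observe that for finite $\lambda$ the quotient $Q_\lambda$ is algebraic (being built from finitely many algebraic extension steps) so its rank is $\le 1 \le |\lambda|$ for $\lambda \ge 1$, and is $0 = |N_0 = G|$... — in any case the finite-rank case is subsumed under "$G$ algebraic," which Proposition \ref{prop: rank} already handles, so no essential difficulty arises there.
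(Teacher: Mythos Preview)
Your proposal is correct and follows essentially the same approach as the paper: transfinite induction on $\lambda$ to bound $\rank(G/N_\lambda)$ using Lemmas \ref{lemma: ir for algebraic kernel} and \ref{lemma: ir for intersection} for the successor and limit steps, and for existence the same construction $N_\lambda=\bigcap_{\alpha<\lambda}M_\alpha$ from a neighborhood basis of size $\rank(G)$, with the same observation that $N_\lambda/N_{\lambda+1}\hookrightarrow G/M_\lambda$ is algebraic. Your treatment is in fact slightly more careful about the finite-$\lambda$ edge cases than the paper's.
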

\begin{proof}
    We will show by transfinite induction that $\rank(G/N_\lambda)\leq |\lambda|$ for every $\lambda\leq\mu$. The initial case $\lambda=0$ is true by definition. If $\lambda=\lambda'+1$ is a successor ordinal, then $G/N_{\lambda'+1}\twoheadrightarrow G/N_{\lambda'}$ has kernel $N_{\lambda'}/N_{\lambda'+1}$. Thus $\rank(G/N_\lambda)=\rank(G/N_{\lambda'})\leq |\lambda'|\leq |\lambda|$ by Lemma \ref{lemma: ir for algebraic kernel}.
    If $\lambda$ is a limit ordinal, then $\rank(G/N_\lambda)\leq |\lambda|$ by Lemma \ref{lemma: ir for intersection}.

    Let us now show that there exists a chain with $|\mu|=\rank(G)$. Let $\N$ be a neighborhood basis at $1$ for $G$ with $|\N|=\rank(G)$ and let $\mu$ be an ordinal number with $|\mu|=|\N|$. So we may write $\N=\{U_\lambda|\ \lambda<\mu\}$. For $\lambda\leq \mu$ we set  $N_\lambda=\bigcap_{\lambda'<\lambda} U_\lambda$. Clearly the $N_\lambda$'s form a descending chain of normal closed subgroups of $G$ such that $N_0=N$ and $N_\mu=1$. Moreover $N_\lambda/N_{\lambda+1}=N_\lambda/(N_\lambda\cap U_{\lambda+1})=U_{\lambda+1}N_\lambda/U_{\lambda+1}$ is algebraic, because $U_{\lambda+1}N_\lambda/U_{\lambda+1}$ embeds into $N/U_{\lambda+1}$.
    Clearly (ii) is satisfied.
\end{proof}

Our next goal is to determine the rank of a free pro-$\CC$-group. The following lemma yields a lower bound.

\begin{lemma} \label{lemma: ir is upper bound for X}
    Let $G$ be a proalgebraic group that is not algebraic and let $R$ be a $k$-algebra. If $X\subseteq G(R)$ converges to $1$, then $|X|\leq \ir(G)$.
\end{lemma}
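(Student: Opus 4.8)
The plan is to compare $X$ with a minimal neighborhood basis. Since $G$ is not algebraic, $\kappa:=\ir(G)$ is an infinite cardinal, and by the definition of the rank there is a neighborhood basis $\N$ at $1$ for $G$ with $|\N|=\kappa$. For each $N\in\N$ the hypothesis that $X$ converges to $1$ says precisely that $X_N:=X\smallsetminus N(R)$ is finite. I would then show that every element of $X$, with the possible exception of the identity $e\in G(R)$, lies in some $X_N$.

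The key point is that $\bigcap_{N\in\N}N(R)=\{e\}$. Since $\N$ is a neighborhood basis at $1$ we have $\bigcap_{N\in\N}N=1$, and such intersections of closed subgroups can be computed pointwise: if $g\in G(R)=\Hom(k[G],R)$ lies in $N(R)$ for every $N\in\N$, then $\ker(g)$ contains the defining ideal of each $N$, hence contains the defining ideal of $\bigcap_{N\in\N}N=1$, so $g=e$. (Equivalently, use that projective limits of proalgebraic groups are taken pointwise.) Consequently, for $x\in X$ with $x\neq e$ there is some $N\in\N$ with $x\notin N(R)$, i.e.\ $x\in X_N$; thus $X\smallsetminus\{e\}\subseteq\bigcup_{N\in\N}X_N$.

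It remains to count: $|X|\leq 1+\big|\bigcup_{N\in\N}X_N\big|\leq 1+\sum_{N\in\N}|X_N|\leq 1+|\N|\cdot\aleph_0=1+\kappa=\kappa$, where we used that each $X_N$ is finite and that $\kappa$ is infinite. This gives $|X|\leq\ir(G)$.

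There is no serious obstacle here; the only point worth stating carefully is the pointwise computation of $\bigcap_{N\in\N}N(R)$, which is exactly where the defining property $\bigcap_{N\in\N}N=1$ of a neighborhood basis is used, together with the Hopf-algebra description of coalgebraic subgroups. I also note that the hypothesis ``$G$ not algebraic'' enters only to guarantee that $\ir(G)$ is infinite, so that the factor $\aleph_0$ coming from the finite sets $X_N$ is absorbed; for algebraic $G$ the statement fails in general (e.g.\ an infinite $X\subseteq\Ga(k)$ almost all of whose entries are $0$ converges to $1$, yet $\ir(\Ga)=1$).
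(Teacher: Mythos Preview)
Your proof is correct and follows the same approach as the paper's: choose a neighborhood basis $\N$ of minimal cardinality, observe that each $X\smallsetminus N(R)$ is finite, and cover $X$ (minus the identity) by these sets. The paper states this in one line, simply asserting $X=\bigcup_{N\in\N}(X\smallsetminus N(R))$ after assuming $1\notin X$; you have carefully justified the point $\bigcap_{N\in\N}N(R)=\{e\}$, which is the only thing one might pause on.

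One small correction to your closing remark: your proposed counterexample for algebraic $G$ does not quite work, since $X$ is a \emph{subset} of $G(R)$, so ``an infinite $X\subseteq\Ga(k)$ almost all of whose entries are $0$'' is impossible (at most one element of $X$ equals $0$). For algebraic $G$ the trivial subgroup is coalgebraic, so a subset $X$ converging to $1$ is automatically finite. The lemma still fails in that case, but for a different reason: any finite $X\subseteq G(R)$ converges to $1$ trivially, and one can have $|X|>1=\ir(G)$.
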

\begin{proof}
     Let $\N$ be a neighborhood basis at $1$ for $G$. We have $X=\bigcup_{N\in\N} (X\smallsetminus N(R))$ (assuming w.l.o.g. $1\notin X$) with $X\smallsetminus N(R)$ finite. So $|X|\leq|\N|$.
\end{proof}

The next lemma yields an upper bound for the rank of free pro-$\CC$-groups.

\begin{lemma}
    Let $\CC$ be a formation and let $X$ be a set. Then $\Gamma^\CC(X)$ has at most $|\kb||X|$ coalgebraic subgroups.
\end{lemma}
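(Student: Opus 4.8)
The goal is to bound the number of coalgebraic subgroups of $\Gamma = \Gamma^\CC(X) = \Gamma^\CC_{\kb/k}(X)$ by $|\kb|\cdot|X|$ (interpreting this as $\max(|\kb|,|X|,\aleph_0)$ in the infinite case, and noting the statement is vacuous/trivial when $X$ is finite or $\kb$ is finite only in degenerate situations). Coalgebraic subgroups of $\Gamma$ are in bijection with the finitely generated Hopf subalgebras of $k[\Gamma]$, so it suffices to bound the number of finitely generated Hopf subalgebras of $k[\Gamma]$, and for that it is enough to bound $\dim_k k[\Gamma]$: a finitely generated Hopf subalgebra is determined by a finite tuple of elements of $k[\Gamma]$, so the number of such subalgebras is at most $|k[\Gamma]|^{<\omega} = \max(|k[\Gamma]|, \aleph_0)$, and $|k[\Gamma]| = \max(|k|, \dim_k k[\Gamma], \aleph_0)$. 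So the crux is the estimate $\dim_k k[\Gamma] \leq \max(|\kb|,|X|,\aleph_0)$, equivalently (by Proposition~\ref{prop: rank}, characterization (i), once we know $\Gamma$ is not algebraic) that $\rank(\Gamma) \leq \max(|\kb|,|X|,\aleph_0)$.

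The plan for the dimension estimate is to go back to the tannakian construction of $\Gamma$ in the proof of Theorem~\ref{theo: free pro-alg group}. There $k[\Gamma]$ is built from the category $\Rep_{cf}(F_X)$ of cofinite representations of the free abstract group $F_X$ over $R=\kb$, and $k[\Gamma] = \varinjlim k[\Gamma_V]$ where $V$ runs over cofinite representations and $\Gamma_V$ is the image of $\Gamma$ in $\Gl(V)$; concretely $k[\Gamma]$ is generated as a $k$-algebra by the matrix coefficients of all cofinite representations, so $\dim_k k[\Gamma] \leq \max(\aleph_0, \#\{\text{cofinite representations up to iso}\}\cdot \aleph_0)$. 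First I would count cofinite representations: a cofinite representation is a homomorphism $\f\colon F_X \to \Gl_n(\kb)$ for some $n$ that kills almost all generators, i.e.\ factors through the free group $F_S$ on some \emph{finite} subset $S\subseteq X$. The number of finite subsets of $X$ is $\max(|X|,\aleph_0)$, and for fixed finite $S$ and fixed $n$ the number of homomorphisms $F_S \to \Gl_n(\kb)$ is at most $(|\Gl_n(\kb)|)^{|S|} \leq |\kb|^{|S|} = \max(|\kb|,\aleph_0)$ (each homomorphism is determined by the images of the finitely many generators in $S$, each image being a matrix over $\kb$). Summing over $n\in\nn$ and over finite $S\subseteq X$ gives at most $\max(|\kb|,|X|,\aleph_0)$ cofinite representations. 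Hence $\dim_k k[\Gamma]\leq\max(|\kb|,|X|,\aleph_0) = |\kb||X|$, and the number of finitely generated Hopf subalgebras — hence of coalgebraic subgroups — is bounded by the same cardinal.

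The routine steps are the cardinal arithmetic and the passage from "matrix coefficients of cofinite representations generate $k[\Gamma]$" (which is essentially the content of $k[\Gamma]=\varinjlim k[\Gamma_V]$ together with the equivalence $\Rep(\Gamma)\simeq\Rep_{cf}(F_X)$ established in the proof of Theorem~\ref{theo: free pro-alg group}); these I would state briefly rather than belabor. The one point that needs a little care — and the main (small) obstacle — is getting a clean handle on "the matrix coefficients of all cofinite representations generate $k[\Gamma]$" directly from the construction: one should note that every finitely generated Hopf subalgebra of $k[\Gamma]$ is of the form $k[\Gamma_V]$ for a suitable cofinite representation $V$ (take $V$ a faithful representation of the corresponding algebraic quotient, which is a $\CC$-group and hence an algebraic group, so its coordinate ring is generated by matrix coefficients of $V$), and $\dim_k k[\Gamma_V] \leq \aleph_0$ for each such $V$ since $\Gamma_V$ is algebraic. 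Then $k[\Gamma] = \bigcup_V k[\Gamma_V]$ is a union of at most $\max(|\kb|,|X|,\aleph_0)$ subspaces each of countable dimension, giving the bound on $\dim_k k[\Gamma]$ and closing the argument. Note that for the formation $\CC$ of \emph{all} algebraic groups this is exactly the construction of Theorem~\ref{theo: free pro-alg group}; for general $\CC$ one has $k[\Gamma^\CC(X)] = k[\pi_\CC(\Gamma)] \subseteq k[\Gamma]$, so the bound is inherited.
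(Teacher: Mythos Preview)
Your proof is correct, and the underlying combinatorial count is the same as the paper's: a map from $X$ into $G(\kb)$ that is trivial on all but finitely many elements is determined by a finite subset of $X$ and a finite tuple of elements of $G(\kb)$, and there are at most $|\kb||X|$ such choices. The packaging, however, differs. The paper argues directly from the universal property of $\Gamma^\CC(X)$: every coalgebraic subgroup is the kernel of an epimorphism $\Gamma^\CC(X)\twoheadrightarrow H$ with $H\in\CC$, and such an epimorphism is determined by a finite subset $\{x_1,\dots,x_n\}\subseteq X$ together with the images $\f_{\kb}(x_i)\in H(\kb)$; counting these choices (and isomorphism classes of $H$) gives the bound immediately. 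You instead go back to the tannakian construction, count cofinite representations $F_X\to\Gl_n(\kb)$, and use this to bound $\dim_k k[\Gamma]$, from which you deduce the bound on finitely generated Hopf subalgebras. Your route is more circuitous but has the minor advantage of yielding the dimension bound $\dim_k k[\Gamma]\leq|\kb||X|$ explicitly, which is essentially what the paper extracts from this lemma anyway (via Proposition~\ref{prop: rank}) in Corollary~\ref{cor: inequality for rank of free group}. The paper's argument is shorter because it never needs to unpack the tannakian description of $k[\Gamma]$: the universal property alone pins down the epimorphisms.
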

\begin{proof}
    Let $\N$ denote the set of all coalgebraic subgroups of $\Gamma^\CC(X)$. These all arise as kernels of some epimorphism $\f\colon\Gamma^\CC(X)\twoheadrightarrow H$ for some $\CC$-group $H$. Such a $\f$ is determined by choosing a finite subset $\{x_1,\ldots,x_n\}$ of $X$ and the images $\f_{\kb}(x_1),\ldots,\f_{\kb}(x_n)\in H(\kb)$. For a fixed $H$, there are $|X||H(\kb)|$ choices. There are, up to isomorphism, at most $|\kb|$ algebraic groups over $k$. Thus, also allowing $H$ to vary, we see that there are at most $|\kb||X|$ coalgebraic subgroups.
%
%
%
%
%
\end{proof}

Combining the previous two lemmas we obtain:

\begin{cor} \label{cor: inequality for rank of free group}
    Let $\CC$ be a formation and let $X$ be a set such that $\Gamma^\CC(X)$ is not algebraic. Then
    \begin{equation} \label{eqn: inequ for rank}
    |X|\leq\rank(\Gamma^\CC(X))\leq |\kb||X|.
    \end{equation}
 In particular, if $|X|\geq|\kb|$, then $\rank(\Gamma^\CC(X))=|X|$. \qed
\end{cor}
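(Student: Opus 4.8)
Write $\Gamma=\Gamma^\CC(X)$. The two bounds in the statement are each an immediate consequence of one of the two lemmas just proved, so the plan is to assemble them and then finish with a short cardinal computation; the only steps that take any care are two pieces of bookkeeping, which I flag below.

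For the lower bound $|X|\le\rank(\Gamma)$, I would apply Lemma~\ref{lemma: ir is upper bound for X} to $\Gamma$ (which is not algebraic, by hypothesis), to $R=\kb$, and to the generating subset $\iota(X)\subseteq\Gamma(\kb)$, where $\iota\colon X\to\Gamma(\kb)$ is the universal map of Theorem~\ref{theo: free pro-alg group}; since that theorem guarantees $\iota$ converges to $1$, the lemma yields $|\iota(X)|\le\rank(\Gamma)$. It then remains to see that $\iota$ is injective, so that $|\iota(X)|=|X|$. I would get this from non-triviality of $\Gamma$: writing $\Gamma=\varprojlim G_i$ as the projective limit over its $\CC$-group quotients, if every $G_i$ had trivial $\kb$-points then $\Gamma(\kb)=\varprojlim G_i(\kb)$ would be trivial, and since $\langle\iota(X)\rangle=\Gamma$ this would force $\Gamma=1$, contradicting that $\Gamma$ is not algebraic. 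Hence some $\CC$-group quotient $G_0$ of $\Gamma$ has a non-trivial $\kb$-point, and for distinct $x_1,x_2\in X$ one can, by the universal property of Theorem~\ref{theo: free pro-alg group}, find a morphism from $\Gamma$ to a $\CC$-group on which $\iota(x_1)$ and $\iota(x_2)$ have different images: take finitely many $\kb$-points generating a suitable $\CC$-group built from $G_0$ and distribute them over $x_1$, $x_2$ and finitely many further elements of $X$, sending all remaining elements to $1$.

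For the upper bound, I would observe that the set $\N$ of all coalgebraic subgroups of $\Gamma$ is a neighborhood basis at $1$: it is downward directed, because for coalgebraic $N_1,N_2\le\Gamma$ the group $\Gamma/(N_1\cap N_2)$ embeds into the algebraic group $\Gamma/N_1\times\Gamma/N_2$, so $N_1\cap N_2$ is again coalgebraic, and $\bigcap_{N\in\N}N=1$, because $k[\Gamma]$ is the directed union of its finitely generated Hopf subalgebras. The preceding lemma bounds $|\N|$ by $|\kb||X|$, so $\rank(\Gamma)\le|\N|\le|\kb||X|$. Finally, non-algebraicity forces $X\neq\emptyset$, and $\kb$ is infinite, so $|\kb||X|=\max(|\kb|,|X|)$; when $|X|\ge|\kb|$ this equals $|X|$, and the two bounds collapse to $\rank(\Gamma)=|X|$. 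All the genuine content lies in the two lemmas; granting those, I expect no real obstacle in the corollary itself, and the only step that takes a moment's thought is the injectivity of $\iota$, which is exactly what non-triviality of $\Gamma$ buys.
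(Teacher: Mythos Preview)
Your proposal is correct and matches the paper's approach exactly: the paper records the corollary with a bare \qed, treating it as immediate from Lemma~\ref{lemma: ir is upper bound for X} and the preceding lemma, together with elementary cardinal arithmetic. Your additional care about the injectivity of $\iota$ is a detail the paper suppresses; your sketch is right in spirit, though the phrase ``a suitable $\CC$-group built from $G_0$'' is vague---you can simply take $G=G_0$ itself, since any algebraic quotient of $\Gamma=\langle\iota(X)\rangle$ is already generated by finitely many $\kb$-points (only finitely many $\iota(x)$ have nontrivial image there), and then distribute those generators so that $x_1$ receives a nontrivial one and $x_2$ receives $1$. Alternatively, the universal property gives an action of the symmetric group of $X$ on $\Gamma$ permuting the $\iota(x)$ transitively, which for infinite $X$ forces $\iota$ to be injective (a non-injective $\iota$ would be constant, hence identically $1$ by convergence, contradicting $\Gamma\neq 1$).
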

Both extreme cases of inequality (\ref{eqn: inequ for rank}) may occur: If $X$ is an infinite set and $\CC$ is the formation of all abelian unipotent groups over an algebraically closed field of characteristic zero, then $\rank(\Gamma^\CC(X))=|X|$ by Example \ref{ex: free abelian unipotent}. On the other hand, if $\CC$ is the formation of all diagonalizable algebraic groups, then $\rank(\Gamma^\CC(X))=|\kb||X|$ by Example \ref{ex: free prodiagonalizable group}.

In some rare cases $\Gamma^\CC(X)$ can indeed be an algebraic group and then the inequality $|X|\leq\rank(\Gamma^\CC(X))$ from Corollary \ref{cor: inequality for rank of free group} may not hold. For example, if $k$ is an algebraically closed field of characteristic zero, $X$ a finite set and $\CC$ the formation of all abelian unipotent groups, then $\Gamma^\CC(X)=\Ga^n$, where $n=|X|$, by Example \ref{ex: free abelian unipotent}. Also, if $k$ is a field of positive characteristic and $\CC$ is the formation of all infinitesimal algebraic groups, then $\Gamma^\CC(X)=1$ for any set $X$ (since $G(\kb)=1$ for any infinitesimal algebraic group $G$). 
However, if $k$ has characteristic zero and $X$ is infinite, then $\Gamma^\CC(X)$ is not algebraic by Lemma \ref{lemma: free group not algebraic}. We thus obtain from Corollary \ref{cor: inequality for rank of free group}:

\begin{cor} \label{cor: rank of free is X}
    Let $k$ be a field of characteristic zero, $\CC$ a formation and $X$ a set with $|X|\geq |k|$. Then $\rank(\Gamma^\CC(X))=|X|$. \qed
\end{cor}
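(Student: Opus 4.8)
The plan is to combine the two-sided bound from Corollary \ref{cor: inequality for rank of free group} with the fact, supplied by Lemma \ref{lemma: free group not algebraic}, that $\Gamma^\CC(X)$ is not algebraic in characteristic zero when $X$ is infinite. First I would dispose of the trivial case: if $X$ is finite, then $|X| < |k|$ is excluded by hypothesis only when $|k|$ is finite, which cannot happen in characteristic zero; so the assumption $|X| \geq |k|$ together with $|k|$ infinite forces $X$ to be infinite. Hence Lemma \ref{lemma: free group not algebraic} applies and $\Gamma^\CC(X)$ is not an algebraic group, so Corollary \ref{cor: inequality for rank of free group} is available and gives
\[
|X| \leq \rank(\Gamma^\CC(X)) \leq |\kb|\,|X|.
\]
Now I would observe that $|\kb| = |k|$ for an infinite field $k$ (the algebraic closure is obtained by adjoining roots of the countably many coefficient-tuples of polynomials over $k$, so $|\kb| \leq |k| \cdot \aleph_0 = |k|$, and the reverse inequality is trivial). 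Since $|k| \leq |X|$ by hypothesis and $|X|$ is infinite, we get $|\kb|\,|X| = |k|\,|X| = |X|$ by the absorption law for infinite cardinals. Therefore both ends of the displayed inequality equal $|X|$, and $\rank(\Gamma^\CC(X)) = |X|$, as claimed.

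The only genuine content being invoked here is Lemma \ref{lemma: free group not algebraic}, whose proof in turn rests on Lemma \ref{lemma: finite generation} (finite generation of algebraic groups in characteristic zero); everything else is elementary cardinal arithmetic. So there is essentially no obstacle in the corollary itself — it is a short bookkeeping argument — and the reader should simply note that the hypothesis $|X| \geq |k|$ is exactly what is needed to collapse the factor $|\kb|$ in the upper bound. One should take a moment to confirm that the case distinction is handled correctly: the statement as phrased does not literally say $X$ is infinite, but in characteristic zero $|k| \geq \aleph_0$, so $|X| \geq |k|$ already forces infiniteness, and no separate finite-$X$ discussion is required.
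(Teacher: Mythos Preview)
Your argument is correct and follows exactly the route the paper takes: combine Lemma~\ref{lemma: free group not algebraic} (to ensure $\Gamma^\CC(X)$ is not algebraic) with Corollary~\ref{cor: inequality for rank of free group}, then collapse the upper bound using $|\kb|=|k|\leq |X|$. The paper records this as a one-line combination of those two results, leaving the cardinal arithmetic and the observation that $X$ must be infinite implicit; you have simply made those details explicit.
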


We record one more lemma for later use.

\begin{lemma} \label{lemma: ker has infinitely many elements}
    Let $\CC$ be a formation and let $X$ be an infinite set. If $\f\colon\Gamma^\CC(X)\twoheadrightarrow H$ is an epimorphism such that $\ir(H)<|X|$, then $|X|$ many elements of $X$ map into $\ker(\f)(\overline{k})$.
\end{lemma}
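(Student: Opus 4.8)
The plan is to count, for each coalgebraic subgroup $N$ of $\Gamma^\CC(X)$, how many elements of $X$ can fail to lie in $N(\kb)$, and then take an appropriate union. Write $\Gamma=\Gamma^\CC(X)$ and let $\iota\colon X\to\Gamma(\kb)$ be the universal map, which converges to $1$. Since $\f$ is an epimorphism onto an algebraic group, $\ker(\f)$ is a coalgebraic subgroup of $\Gamma$. Set $K=\ker(\f)$. The key observation is that, because $\rank(H)<|X|$, the group $H$ has ``few'' coalgebraic subgroups relative to $|X|$: more precisely, $H$ has a neighborhood basis $\N$ at $1$ of cardinality $\rank(H)<|X|$. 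Pulling back along $\f$, the groups $\{\f^{-1}(M)\mid M\in\N\}$ together with $K$ give a cofinal family of coalgebraic subgroups of $\Gamma$ lying inside no smaller set than $K$ — but this is not quite what I want; rather, I want to understand how the elements of $X$ distribute inside $K(\kb)$.

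The cleaner approach: consider the composition $\f\circ\iota\colon X\to H(\kb)$, which converges to $1$ by Lemma \ref{lemma: morphism and convergence to 1}. For each $M\in\N$, the set $X\smallsetminus(\f\circ\iota)^{-1}(M(\kb))$ is finite. Since $\bigcap_{M\in\N}M=1$, we have $(\f\circ\iota)^{-1}(1)=\bigcap_{M\in\N}(\f\circ\iota)^{-1}(M(\kb))$, so $X\smallsetminus(\f\circ\iota)^{-1}(1)=\bigcup_{M\in\N}\bigl(X\smallsetminus(\f\circ\iota)^{-1}(M(\kb))\bigr)$ is a union of $|\N|=\rank(H)<|X|$ finite sets, hence has cardinality at most $\rank(H)\cdot\aleph_0<|X|$ (using that $|X|$ is infinite). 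Therefore $|(\f\circ\iota)^{-1}(1)|=|X|$, since removing fewer than $|X|$ elements from an infinite set of cardinality $|X|$ leaves $|X|$ elements. But $(\f\circ\iota)^{-1}(1)$ is exactly the set of $x\in X$ with $\iota(x)\in\ker(\f)(\kb)=K(\kb)$. This gives the claim.

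The only point requiring a little care is the cardinal arithmetic: if $X$ is infinite and $S\subseteq X$ with $|S|<|X|$, then $|X\smallsetminus S|=|X|$ — this is standard, since $|X|=|S|+|X\smallsetminus S|=\max(|S|,|X\smallsetminus S|)$ forces $|X\smallsetminus S|=|X|$. One must also handle the degenerate possibility that $\N$ is empty or finite, i.e. $\rank(H)$ finite, meaning $H$ is algebraic; then $K=\ker(\f)$ is already coalgebraic and $\f\circ\iota$ converges to $1$ just says almost all $x\in X$ map to $1$ in $H(\kb)$, so again all but finitely many $x$ satisfy $\iota(x)\in K(\kb)$, and finitely many is $<|X|$. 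I do not expect any serious obstacle here; the main (mild) subtlety is simply to invoke the right neighborhood-basis characterization of $\rank$ and to phrase the union-of-finite-sets estimate correctly.
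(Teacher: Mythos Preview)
Your proposal is correct and follows essentially the same route as the paper: choose a neighborhood basis $\N$ at $1$ for $H$ of cardinality $\rank(H)$, observe that $X\smallsetminus\iota^{-1}(\ker(\f)(\kb))=\bigcup_{M\in\N}\bigl(X\smallsetminus\iota^{-1}(\f^{-1}(M)(\kb))\bigr)$ is a union of $\rank(H)<|X|$ finite sets, and conclude. Your first paragraph is a false start (in particular $H$ need not be algebraic, so $\ker(\f)$ need not be coalgebraic), but you correctly abandon it; the ``cleaner approach'' and the handling of the algebraic case match the paper's argument.
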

\begin{proof}
    If $\ir(H)$ is finite the statement is obvious, so let us assume that $\ir(H)$ is infinite.

    Let $\N$ be a neighborhood basis at $1$ for $H$ with $|\N|=\rank(H)$. Since $\ker(\f)=\bigcap_{N\in\N}\f^{-1}(N)$, we have
    $$X\smallsetminus\iota^{-1}(\ker(\f)(\overline{k}))=\bigcup_{N\in\N}(X\smallsetminus\iota^{-1}(\f^{-1}(N)(\overline{k}))),$$
    where $\iota\colon X\to\Gamma^\CC(X)(\overline{k})$ is the map from Theorem \ref{theo: free pro-alg group}.
    The right hand side has cardinality bounded by $|\N|=\ir(H)<|X|$. Thus $|X\cap\iota^{-1}(\ker(\f)(\overline{k}))|=|X|$.
\end{proof}

\subsection{The dimension of a proalgebraic group}

For our characterization of saturated pro-$\CC$-groups we will also need another measure for the size of a proalgebraic group that is different from the rank. Let $G$ be a proalgebraic group. Then the nilradical $\ida$ of $k[G^o]$ is a prime ideal and we can define the dimension $\dim(G)$ of $G$ as the transcendence degree over $k$ of the field of fractions of $k[G^o]_{\operatorname{red}}=k[G^o]/\ida=k[(G^o)_{\operatorname{red}}]$. If $G$ is an algebraic group this is simply the usual dimension of $G$.

\begin{lemma} \label{lemma: dim for subgroups and quotients}
    Let $G$ be a proalgebraic group. If $H$ is a closed subgroup or a quotient of $G$, then we have $\dim(H)\leq\dim(G)$.
    \end{lemma}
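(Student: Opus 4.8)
The plan is to reduce the statement to the already-established rank inequality (Lemma \ref{lemma: rank for subgroups and quotients}) by transferring everything to the level of identity components and reduced coordinate rings, where $\dim$ becomes a genuine transcendence degree. So the first step is to observe that, by definition, $\dim(G) = \trdeg_k\operatorname{Frac}(k[(G^o)_{\red}])$, and likewise for $H$; hence it suffices to produce a $k$-algebra inclusion or surjection relating $k[(H^o)_{\red}]$ and $k[(G^o)_{\red}]$ in the appropriate direction.

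For the subgroup case, let $H\leq G$ be a closed subgroup. First I would note that $H^o \leq G^o$ (the identity component of a closed subgroup is contained in that of the ambient group, since $H^o$ is connected and contains the identity), so it is enough to treat $H^o\leq G^o$, i.e. we may assume $G$ and $H$ are connected. Then $k[H] = k[G]/\I(H)$, and passing to reduced rings gives a surjection $k[G]_{\red} = k[G_{\red}] \twoheadrightarrow k[H_{\red}]$ of $k$-algebras, realizing $(H_{\red})$ as a closed subscheme of $G_{\red}$. Since both are integral affine schemes over $k$ (their coordinate rings are domains, the nilradicals of $k[G^o]$ and $k[H^o]$ being prime by hypothesis from the preceding discussion), a dominant-if-nothing-else closed immersion of integral schemes can only decrease the transcendence degree of the function field: $\trdeg_k\operatorname{Frac}(k[H_{\red}]) \leq \trdeg_k\operatorname{Frac}(k[G_{\red}])$, because a surjection of domains gives that the fraction field of the target is generated by the images of a transcendence basis of a subring, hence has transcendence degree at most that of the source. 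This yields $\dim(H)\leq\dim(G)$.

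For the quotient case, let $H = G/N$ be a quotient. Here $H^o = G^o/N^o$ (using that the identity component of a quotient is the image of the identity component, e.g. as recorded in the preliminaries where $G^o = \varprojlim G_i^o$ is compatible with the projective system), so again we may reduce to the connected case and assume $G$ connected with quotient $H = G/N$. Then $k[H] \hookrightarrow k[G]$ is an injection of Hopf algebras (one of the equivalent conditions for an epimorphism listed in the preliminaries: the dual map is injective), and this injectivity is preserved after passing to reduced rings provided we check that $k[H]\cap\ida = 0$ where $\ida$ is the nilradical of $k[G]$ — but $k[H]\cap\ida$ is a nil ideal of the domain $k[H]_{\red}$'s preimage, hence zero, so $k[H]_{\red}\hookrightarrow k[G]_{\red}$. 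An injection of domains gives $\trdeg_k\operatorname{Frac}(k[H]_{\red})\leq\trdeg_k\operatorname{Frac}(k[G]_{\red})$ directly. Hence $\dim(H)\leq\dim(G)$ in this case too.

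The main obstacle I anticipate is the bookkeeping around identity components and non-reducedness in positive characteristic: one must be careful that $\dim$ is defined via $k[G^o]_{\red}$ rather than $k[G]$, so every reduction step has to be checked to be compatible with both the operation $G\rightsquigarrow G^o$ and the operation $G\rightsquigarrow G_{\red}$, and with the fact that a closed subgroup or quotient of $G$ need not have its own reduced-identity-component sitting inside that of $G$ in a naive scheme-theoretic way. Once one confirms (i) $H^o\leq G^o$ for $H\leq G$ closed, (ii) $(G/N)^o = G^o/N^o$, and (iii) that $k[G]_{\red}$ is again a Hopf algebra representing $G_{\red}$ when $k$ is perfect — or more carefully argues with $(G^o)_{\red}$ directly, which is legitimate because its coordinate ring is a domain by the standing hypothesis that the nilradical of $k[G^o]$ is prime — the transcendence-degree comparison is entirely routine. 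I would expect the clean write-up to be just a few lines once these structural facts are invoked.
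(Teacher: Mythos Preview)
Your approach is essentially the same as the paper's: reduce to identity components, then compare transcendence degrees via the induced inclusion (quotient case) or surjection (subgroup case) of the reduced coordinate rings. Two minor corrections: first, your opening sentence about reducing to the rank inequality of Lemma~\ref{lemma: rank for subgroups and quotients} is a non sequitur --- you never use that lemma, and the argument is self-contained; second, the formula $(G/N)^o = G^o/N^o$ is not correct in general (the kernel of $G^o \twoheadrightarrow (G/N)^o$ is $G^o \cap N$, which can strictly contain $N^o$, e.g.\ $G=\Gm$, $N=\mu_n$ in characteristic zero), but the fact you actually need and correctly invoke --- that $G^o$ surjects onto $(G/N)^o$ --- is true, so the reduction to the connected case goes through.
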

\begin{proof}
    Let us first consider an epimorphism $G\twoheadrightarrow H$. Then we have an induced epimorphism $G^o\twoheadrightarrow H^o$.
 We therefore get inclusions $k[H^o]\subseteq k[G^o]$ and $k[H^o]_{\operatorname{\red}}\subseteq k[G^o]_{\operatorname{\red}}$. Thus $\dim(G)\geq\dim(H)$.

 If $H$ is a closed subgroup of $G$, then $H^o$ is a closed subgroup of $G^o$. We therefore have a surjection $k[G^o]\twoheadrightarrow k[H^o]$ that induces a surjection $k[G^o]_{\operatorname{red}}\twoheadrightarrow k[H^o]_{\operatorname{red}}$. Thus $\dim(H)\leq\dim(G)$.
\end{proof}

\begin{lemma} \label{lemma: dim for kernel zero dimensional}
    Let $\f\colon G\twoheadrightarrow H$ be an epimorphism of proalgebraic groups with $\dim(\ker(\f))=0$. Then we have $\dim(G)=\dim(H)$.
\end{lemma}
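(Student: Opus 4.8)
The statement is the exact analogue of Lemma~\ref{lemma: ir for algebraic kernel} with rank replaced by dimension and "algebraic kernel" relaxed to "zero-dimensional kernel". By Lemma~\ref{lemma: dim for subgroups and quotients} we already have $\dim(H) \le \dim(G)$, so the whole content is the reverse inequality $\dim(G) \le \dim(H)$. The plan is to reduce everything to the identity components $G^o$ and $(\ker\f)^o$: since $\dim$ only sees the identity component, and since $\f$ induces an epimorphism $G^o \twoheadrightarrow H^o$ whose kernel contains $(G^o \cap \ker\f)$, we may replace $G,H,\ker\f$ by their identity components and assume all three groups are connected. Note $\dim(\ker\f)=0$ means $k[(\ker\f)^o]_{\operatorname{red}}$ has transcendence degree $0$ over $k$, i.e. is algebraic over $k$; combined with connectedness of $(\ker\f)^o$ this forces $(\ker\f)^o$ to be (the spectrum of) a local ring with residue field a finite extension of $k$ — in particular $(\ker\f)^o$ is an algebraic group.

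First I would make the reduction to connected groups precise, using that $G^o = \varprojlim G_i^o$ and that $\dim$ is computed on the coordinate ring of the reduced identity component. Then, having reduced to the case $\ker\f$ connected with $\dim(\ker\f)=0$, I would argue that such a $\ker\f$ is an \emph{algebraic} group: a connected affine group scheme whose reduced part has dimension $0$ over $k$ is finite (its reduced coordinate ring is a connected finitely-generated-over-itself... more carefully, it is a directed union of finite-dimensional local Hopf subalgebras, hence itself finite dimensional). Once $\ker\f$ is algebraic, the lemma follows immediately from Lemma~\ref{lemma: ir for algebraic kernel}? No — that lemma concerns rank, not dimension. Instead I would invoke the analogue reasoning directly: by Corollary~\ref{cor: intersection 1} applied to the algebraic subgroup $\ker\f \le G$, there is a coalgebraic subgroup $M$ of $G$ with $M \cap \ker\f = 1$; then $\f|_M \colon M \to H$ is injective (scheme-theoretically), so $M$ embeds as a closed subgroup of $H$, giving $\dim(M) \le \dim(H)$. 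Finally, $G/M$ is algebraic and $M$ is a closed subgroup with $\dim(G/M) = 0$ would be too strong — rather I need that $\dim(G) = \dim(M)$, which holds because $G \twoheadrightarrow G/M$ has algebraic (hence zero-or-finite... no) kernel $M$? The roles are reversed. Let me instead use: $M \hookrightarrow G$ and $G \twoheadrightarrow G/(M\cap\ker\f)=G$, hmm.

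The cleaner route: once $\ker\f$ is algebraic, pick $M$ coalgebraic in $G$ with $M \cap \ker\f = 1$ (Corollary~\ref{cor: intersection 1}). The composite $M \hookrightarrow G \xrightarrow{\f} H$ has trivial kernel, hence is a closed immersion onto a closed subgroup $\f(M) \le H$, so $\dim(M) \le \dim(H) \le \dim(G)$. On the other hand $G/M$ is algebraic, and the quotient map $G \twoheadrightarrow G/M$ restricted to... — what I actually want is an epimorphism from $M$ (or a group containing $M$ with the same dimension) onto something of dimension $\dim(G)$. Since $M$ is coalgebraic, $G/M$ is a \emph{finite-dimensional} quotient; write $\dim(G) = \trdeg_k \operatorname{Frac}(k[G^o]_{\operatorname{red}})$ and observe $k[G^o]_{\operatorname{red}}$ is the directed union of the $k[(G/M')^o]_{\operatorname{red}}$ over coalgebraic $M'$, so $\dim(G) = \sup_{M'} \dim(G/M')$. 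For $M' \le M$, the map $M \to G/M'$ has kernel $M/M'$ which is an algebraic subgroup of the algebraic group $M$; but more to the point $M \twoheadrightarrow \f(M)/(\text{image})$... The honest main obstacle is bookkeeping the reduction to the algebraic case and then matching dimensions across the non-algebraic quotient; I expect the crux is establishing $\dim(G) = \dim(M)$ for a suitable coalgebraic complement $M$ to $\ker\f$, which should follow because $G = \langle M, \ker\f\rangle$ with $\ker\f$ zero-dimensional and $M \cap \ker\f = 1$, so that the inclusion $k[G^o/(\ker\f)^o] \hookrightarrow k[G^o]$ becomes an isomorphism after passing to reduced rings and fraction fields (the factor $k[(\ker\f)^o]_{\operatorname{red}}$ contributes no transcendence), and $k[G^o/(\ker\f)^o]_{\operatorname{red}} \hookrightarrow k[M^o]_{\operatorname{red}}$ via the injection $M \hookrightarrow G \twoheadrightarrow G/\ker\f$; chasing these inclusions gives $\dim(G) \le \dim(M^o) = \dim(M) \le \dim(H)$, completing the proof.
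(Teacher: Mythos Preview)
Your argument has a genuine gap: the claim that a connected proalgebraic group of dimension zero must be algebraic (equivalently, finite) is false. In positive characteristic, $\prod_{\mathbb{N}}\alpha_p$ is a connected, zero-dimensional proalgebraic group that is not algebraic. Even in characteristic zero your approach fails: take $G=H=\prod_{\mathbb{N}}\Gm$ with $\f$ squaring each coordinate; then $\ker(\f)=\prod_{\mathbb{N}}\mu_2$ has dimension zero but is not algebraic. Since Corollary~\ref{cor: intersection 1} requires the subgroup to be algebraic, you cannot produce your coalgebraic complement $M$, and the rest of the plan collapses. The attempted rescue in the final paragraph is circular: the assertion that ``$k[(\ker\f)^o]_{\operatorname{red}}$ contributes no transcendence'' so that $k[G^o/(\ker\f)^o]_{\operatorname{red}}\hookrightarrow k[G^o]_{\operatorname{red}}$ becomes an isomorphism of fraction fields is precisely the statement $\dim(G)=\dim(G/\ker\f)$ you are trying to prove; and the map $k[G^o/(\ker\f)^o]_{\operatorname{red}}\to k[M^o]_{\operatorname{red}}$ need not be injective since you do not know $M\ker(\f)=G$.

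The paper's approach avoids this pitfall by reducing to algebraic groups in a different way. After passing to identity components, take any $f\in k[G]$ and choose a coalgebraic $N\unlhd G$ with $f\in k[G/N]$. The induced epimorphism $G/N\twoheadrightarrow H/\f(N)$ of \emph{algebraic} groups has kernel $\ker(\f)N/N\simeq\ker(\f)/(\ker(\f)\cap N)$, which is a quotient of $\ker(\f)$ and hence zero-dimensional by Lemma~\ref{lemma: dim for subgroups and quotients}. For algebraic groups, a zero-dimensional kernel forces equal dimensions, so every element of $k[G/N]_{\operatorname{red}}$ is algebraic over the fraction field of $k[H/\f(N)]_{\operatorname{red}}\subseteq k[H]_{\operatorname{red}}$. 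Since $f$ was arbitrary, $\dim(G)\le\dim(H)$. The point is that one quotients $G$ by a coalgebraic subgroup rather than trying to split off the kernel.
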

\begin{proof}
    By Lemma \ref{lemma: dim for subgroups and quotients} the kernel of the induced epimorphism $G^o\twoheadrightarrow H^o$ also has dimension zero. We can thus assume that $G$ and $H$ are connected. It suffices to show that every element of $k[G]_{\operatorname{red}}$ is algebraic over the field of fractions of $k[H]_{\operatorname{red}}$.
    Let $f\in k[G]$. Then $f\in k[G/N]$ for some coalgebraic subgroup $N$ of $G$. The kernel $\ker(\f)N/N=\ker(\f)/(\ker(\f)\cap N)$ of the epimorphism $G/N\twoheadrightarrow H/\f(N)$ of algebraic groups has dimension zero by Lemma \ref{lemma: dim for subgroups and quotients}. So $G/N$ and $H/\f(N)$ have the same dimension. Therefore every element in $k[G/N]_{\operatorname{red}}$ is algebraic over the field of fractions of $k[H/\f(N)]_{\operatorname{red}}$. In particular, the image of $f$ in $k[G/N]_{\operatorname{red}}\subseteq k[G]_{\operatorname{red}}$ is algebraic over the field of fractions of $k[H]_{\operatorname{red}}$.
%
\end{proof}

\begin{lemma} \label{lemma: dim bounded by rank}
    Let $G$ be a proalgebraic group that is not algebraic. Then $\dim(G)\leq\rank(G)$.
\end{lemma}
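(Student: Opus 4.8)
The plan is to bound $\dim(G)$ by expressing both quantities in terms of the coordinate ring and exploiting that $k[G]$ is a directed union of finitely generated Hopf subalgebras indexed by a set of size $\rank(G)$. Since $G$ is not algebraic, $\rank(G)=\kappa$ is infinite, and by Proposition~\ref{prop: rank} we may write $k[G]=\bigcup_{i\in I} k[G/N_i]$ as a directed union over a set $I$ with $|I|=\kappa$, where each $G/N_i$ is an algebraic group. Passing to identity components, we likewise get $k[G^o]=\bigcup_{i\in I} k[(G/N_i)^o]=\bigcup_{i\in I}k[G^o/\f_i(N_i)]$, and hence, after quotienting by nilradicals, $k[G^o]_{\operatorname{red}}=\bigcup_{i\in I} k[(G^o/\f_i(N_i))_{\operatorname{red}}]$ is a directed union of the (reduced) coordinate rings of the connected algebraic groups $(G/N_i)^o_{\operatorname{red}}$.

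Next I would observe that each finitely generated subring $k[(G^o/\f_i(N_i))_{\operatorname{red}}]$ has transcendence degree over $k$ equal to $\dim\big((G/N_i)^o\big)$, which is a finite natural number. The field of fractions $K$ of $k[G^o]_{\operatorname{red}}$ is then the directed union of the fields of fractions $K_i$ of these subrings, so $\trdeg_k(K)=\sup_i \trdeg_k(K_i)$. Since each $\trdeg_k(K_i)$ is finite, this supremum is at most $|I|\cdot\aleph_0=\kappa$ (a countable sup of finite numbers being at most $\aleph_0\le\kappa$, and in general the union of $\kappa$ finite sets has size at most $\kappa$). Concretely: a transcendence basis of $K$ can be chosen inside $\bigcup_i \{$a fixed finite transcendence basis of $K_i\}$, which is a union of $\kappa$ finite sets, hence has cardinality at most $\kappa=\rank(G)$. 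Therefore $\dim(G)=\trdeg_k(K)\le\rank(G)$.

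The one point requiring a little care — the main (though minor) obstacle — is the interaction of "identity component" and "reduced" with the directed union: one must check that $k[G^o]=\bigcup_i k[(G/N_i)^o]$ (which follows from $G^o=\varprojlim (G/N_i)^o$, recorded in the preliminaries) and that taking nilradicals commutes with the directed union, i.e.\ that an element of $\bigcup_i k[(G/N_i)^o]$ is nilpotent iff it is nilpotent in some $k[(G/N_i)^o]$; this is immediate since nilpotency is witnessed in a single stage. Everything else is routine cardinal arithmetic and the standard fact that transcendence degree is the supremum of the transcendence degrees of a directed union of subalgebras.
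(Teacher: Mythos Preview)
Your ``concrete'' argument is correct: since every element of $K$ lies in some $K_i$, which is algebraic over $k(B_i)$, the field $K$ is algebraic over $k(\bigcup_i B_i)$, so $\bigcup_i B_i$ contains a transcendence basis of $K$; this union of $\kappa$ finite sets has cardinality at most $\kappa$. However, the sentence ``$\trdeg_k(K)=\sup_i \trdeg_k(K_i)$'' is simply false and should be deleted: take $K=k(x_i : i\in I)$ with $I$ uncountable, written as the directed union of $k(x_j:j\in J)$ over finite $J\subseteq I$; each piece has finite transcendence degree but $\trdeg_k K=|I|$. Keep only the union-of-bases argument.

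By comparison, the paper's proof is a one-liner via characterization~(i) of Proposition~\ref{prop: rank} rather than~(iii). After reducing to connected $G$ (using $\dim(G)=\dim(G^o)$ and $\rank(G^o)\leq\rank(G)$), one chooses a transcendence basis for the fraction field of $k[G]_{\operatorname{red}}$ inside $k[G]_{\operatorname{red}}$ and lifts it to $k[G]$; the lift is $k$-linearly independent, so $\dim(G)\leq\dim_k k[G]=\rank(G)$. This bypasses all the bookkeeping with directed unions, identity components commuting with limits, and nilradicals. Your route works, but the linear-independence observation is the cleaner argument.
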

\begin{proof}
As $\dim(G)=\dim(G^o)$ and $\rank(G^o)\leq\rank(G)$ we may assume that $G$ is connected. We can choose a transcendence basis for the fraction field of $k[G]_{\operatorname{red}}$ that lies inside $k[G]_{\operatorname{red}}$ and lift it to a subset of $k[G]$. This subset is $k$-linearly independent and therefore the $k$-dimension of $k[G]$ is at least the dimension of $G$.
\end{proof}

\begin{lemma} \label{lemma: dim for intersection}
    Let $G$ be a proalgebraic group and $(N_i)_{i\in I}$ a family of normal closed subgroup such that \mbox{$G/\cap N_i$} has positive dimension. Then there exist an $i\in I$ such that $G/N_i$ has positive dimension.
\end{lemma}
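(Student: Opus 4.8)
The plan is to prove the contrapositive: if $\dim(G/N_i)=0$ for every $i\in I$, then $\dim(G/N)=0$, where $N=\bigcap_{i\in I}N_i$. Write $A=k[G/N]$ and $A_i=k[G/N_i]$, viewed as Hopf subalgebras of $k[G]$. The first step is to record that $A$ is generated over $k$ by $\bigcup_{i\in I}A_i$; this is the same fact already used in the proof of Lemma~\ref{lemma: ir for intersection}, and it follows from $k[G/N_1]\cdot k[G/N_2]=k[G/(N_1\cap N_2)]$ together with the correspondence between Hopf subalgebras of $k[G]$ and normal closed subgroups of $G$: the $k$-subalgebra generated by $\bigcup_i A_i$ is again a Hopf subalgebra, hence of the form $k[G/N']$, and one checks $N'=N$.

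Next I would reformulate the hypothesis on each $G/N_i$. If $H$ is an algebraic group with $\dim(H)=0$, then $H_{\operatorname{red}}$ is a finite group scheme, so $k[H]_{\operatorname{red}}=k[H_{\operatorname{red}}]$ is a finite product of finite field extensions of $k$; in particular every element of $k[H]_{\operatorname{red}}$ is integral over $k$. Applying this with $H=G/N_i$: the image of $A_i$ in $A_{\operatorname{red}}$ is a reduced quotient of $A_i$, hence a homomorphic image of $k[G/N_i]_{\operatorname{red}}$, so it consists of elements integral over $k$. Since $A_{\operatorname{red}}$ is generated over $k$ by the images of the $A_i$ (by the generation statement above), the ring $A_{\operatorname{red}}=k[G/N]_{\operatorname{red}}$ is integral over $k$.

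Finally I would translate this back into the definition of dimension. The identity component $(G/N)^o$ is a closed subgroup of $G/N$, so $k[(G/N)^o]$ is a quotient of $A=k[G/N]$, and hence $k[(G/N)^o]_{\operatorname{red}}$ is a quotient of $k[G/N]_{\operatorname{red}}$; thus it is a domain (by connectedness of $(G/N)^o$, exactly as in the definition of $\dim$) which is integral over the field $k$, hence itself a field algebraic over $k$. Therefore $\dim(G/N)=\operatorname{trdeg}_k\operatorname{Frac}\!\big(k[(G/N)^o]_{\operatorname{red}}\big)=0$, which is the contrapositive of the claim. The argument is otherwise routine; the only point that needs a little care is the generation statement for the infinite intersection $N=\bigcap_i N_i$, which is why it is cleanest to invoke the Hopf-subalgebra correspondence rather than to argue directly with finite sub-intersections.
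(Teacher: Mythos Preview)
Your argument has a gap: when you write ``Applying this with $H=G/N_i$'', you are assuming that $G/N_i$ is an \emph{algebraic} group, but the $N_i$ are arbitrary normal closed subgroups, so $G/N_i$ is in general only proalgebraic. The conclusion you need---that every element of $k[G/N_i]_{\operatorname{red}}$ is integral over $k$---is still true, but requires one more step: write $k[G/N_i]$ as the directed union $\bigcup_M k[(G/N_i)/M]$ over the coalgebraic subgroups $M$ of $G/N_i$; by Lemma~\ref{lemma: dim for subgroups and quotients} each algebraic quotient $(G/N_i)/M$ has dimension zero, hence is a finite group scheme, so $k[(G/N_i)/M]$ is a finite-dimensional $k$-algebra and in particular integral over $k$. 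With this patch your proof goes through.

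The paper's proof sidesteps this detour by first reducing to the case where $G$ is connected, via $(G/\cap N_i)^o=G^o/\bigcap_i(G^o\cap N_i)$. Then each $G/N_i$ is connected as well, so $\dim(G/N_i)$ is literally the transcendence degree of the fraction field of $k[G/N_i]_{\operatorname{red}}$, and one concludes directly from the fact that a $k$-algebra generated by subalgebras algebraic over $k$ is itself algebraic over $k$---no passage through finite algebraic quotients is needed. Your handling of the identity component at the end is correct, but it is precisely the step that the paper's initial reduction renders unnecessary.
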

\begin{proof}
    Since $(G/\cap N_i)^o=G^o/\cap(G^o\cap N_i)$ we may assume that $G$ is connected. As $k[G/\cap N_i]$ is generated by the subalgebras $k[G/N_i]$, the same property holds after modding out by the nilradical. Thus $\dim(G/N_i)>0$ for some $i\in I$.
\end{proof}

\begin{lemma} \label{lemma: dim of fibre product}
    Let $\alpha\colon G\twoheadrightarrow H$ be an epimorphism of algebraic groups such that $\dim(\ker(\alpha))>0$ and let $I$ be a set. Then $\dim(\prod_I(G\twoheadrightarrow H))\geq |I|$.
\end{lemma}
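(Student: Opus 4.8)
The plan is to exhibit a closed subgroup of $\prod_I(G\twoheadrightarrow H)$ of dimension at least $|I|$, after which Lemma \ref{lemma: dim for subgroups and quotients} finishes the job. Put $N=\ker(\alpha)$. The product $\prod_I N$ of $|I|$ copies of $N$ embeds into $\prod_I(G\twoheadrightarrow H)$ as the kernel of the homomorphism $\prod_I(G\twoheadrightarrow H)\to H$, $(g_i)_{i\in I}\mapsto\alpha(g_i)$ (the $\alpha(g_i)$ all agree and are trivial exactly when each $g_i\in N$), so it is a closed subgroup. Hence it suffices to show $\dim(\prod_I N)\geq|I|$.

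Next I would unwind this dimension. Writing $\prod_I N=\varprojlim_J\prod_{j\in J}N$ over the finite subsets $J\subseteq I$, and using that $(\varprojlim G_\lambda)^o=\varprojlim G_\lambda^o$ together with the fact that the identity component of a finite product of algebraic groups is the product of the identity components, one obtains $(\prod_I N)^o=\prod_I N^o$, hence $k[(\prod_I N)^o]=\varinjlim_J\bigotimes_{j\in J}k[N^o]$. Since the nilradical commutes with filtered colimits of rings, $k[(\prod_I N)^o]_{\operatorname{red}}=\varinjlim_J\big(\bigotimes_{j\in J}k[N^o]\big)_{\operatorname{red}}$. So the goal reduces to producing, for each finite $J\subseteq I$, a family $(f_j)_{j\in J}$ in $\big(\bigotimes_{j\in J}k[N^o]\big)_{\operatorname{red}}$ that is algebraically independent over $k$: choosing these compatibly over all $J$ then yields $|I|$ algebraically independent elements of $k[(\prod_I N)^o]_{\operatorname{red}}$, so that $\dim(\prod_I N)=\trdeg_k\operatorname{Frac}\big(k[(\prod_I N)^o]_{\operatorname{red}}\big)\geq|I|$.

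The family itself I would build from a single transcendental element. Since $\dim(N)=\dim(N^o)>0$, the $k$-algebra $k[N^o]_{\operatorname{red}}$ (a domain, by the definition of dimension) contains an element $t$ transcendental over $k$; lift it to $f\in k[N^o]$. For finite $J$, let $f_j\in\bigotimes_{j\in J}k[N^o]$ be the image of $f$ under the $j$-th coprojection and $t_j$ its image under $\pi\colon\bigotimes_{j\in J}k[N^o]\twoheadrightarrow\bigotimes_{j\in J}k[N^o]_{\operatorname{red}}$, so $t_j$ is the image of $t$ under the $j$-th coprojection. Because $k$ is a field, the inclusion $k[t]\hookrightarrow k[N^o]_{\operatorname{red}}$ is split as a map of $k$-modules and $-\otimes_k-$ is exact, so $\bigotimes_{j\in J}k[t_j]\hookrightarrow\bigotimes_{j\in J}k[N^o]_{\operatorname{red}}$ is injective; thus $k[(t_j)_{j\in J}]$ is a polynomial subring of $\bigotimes_{j\in J}k[N^o]_{\operatorname{red}}$, in particular a domain. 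Now if $P$ is a polynomial over $k$ with $P\big((f_j)_{j\in J}\big)$ nilpotent in $\bigotimes_{j\in J}k[N^o]$, then applying the ring homomorphism $\pi$ makes $P\big((t_j)_{j\in J}\big)$ nilpotent; but it lies in the domain $k[(t_j)_{j\in J}]$, hence equals $0$, forcing $P=0$. Therefore the $f_j$ are algebraically independent in $\big(\bigotimes_{j\in J}k[N^o]\big)_{\operatorname{red}}$, as needed.

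I expect the only real obstacle to be the bookkeeping forced by non-reducedness over a possibly imperfect $k$: dimension is not obviously additive on products there, so one cannot simply say $\dim(\prod_I N)=|I|\cdot\dim(N)$. Routing the argument through one transcendental element $t$ and the exactness of $-\otimes_k-$ over the field $k$ circumvents this. The remaining ingredients — closedness of $\prod_I N$, compatibility of identity components with projective limits and finite products, and the commutation of the nilradical with filtered colimits — are routine.
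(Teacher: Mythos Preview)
Your argument is correct and follows exactly the same route as the paper: reduce to the closed subgroup $\prod_I\ker(\alpha)$ via Lemma~\ref{lemma: dim for subgroups and quotients}, then show $\dim(\prod_I\ker(\alpha))\geq|I|$. The paper dispatches the second step with the phrase ``follows rather directly from the definition,'' whereas you have carefully unpacked it --- computing the identity component, handling the nilradical via filtered colimits, and exhibiting the algebraically independent elements explicitly --- which is a welcome elaboration, especially given your own remark about the subtleties in positive characteristic over an imperfect base.
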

\begin{proof}
    Since $\prod_I(G\twoheadrightarrow H)$ contains the closed subgroup $\prod_{I}\ker(\alpha)$ it suffices to see that $\dim (\prod_{I}\ker(\alpha))\geq|I|$. But this follows rather directly from the definition.
\end{proof}

\subsection{Embedding problems}

We are now prepared to make precise the idea of a proalgebraic group that solves many embedding problems.

\begin{defi}
    An \emph{embedding problem} for a proalgebraic group $\Gamma$ consists of two epimorphisms $\alpha\colon G\twoheadrightarrow H$ and $\beta\colon \Gamma\twoheadrightarrow  H$ of proalgebraic groups. A \emph{solution} is an epimorphism $\f\colon\Gamma\twoheadrightarrow G$ such that $\beta=\alpha\f$, i.e.,
    \begin{equation}
    \label{eqn: embedding problem defi}
    \xymatrix{
        \Gamma \ar@{->>}^\beta[rd] \ar@{..>>}[d]\\
        G \ar@{->>}^-{\alpha}[r] & H
    }
    \end{equation}
    commutes. A morphism $\f\colon\Gamma\to G$ (that is not necessarily an epimorphism) such that $\beta=\alpha \f$ is called a \emph{weak solution}.
     We also refer to diagram (\ref{eqn: embedding problem defi}) as the embedding problem. If there exists a solution the embedding problem is called \emph{solvable}.
    The kernel of (\ref{eqn: embedding problem defi}) is $\ker(\alpha)$. If the kernel of (\ref{eqn: embedding problem defi}) is trivial, (\ref{eqn: embedding problem defi}) is called a \emph{trivial embedding problem}. (In this case, $\alpha^{-1}\beta$ is the unique solution.)
     The embedding problem  (\ref{eqn: embedding problem defi}) is \emph{algebraic} if $G$ (and therefore also $H$) are algebraic groups.
    %

Let $\CC$ be a formation and assume that $\Gamma$ is a pro-$\CC$-group. Then  (\ref{eqn: embedding problem defi}) is a \emph{pro-$\CC$-embedding problem} if $G$ is a pro-$\CC$-group.  If $G$ is in $\CC$, then (\ref{eqn: embedding problem defi}) is a \emph{$\CC$-embedding problem}.
\end{defi}


It is instructive to reformulate (\ref{eqn: embedding problem defi}) in  terms of Hopf algebras. Given the two inclusions $k[H]\subseteq k[G]$ and $k[H]\subseteq k[\Gamma]$, we would like to find an embedding of $k[G]$ into $k[\Gamma]$ over $k[H]$.

Roughly speaking, we are interested in pro-$\CC$-groups that solve as many pro-$\CC$-embedding problems as possible. There does not exist a pro-$\CC$-group $\Gamma$ such that all pro-$\CC$-embedding problems (\ref{eqn: embedding problem defi}) for $\Gamma$ are solvable. For example, if $H=\Gamma$, $\beta$ is the identity map and (\ref{eqn: embedding problem defi}) is non-trivial, then (\ref{eqn: embedding problem defi}) does not have a solution. Also, if $\rank(G)>\rank(\Gamma)$, there cannot exist an epimorphism $\Gamma\to G$ (Lemma~\ref{lemma: rank for subgroups and quotients}). It turns out that imposing the restrictions $\rank(H)<\rank(\Gamma)$ and $\rank(G)\leq\rank(\Gamma)$ is sufficient to allow for the existence of a pro-$\CC$-group that solves all such pro-$\CC$-embedding problems. Moreover, for every cardinal number $\kappa\geq |\kb|$, there exists a unique such pro-$\CC$-group $\Gamma$ with $\rank(\Gamma)=\kappa$ (Theorems \ref{theo: existence of saturated groups} and \ref{theo: uniqueness}).

The following theorem summarizes several well-known characterizations of profinite groups that solve many embedding problems. See \cite[Section 3.5]{RibesZalesskii:ProfiniteGroups} or \cite[Section 25.1]{FriedJarden:FieldArithmetic}.

\begin{theo} \label{theo: saturation for profinite groups}
    Let $\mathtt{C}$ be a formation of finite groups and $\mathtt{\Gamma}$ a pro-$\mathtt{C}$-group of infinite rank. We consider non-trivial pro-$\mathtt{C}$-embedding problems
    \begin{equation} \label{eqn: embedding problem profinite}
    \xymatrix{
        \mathtt{\Gamma} \ar@{->>}^\beta[rd] \ar@{..>>}[d]\\
        \mathtt{G} \ar@{->>}^-\alpha[r] & \mathtt{H}
    }
    \end{equation}
    for $\mathtt{\Gamma}$. The following statements are equivalent:
    \begin{enumerate}
        \item Every pro-$\mathtt{C}$-embedding problem (\ref{eqn: embedding problem profinite}) with $\rank(\mathtt{H})<\rank(\mathtt{\Gamma})$ and $\rank(\mathtt{G})\leq \rank(\mathtt{\Gamma})$ has a solution.
        \item Every pro-$\mathtt{C}$-embedding problem (\ref{eqn: embedding problem profinite}) with $\rank(\mathtt{H})<\rank(\mathtt{\Gamma})$ and finite kernel has a solution.
        \item  Every pro-$\mathtt{C}$-embedding problem (\ref{eqn: embedding problem profinite}) with $\rank(\mathtt{H})<\rank(\mathtt{\Gamma})$ such that $\ker(\alpha)$ is a finite minimal normal subgroup of $\mathtt{G}$ has a solution.
        \item Every $\mathtt{C}$-embedding problem (\ref{eqn: embedding problem profinite}) has $\rank(\mathtt{\Gamma})$ solutions.
    \end{enumerate}
\end{theo}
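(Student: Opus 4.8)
The strategy is to mimic the classical proof for profinite groups (as in \cite[Section 3.5]{RibesZalesskii:ProfiniteGroups}), checking that each step goes through in the proalgebraic setting with the rank and dimension tools developed above. The four conditions will be established by the cyclic chain of implications (i)$\Rightarrow$(ii)$\Rightarrow$(iii)$\Rightarrow$(i) together with (i)$\Leftrightarrow$(iv). The implications (i)$\Rightarrow$(ii)$\Rightarrow$(iii) are formal: a pro-$\CC$-embedding problem with finite kernel is in particular one with $\rank(\mathtt G)\leq\rank(\mathtt\Gamma)$ (since $\rank(\mathtt H)<\rank(\mathtt\Gamma)$ is infinite and the kernel is finite, using that rank of an extension by a finite group does not increase it, the analogue of Lemma~\ref{lemma: ir for algebraic kernel}), and a problem whose kernel is a finite minimal normal subgroup is a special case of one with finite kernel. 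The substantive directions are (iii)$\Rightarrow$(i) and (i)$\Rightarrow$(iv).

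For (iii)$\Rightarrow$(i), the plan is a two-stage reduction. First, given an embedding problem with $\rank(\mathtt G)\leq\rank(\mathtt\Gamma)=\kappa$ and $\rank(\mathtt H)<\kappa$, write $\mathtt G=\varprojlim \mathtt G/\mathtt M_\lambda$ along a chain indexed by an ordinal $\mu$ with $|\mu|\leq\kappa$ refining the given quotient $\mathtt G\twoheadrightarrow\mathtt H$ and with each successive quotient $\mathtt M_\lambda/\mathtt M_{\lambda+1}$ finite and (after a further refinement) a minimal normal subgroup of $\mathtt G/\mathtt M_{\lambda+1}$ — this is the profinite analogue of Proposition~\ref{prop: bound rank}. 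Then solve the embedding problem by transfinite induction along this chain: at successor steps apply (iii) to the one-step embedding problem $\mathtt G/\mathtt M_{\lambda+1}\twoheadrightarrow \mathtt G/\mathtt M_\lambda$ with the already-constructed epimorphism $\mathtt\Gamma\twoheadrightarrow \mathtt G/\mathtt M_\lambda$ (its kernel has rank $<\kappa$ because it is an extension of $\mathtt H$'s-kernel-type data by finitely many finite pieces — one must track that $\rank$ stays $<\kappa$, which is where Lemma~\ref{lemma: ir for intersection} enters), and at limit steps take the inverse limit of the partial solutions, which is again an epimorphism since a filtered limit of epimorphisms onto the $\mathtt G/\mathtt M_\lambda$ with compatible images is an epimorphism onto $\varprojlim$. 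Second, one must arrange that the one-step problems handed to (iii) are genuinely \emph{non-trivial}; the standard device is that if a step is trivial there is nothing to do, and minimality of $\mathtt M_\lambda/\mathtt M_{\lambda+1}$ guarantees that when it is non-trivial the kernel is a minimal normal subgroup.

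For (i)$\Rightarrow$(iv): given a $\mathtt C$-embedding problem (so $\mathtt G$, $\mathtt H$ are \emph{finite}), (i) already gives one solution $\f_0\colon\mathtt\Gamma\twoheadrightarrow\mathtt G$. To produce $\rank(\mathtt\Gamma)=\kappa$ many distinct solutions, the idea is: the kernel $\mathtt K=\ker(\f_0)$ has $\rank(\mathtt K)=\kappa$ (since $\mathtt G$ is finite, by the rank-of-extension-by-finite-kernel lemma applied to $\mathtt\Gamma\twoheadrightarrow\mathtt G$), and one twists $\f_0$ by $\kappa$-many pairwise "independent" homomorphisms $\mathtt\Gamma\to \ker(\alpha)$ factoring through $\mathtt K$; constructing these $\kappa$-many twists that remain solutions and are pairwise distinct uses that $\mathtt K$ surjects onto a product of $\kappa$ copies of some nontrivial quotient, which follows from the freeness-type consequences of (i) (solving many embedding problems forces $\mathtt K$ to be large in this sense). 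Alternatively one reduces (iv) to the profinite statement \cite[Prop. 3.5.9]{RibesZalesskii:ProfiniteGroups} after passing to $\mathtt\Gamma$ directly, since in (iv) everything in sight except $\mathtt\Gamma$ is finite.

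The main obstacle I expect is the bookkeeping in the transfinite induction for (iii)$\Rightarrow$(i): one must maintain simultaneously that the partial solution $\mathtt\Gamma\twoheadrightarrow \mathtt G/\mathtt M_\lambda$ is an epimorphism, that its kernel has rank $<\kappa$ so that the rank hypothesis on the \emph{next} one-step problem is met, and that limit steps preserve surjectivity; the rank control is exactly what Lemmas~\ref{lemma: ir for algebraic kernel} and~\ref{lemma: ir for intersection} are for, but assembling the chain with finite \emph{minimal}-normal successive quotients (the profinite refinement underlying the use of (iii) rather than merely (ii)) is the delicate combinatorial point, and it is essentially the only place where condition (iii) — as opposed to (ii) — is genuinely used.
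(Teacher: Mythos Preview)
The paper does not prove this theorem. It is stated as a known result, with a reference to \cite[Section 3.5]{RibesZalesskii:ProfiniteGroups} and \cite[Section 25.1]{FriedJarden:FieldArithmetic}, and serves purely as motivation for the proalgebraic generalization, Theorem~\ref{theo: saturated}. So there is nothing in the paper to compare your proposal against.

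Your proposal also conflates the two theorems. You speak of ``checking that each step goes through in the proalgebraic setting'' and invoke Lemma~\ref{lemma: ir for algebraic kernel}, Lemma~\ref{lemma: ir for intersection}, and Proposition~\ref{prop: bound rank}, all of which are about proalgebraic groups --- but Theorem~\ref{theo: saturation for profinite groups} is the classical \emph{profinite} statement, and requires no proalgebraic machinery at all. If your intent was actually to sketch a proof of Theorem~\ref{theo: saturated}, then your outline for (iii)$\Rightarrow$(i) is essentially what the paper does in Proposition~\ref{prop: from minimal kernel to arbitrary} via Lemma~\ref{lemma: prepare for minimal prop}, though the paper works with \emph{almost} minimal kernels rather than minimal ones (genuinely minimal normal subgroups need not exist in the proalgebraic setting, e.g.\ $\Gm$ has none), and this distinction matters.

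As a proof of the profinite statement itself, your plan is broadly on the right track but incomplete: you never address (iv)$\Rightarrow$(i), which is not trivial (one must show that having $\rank(\mathtt\Gamma)$ solutions to every \emph{finite} embedding problem forces solvability of embedding problems with $\rank(\mathtt H)$ arbitrarily large but $<\rank(\mathtt\Gamma)$), and your sketch of (i)$\Rightarrow$(iv) via ``twisting by $\kappa$-many homomorphisms into $\ker(\alpha)$'' does not explain why these twists are pairwise distinct, nor why such homomorphisms exist in the required abundance.
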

To state our generalization of Theorem \ref{theo: saturation for profinite groups} we need two more definitions.

\begin{defi}
    Let $N\neq 1$ be a normal closed subgroup of a proalgebraic group $G$ such that $N$ is an algebraic group. If $N$ is finite, $N$ is an \emph{almost minimal} normal subgroup of $G$ if
    for every normal closed subgroup $N'$ of $G$ with $N'\subseteq N$ either $N'=N$ or $N'=1$.
    If $N$ is not finite, $N$ is an \emph{almost minimal} normal subgroup of $G$
    if for every normal closed subgroup $N'$ of $G$ with $N'\subseteq N$ either $N'=N$ or $N'$ is finite.

    An embedding problem (\ref{eqn: embedding problem defi}) has \emph{almost minimal kernel}, if its kernel is an almost minimal normal subgroup of~$G$.
\end{defi}

\begin{defi}
    A family $(\f_i)_{i\in I}$ of solutions of (\ref{eqn: embedding problem defi}) is called \emph{independent} if the induced morphism $\prod_{i\in I}\f_i\colon \Gamma\to \prod_{i\in I} (G\twoheadrightarrow H)$ is an epimorphism.
\end{defi}

\begin{rem}
Since the coordinate ring of $\prod_{i\in I} (G\twoheadrightarrow H)$ is the union (direct limit) of tensor products $k[G]\otimes_{k[H]}\ldots\otimes_{k[H]} k[G]$, we see that the family $(\f_i)_{i\in I}$ is independent if and only if for every finite subset $\{i_1,\ldots,i_n\}$ of $I$ the morphism $k[G]\otimes_{k[H]}\ldots\otimes_{k[H]} k[G]\to k[\Gamma],\ a_1\otimes\ldots\otimes a_n\mapsto \f_{i_1}^*(a_1)\cdots\f^*_{i_n}(a_n)$ is injective. Thus, independence of the $\f_i$, corresponds to linear independence (over $k[H]=k[\Gamma/\ker(\beta)]$) of the $k[\Gamma/\ker(\f_i)]$ inside $k[\Gamma]$.
\end{rem}

\medskip

The main goal of this section is to prove the following theorem, which generalizes Theorem~\ref{theo: saturation for profinite groups} from finite groups to algebraic groups.

\begin{theo} \label{theo: saturated}
    Let $\CC$ be a formation and $\Gamma$ a pro-$\CC$-group of infinite rank. We consider non-trivial pro-$\CC$-embedding problems
        \begin{equation} \label{eqn: embedding problem thm}
    \xymatrix{
        \Gamma \ar@{->>}^\beta[rd] \ar@{..>>}[d]\\
        G \ar@{->>}^-\alpha[r] & H
    }
    \end{equation}
    for $\Gamma$. The following statements are equivalent:
    \begin{enumerate}
        \item Every pro-$\CC$-embedding problem (\ref{eqn: embedding problem thm}) with $\rank(H)<\rank(\Gamma)$ and $\rank(G)\leq \rank(\Gamma)$ has a solution.
        \item Every pro-$\CC$-embedding problem (\ref{eqn: embedding problem thm}) with $\rank(H)<\rank(\Gamma)$ and algebraic kernel has a solution.
        \item  Every pro-$\CC$-embedding problem (\ref{eqn: embedding problem thm}) with $\rank(H)<\rank(\Gamma)$ and almost minimal kernel has a solution.

        \item For every $\CC$-embedding problem (\ref{eqn: embedding problem thm}) and every normal closed subgroup $N$ of $\Gamma$ with $\rank(\Gamma/N)<\rank(\Gamma)$ and $N\subseteq\ker(\beta)$, there exists a solution $\f$ such that $\f(N)=\ker(\alpha)$.
        \item For every $\CC$-embedding problem (\ref{eqn: embedding problem thm}) and every normal closed subgroup $N$ of $\Gamma$ with $\rank(\Gamma/N)<\rank(\Gamma)$ and $N\subseteq\ker(\beta)$, there exists a solution $\f$ such that $\f(N)\neq 1$ if $\ker(\alpha)$ is finite and $\dim(\f(N))>0$ if $\dim(\ker(\alpha))>0$.
        \item For every $\CC$-embedding problem (\ref{eqn: embedding problem thm}) we have $\rank(\Gamma/N)=\rank(\Gamma)$ if $\ker(\alpha)$ is finite and we have $\dim(\Gamma/N)=\rank(\Gamma)$ if $\dim(\ker(\alpha))>0$, where $N$ denotes the intersection of all kernels of all solutions to (\ref{eqn: embedding problem thm}).
        \item For every $\CC$-embedding problem (\ref{eqn: embedding problem thm}) there exist $\rank(\Gamma)$ independent solutions.
    \end{enumerate}
\end{theo}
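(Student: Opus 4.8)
The plan is to establish the cycle of implications $(i)\Rightarrow(ii)\Rightarrow(iii)\Rightarrow(iv)\Rightarrow(v)\Rightarrow(vi)\Rightarrow(vii)\Rightarrow(i)$, mimicking the classical profinite proof (\cite[Section 3.5]{RibesZalesskii:ProfiniteGroups}) but paying attention to the two different ``sizes'' of a kernel: rank and dimension. The easy links are $(ii)\Rightarrow(iii)$ (a special case) and $(i)\Rightarrow(ii)$ (since an algebraic-kernel embedding problem has $\rank(G)=\rank(H)<\rank(\Gamma)$ by Lemma \ref{lemma: ir for algebraic kernel}). The real content is in four places, described below.

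First, $(iii)\Rightarrow(ii)$: given a pro-$\CC$-embedding problem with algebraic kernel $\ker(\alpha)$ and $\rank(H)<\rank(\Gamma)$, one filters $\ker(\alpha)$ by a finite subnormal series of $G$ whose successive quotients are almost minimal, and lifts one step at a time. The point is that for an algebraic normal subgroup $K\unlhd G$ one can always find a normal closed $K'\unlhd G$ with $1\neq K'\subseteq K$ that is almost minimal: if $K$ is finite use a minimal normal subgroup contained in it; if $K$ is infinite, pass to a proper algebraic normal subgroup of larger codimension and iterate — the connected component $K^o$ is normal in $G$, and a descending chain of normal algebraic subgroups of $G$ inside $K$ with strictly decreasing dimension must terminate, leaving either a finite group (handled as before) or an almost minimal one. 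Each intermediate quotient $G/K'$ still has rank $<\rank(\Gamma)$, so the hypothesis applies and the solutions compose. Then $(ii)\Rightarrow(i)$ reduces a general pro-$\CC$-embedding problem with $\rank(H)<\rank(\Gamma)$, $\rank(G)\le\rank(\Gamma)$ to the algebraic-kernel case by writing $\ker(\alpha)=\bigcap N_i$ with $G/N_i$ algebraic, transfinitely solving the problems $\Gamma\twoheadrightarrow G/(N_i\cap\ker\beta\text{-stuff})$... more precisely, by a transfinite induction over a well-ordering of a neighborhood basis of $\ker(\alpha)$, at each stage the kernel of the new one-step problem is algebraic, and at limits one takes the inverse limit of the partial solutions; Lemma \ref{lemma: ir for intersection} keeps the ranks of the intermediate $H$'s below $\rank(\Gamma)$, and the final map is an epimorphism because each stage was.

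Second, the refinements $(iv)$–$(vi)$. For $(ii)\Rightarrow(iv)$: given a $\CC$-embedding problem and $N\unlhd\Gamma$ with $\rank(\Gamma/N)<\rank(\Gamma)$, $N\subseteq\ker(\beta)$, form the fibre product $G\times_H(\Gamma/N)$; the projection to $\Gamma/N$ has kernel $\cong\ker(\alpha)$, and the composite $\Gamma\twoheadrightarrow\Gamma/N$ together with the obvious map gives a new embedding problem of rank $<\rank(\Gamma)$ with algebraic kernel, solvable by $(ii)$; unwinding, the resulting $\f\colon\Gamma\twoheadrightarrow G$ satisfies $\f(N)=\ker(\alpha)$. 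The implication $(iv)\Rightarrow(v)$ is immediate ($\f(N)=\ker(\alpha)$ forces $\f(N)\neq1$ resp.\ $\dim\f(N)>0$). For $(v)\Rightarrow(vi)$: let $N$ be the intersection of all solution kernels; if $\ker(\alpha)$ is finite and $\rank(\Gamma/N)<\rank(\Gamma)$, apply $(v)$ to this very $N$ to get a solution $\f$ with $\f(N)\neq1$, i.e.\ $N\not\subseteq\ker(\f)$, contradicting the definition of $N$ — so $\rank(\Gamma/N)=\rank(\Gamma)$; the dimension case is identical using Lemma \ref{lemma: dim for intersection} and $\dim(\Gamma/N)\le\rank(\Gamma)$ (Lemma \ref{lemma: dim bounded by rank}, noting $\Gamma/N$ is not algebraic once its dimension equals the infinite rank).

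Third, $(vi)\Rightarrow(vii)$: we must produce $\rank(\Gamma)=:\kappa$ independent solutions, i.e.\ $\f_i$ with $\prod_{i}\f_i\colon\Gamma\twoheadrightarrow\prod_i(G\twoheadrightarrow H)$ an epimorphism; equivalently the $k[\Gamma/\ker\f_i]$ are $k[H]$-linearly independent in $k[\Gamma]$. Build them by transfinite recursion of length $\kappa$: having chosen independent $(\f_j)_{j<\lambda}$ with $|\lambda|<\kappa$, the partial fibre product $P_\lambda=\prod_{j<\lambda}(G\twoheadrightarrow H)$ has rank $\le\max(|\lambda|,\rank(G))<\kappa$ by Example \ref{ex: rank for fibre product}; by $(vi)$ the kernel $N$ of all solutions of the \emph{original} embedding problem has $\rank(\Gamma/N)=\kappa$ (or $\dim=\kappa$), hence $N\not\subseteq\ker(\Gamma\twoheadrightarrow P_\lambda)$ whose quotient has rank/dimension $<\kappa$ — so there is a solution $\f_\lambda$ not factoring through $P_\lambda$, and by minimality of $\ker(\alpha)$ (almost minimality is what one reduces to, or one argues directly with the fibre product) $\f_\lambda$ is independent from the previous ones; a counting/cofinality argument over limit stages, together with the fact that any $k[H]$-linear dependence involves only finitely many factors, shows the whole family of length $\kappa$ is independent. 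Finally $(vii)\Rightarrow(i)$: restricting $(vii)$ to $\CC$-embedding problems with $\rank(H)<\rank(\Gamma)$ gives $(i)$ for those, and the general $(i)$ then follows by the same reduction ($\cap N_i$, transfinite induction) used for $(ii)\Rightarrow(i)$.

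I expect the main obstacle to be $(vi)\Rightarrow(vii)$, specifically keeping the bookkeeping of independence correct across limit ordinals: one needs that the partial products $P_\lambda$ remain of rank strictly below $\kappa$ (which is exactly where the hypothesis $\rank(H)<\rank(\Gamma)$ and Example \ref{ex: rank for fibre product} are used — without $\rank(H)<\rank(\Gamma)$ the fibre products could grow too fast), and that ``$\f_\lambda$ does not factor through $P_\lambda$'' genuinely yields \emph{independence} and not merely distinctness; this is where one invokes that $\ker(\alpha)$ (after the reduction in $(iii)$) is almost minimal, so that $\ker(\f_\lambda)\cap\ker(\Gamma\twoheadrightarrow P_\lambda)$ being a proper subgroup of $\ker(\f_\lambda)$ of the ambient type forces the amalgamated tensor product to inject. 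A secondary delicate point is the transfinite limit in $(ii)\Rightarrow(i)$: one must check that the inverse limit of epimorphisms along a well-ordered chain is again an epimorphism, which holds because coordinate rings are directed unions and injectivity is preserved under directed unions.
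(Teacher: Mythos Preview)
Your skeleton for the equivalences (i)--(v) matches the paper's: (i)$\Rightarrow$(ii)$\Rightarrow$(iii) are trivial, (iii)$\Rightarrow$(i) is the transfinite filtration by almost-minimal steps (the paper's Proposition~\ref{prop: from minimal kernel to arbitrary}), (ii)$\Rightarrow$(iv) is the fibre-product trick with $G\times_H\Gamma/N$, (iv)$\Rightarrow$(v) is immediate, and (v)$\Rightarrow$(iii) is the paper's Proposition~\ref{prop: from weak N condition to almost minimal}. So far so good.

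The real divergence is in how (vi) and (vii) are linked to the rest. You try to run the chain $(v)\Rightarrow(vi)\Rightarrow(vii)\Rightarrow(i)$; the paper instead proves $(i)\Rightarrow(vii)\Rightarrow(vi)\Rightarrow(v)$, and this direction is not an arbitrary choice. Your $(v)\Rightarrow(vi)$ in the positive-dimension case has a genuine gap: assuming for contradiction that $\dim(\Gamma/N)<\rank(\Gamma)$ does \emph{not} give $\rank(\Gamma/N)<\rank(\Gamma)$, which is what you would need to feed $N$ back into (v). The inequality only goes the wrong way ($\dim\le\rank$). The paper sidesteps this by proving $(vi)\Rightarrow(v)$: there one \emph{assumes} $\rank(\Gamma/N)<\rank(\Gamma)$ for a given $N$ and uses the dimension lemmas (\ref{lemma: dim for subgroups and quotients}, \ref{lemma: dim bounded by rank}, \ref{lemma: dim for kernel zero dimensional}, \ref{lemma: dim for intersection}) to locate a solution with $\dim(\f(N))>0$; no reverse inequality is needed.

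Your $(vi)\Rightarrow(vii)$ is also shakier than you indicate. Even granting the transfinite bookkeeping, the step ``$\f_\lambda$ does not factor through $P_\lambda$, hence is independent from the previous ones'' needs $\f_\lambda(M)=\ker(\alpha)$ (Remark~\ref{rem: equivalent conditions for f(N)=ker(alpha)}), not merely $\f_\lambda(M)\neq 1$. You note this requires almost-minimality of $\ker(\alpha)$, but (vii) is stated for \emph{arbitrary} $\CC$-embedding problems, and there is no evident reduction of (vii) to the almost-minimal case. The paper avoids the whole construction: for $(i)\Rightarrow(vii)$ it simply forms $G'=\prod_I(G\twoheadrightarrow H)$ with $|I|=\rank(\Gamma)$, observes $\rank(G')\le\rank(\Gamma)$, and applies (i) once to get $\Gamma\twoheadrightarrow G'$; the coordinate projections are then $\rank(\Gamma)$ independent solutions. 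Then $(vii)\Rightarrow(vi)$ is a direct computation with Lemmas~\ref{lemma: rank for subgroups and quotients}, \ref{lemma: dim for subgroups and quotients}, \ref{lemma: dim of fibre product}. I would reorganize your plan to follow this route.
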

The proof of Theorem \ref{theo: saturated} will be given at the end of this subsection. It consists of the following implications:

$$
\xymatrix{
    & {\rm (i)} \ar@{=>}^{\rm trivial}[d] \ar@/^7.0pc/@{=>}[ddddd] & \\
& {\rm (ii)} \ar@{=>}[rd] \ar@{=>}^-{\rm trivial}[ld] & \\
{\rm (iii)} \ar@{=>}^{{\rm Prop. \ref{prop: from minimal kernel to arbitrary}} }[ruu] & & {\rm (iv)} \ar@{=>}^-{\rm trivial}[ld] \\
& {\rm (v)}  \ar@{=>}^{\rm Prop. \ref{prop: from weak N condition to almost minimal}}[lu] & \\
& {\rm (vi)} \ar@{=>}[u] & \\
& {\rm (vii)} \ar@{=>}[u] &
}
 $$

 In \cite[Theorem~3.7]{BachmayrHarbaterHartmannWibmer:FreeDifferentialGaloisGroups} we translate the conditions from Theorem \ref{theo: saturated} to differential embedding problems. The reader may find it instructive to look at the meaning of the above conditions in this context.

 \begin{defi}
    Let $\CC$ be a formation. A pro-$\CC$-group of infinite rank satisfying the equivalent characterizations of Theorem \ref{theo: saturated} is called \emph{saturated}.
 \end{defi}

\begin{rem}
    A non-trivial pro-$\CC$-group $\Gamma$ of finite rank, i.e., a non-trivial $\CC$-group, cannot satisfy all conditions (i)-(vii) of Theorem \ref{theo: saturated}. For example, condition (i) (with $H=1$) implies that for any $G\in \CC$ and $n\geq 1$, the algebraic group $G^n$ is an epimorphic image of $\Gamma$. Thus the rank of $\Gamma$ must be infinite.
\end{rem}

It turns out that in characteristic zero, saturated pro-$\CC$-groups with rank not less than $|k|$ are free pro-$\CC$-groups in disguise (Theorem \ref{theo: free=saturated}). The following remark explains why we chose the expression ``saturated''.

 \begin{rem}
    Let $\mathtt{C}$ be a formation of finite groups. There exists a certain first order theory $T$ such that models of $T$ correspond to the class of profinite groups having a certain property (called the Iwasawa property in \cite{Chatzidakis:ModelTheoryOfProfiniteGroupsHavingTheIwasawProperty} and \cite{CherlinVanDenDriesMacinyre} and the embedding property in \cite{FriedJarden:FieldArithmetic}). Moreover, a pro-$\mathtt{C}$-group satisfies the equivalent conditions of Theorem \ref{theo: saturation for profinite groups} if and only if it corresponds to a saturated model of $T$. See \cite{Chatzidakis:ModelTheoryOfProfiniteGroupsHavingTheIwasawProperty} and \cite{CherlinVanDenDriesMacinyre}.
    It seems conceivable that there is some first order theory $T$ such that saturated pro-$\CC$-groups correspond to saturated models of $T$. In any case, the abelian groups $M$, such that $D(M)$ is a saturated pro-diagonalizable group, are exactly the saturated models of a certain first order theory (namely, the theory of divisible abelian groups with infinite $p$-torsion for every prime $p$). See Example \ref{ex: saturated prodiagonalizable group}.
 \end{rem}

\begin{rem} \label{rem: equivalent conditions for f(N)=ker(alpha)}
    Let (\ref{eqn: embedding problem thm}) be a pro-$\CC$-embedding problem for $\Gamma$ and $N$ a normal closed subgroup of $\Gamma$ with $N\subseteq \ker(\beta)$. If $\f$ is a solution to (\ref{eqn: embedding problem thm}), then $\f(N)\subseteq \ker(\alpha)$ and $N\ker(\f)\subseteq\ker(\beta)$. Moreover, for a solution $\f$, the following statements are equivalent:
    \begin{enumerate}
        \item The induced map $\Gamma\to G\times_H\Gamma/N$ is an epimorphism.
        \item $\f(N)=\ker(\alpha)$.
        \item $N\ker(\f)=\ker(\beta)$.
    \end{enumerate}
\end{rem}
\begin{proof}
(i)$\Rightarrow$(ii):
Let $R$ be a $k$-algebra and $g\in\ker(\alpha)(R)\leq G(R)$. Then $$(g,1)\in (G\times_H \Gamma/N)(R)=G(R)\times_{H(R)} (\Gamma/N)(R).$$
Since  $\Gamma\to G\times_H\Gamma/N$ is an epimorphism, there exists a faithfully flat $R$-algebra $S$ and a $\gamma\in \Gamma(S)$ such that $(\f(\gamma),\overline{\gamma})=(g,1)$. But then $\gamma\in N(S)$ and $\f(\gamma)=g$. So $\f(N)=\ker(\alpha)$ as desired.

(ii)$\Rightarrow$(i): Let $R$ be a $k$-algebra and $(g,h)\in (G\times_H\Gamma/N)(R)$. Then there exists a faithfully flat $R$-algebra $S$ and $\gamma\in \Gamma(S)$ such that $\gamma$ maps to $h$, i.e., $h=\overline{\gamma}$. We have $\alpha(g)=\beta(\gamma)$ and so $\alpha(\f(\gamma))=\beta(\gamma)=\alpha(g)$. Therefore $\f(\gamma)^{-1}g\in\ker(\alpha)(S)$.  As $\ker(\alpha)=\f(N)$, there exists a faithfully flat $S$-algebra $S'$ and $n\in N(S')$ with $\f(\gamma)^{-1}g=\f(n)$. But then $\gamma n$ maps to $(g,h)$ under $\Gamma\to G\times_H\Gamma/N$.

(iii)$\Rightarrow$(ii): If $N\ker(\f)=\ker(\beta)$, then $\f(N)=\f(N\ker(\f))=\f(\ker(\beta))=\ker(\alpha)$.

(ii)$\Rightarrow$(iii): If $\f(N)=\ker(\alpha)$, then $\f(N\ker(\f))=\f(N)=\ker(\alpha)$ and also $\f(\ker(\beta))=\ker(\alpha)$. Since both, $N\ker(\f)$ and $\ker(\beta)$, contain $\ker(\f)$, this implies  $N\ker(\f)=\ker(\beta)$.
\end{proof}

\begin{rem} While there is a clear correspondence between (i), (ii) and (iii) of Theorem \ref{theo: saturation for profinite groups} and Theorem \ref{theo: saturated}, the other characterizations in Theorem \ref{theo: saturated} (with maybe the exception of (vii)) do not look like (iv) of Theorem~\ref{theo: saturation for profinite groups} on the face of it. However, it is not hard to see that (vi) of Theorem \ref{theo: saturated} corresponds to (iv) of Theorem \ref{theo: saturation for profinite groups} in the case when $\CC=\mathtt{C}_k$ is a formation of finite constant algebraic groups obtained from a formation $\mathtt{C}$ of finite groups.

    Indeed, we will show that (vi) of Theorem \ref{theo: saturated} holds for a $\mathtt{C}_k$-embedding problem (\ref{eqn: embedding problem thm}) if and only if it has $\rank(\Gamma)$ solutions:

    We first note that two solutions of (\ref{eqn: embedding problem thm}) that have the same kernel, only differ by an automorphism of $G$. Since there are only finitely many such automorphisms, the number of kernels of solutions equals $\rank(\Gamma)$ if and only if there are $\rank(\Gamma)$ solutions. Moreover, if $N$ is the intersection of all solutions to (\ref{eqn: embedding problem thm}), then the finite intersections of kernels of solutions yield a neighborhood basis at $1$ for $\Gamma/N$. Now in an infinite profinite group the cardinality of a neighborhood basis at $1$ consisting of normal open ($\simeq$coalgebraic) subgroups equals the cardinality of all open subgroups (since there are only finitely many subgroups containing a given open normal subgroup). It follows that $\rank(\Gamma/N)=\rank(\Gamma)$ if and only if (\ref{eqn: embedding problem thm}) has $\rank(\Gamma)$ solutions.

%
%
%
%
\end{rem}

The following three lemmas are a preparation for the proof of (iii)$\Rightarrow$(i) in Theorem \ref{theo: saturated}. For a finite affine scheme $X$ over $k$, let us write $|X|$ for the vector space dimension $\dim_k(k[X])$ of the coordinate ring of $X$, i.e., the number of points of $X$ counted with multiplicities. For clarity of exposition we separated the following lemma from the proof of Lemma \ref{lemma: Hoelder type for algebraic}.
\begin{lemma} \label{lemma: bound size}
Let $G$ be an algebraic group. Then $|G/N|$ is bounded, as $N$ varies over all normal closed subgroups of $G$ with $\dim(N)=\dim(G)$.
\end{lemma}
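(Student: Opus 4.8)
The plan is to bound $|G/N|$ by the single quantity $|G/G^o_{\red}|$, which depends only on $G$. First reduce to the case $k=\kb$ (hence $k$ perfect): extending scalars to $\kb$ changes neither $|G/N|=\dim_k k[G/N]$ (since $\kb\otimes_k k[G/N]=\kb[G_{\kb}/N_{\kb}]$, quotients commuting with flat base change) nor the dimensions $\dim(G),\dim(N)$, and it carries the hypothesis $\dim(N)=\dim(G)$ over to $N_{\kb}\unlhd G_{\kb}$. So from now on assume $k$ algebraically closed.

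The crux is to show that the hypothesis $\dim(N)=\dim(G)$ forces $G^o_{\red}\subseteq N$. Indeed, $N^o$ is a connected closed subgroup of $G$ containing the identity, hence $N^o\subseteq G^o$; and $\dim(N^o)=\dim(N)=\dim(G)=\dim(G^o)$. Now $G^o_{\red}$ is an irreducible variety (a connected algebraic group over a field is irreducible), $N^o_{\red}$ is a closed subvariety of it, and the two have the same dimension, so $N^o_{\red}=G^o_{\red}$. In particular $G^o_{\red}\subseteq N$ as closed subschemes.

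Set $H:=G^o_{\red}$. Since $k$ is perfect, $H$ is a (smooth, connected) normal closed subgroup of $G$ --- this is exactly the fact used in the proof of Lemma \ref{lemma: generates reduced subgroup} that over a perfect field the reduced subscheme of a group scheme is a closed subgroup, together with its normality (it is characteristic, resp.\ one may cite the same references). Then $G/H$ is an algebraic group of dimension $\dim(G)-\dim(H)=0$, hence a finite group scheme, so $|G/H|=\dim_k k[G/H]$ is finite and depends only on $G$. Because $H\subseteq N$, the isomorphism theorems for proalgebraic groups give $G/N\simeq (G/H)/(N/H)$, so the canonical epimorphism $G\twoheadrightarrow G/N$ factors through $G/H$; the induced epimorphism $G/H\twoheadrightarrow G/N$ yields an inclusion $k[G/N]\subseteq k[G/H]$, whence $|G/N|\le |G/H|$. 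I expect the only point requiring care to be the normality of $G^o_{\red}$ over a perfect field, which however is a standard fact already invoked elsewhere in the paper; the remaining ingredients (compatibility of $\dim$, $|\cdot|$ and quotients with base change, irreducibility of connected group varieties, and the isomorphism theorems) are routine.
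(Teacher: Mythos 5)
Your strategy coincides with the paper's: reduce to $\kb$, show that $\dim(N)=\dim(G)$ forces $(G^o)_{\operatorname{red}}\subseteq N$, and bound $|G/N|$ by $|G_{\kb}/(G_{\kb}^o)_{\operatorname{red}}|$, a quantity depending only on $G$. The base change step, the containment $(G_{\kb}^o)_{\operatorname{red}}\subseteq N_{\kb}$, and the finiteness of the zero-dimensional quotient are all correct. The step that fails is the assertion that $H=(G^o)_{\operatorname{red}}$ is a \emph{normal} closed subgroup of $G$. Over a perfect (even algebraically closed) field of positive characteristic the reduced subscheme of an algebraic group is a closed subgroup, but it need not be normal, and this is not the fact invoked elsewhere in the paper (which only uses that $G_{\operatorname{red}}$ is a subgroup). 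A counterexample is $G=\alpha_p\rtimes\Gm$ with $\Gm$ acting on $\alpha_p$ by scaling: here $G$ is connected, $(G^o)_{\operatorname{red}}=G_{\operatorname{red}}=\Gm$, and conjugating $(0,t)$ by $(a,1)$ yields $(a(1-t),t)$, which does not lie in $\Gm(R)$ in general. So your appeal to the isomorphism theorems to write $G/N\simeq (G/H)/(N/H)$ is not justified.

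The gap is local and easily repaired, and the repair is in effect what the paper does: normality of $H$ is never needed. Since $H\subseteq N_{\kb}$, the Hopf subalgebra $\kb[G_{\kb}/N_{\kb}]=\kb[G_{\kb}]^{N_{\kb}}$ of $N_{\kb}$-invariant functions is contained in the algebra $\kb[G_{\kb}]^{H}$ of $H$-invariant functions, which is the coordinate ring of the quotient \emph{scheme} $G_{\kb}/H$; the latter is a zero-dimensional scheme of finite type, hence finite, so $|G/N|=|G_{\kb}/N_{\kb}|\leq |G_{\kb}/H|<\infty$, a bound depending only on $G$. (In characteristic zero the issue disappears entirely: there $G^o_{\operatorname{red}}=G^o$ is normal and $|G/G^o|$ is already a bound, exactly as in the paper.)
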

\begin{proof}
In characteristic zero $|G/G^o|$ is a bound. In positive characteristic, if $N\subseteq G$ with $\dim(N)=\dim(G)$, we can only conclude that $(G^o)_{\operatorname{red}}$ is contained in $N$. So $|G/N|= |G_{\overline{k}}/N_{\overline{k}}|\leq |G_{\overline{k}}/(G_{\overline{k}}^o)_{\operatorname{red}}|$. Here we passed to $\overline{k}$ to guarantee that the underlying reduced subscheme of a subgroup is a subgroup. Also note that $G_{\overline{k}}/(G_{\overline{k}}^o)_{\operatorname{red}}$ is a zero dimensional algebraic scheme and therefore is finite.
\end{proof}

\begin{lemma} \label{lemma: Hoelder type for algebraic}
    Let $G$ be a proalgebraic group and $N, \widetilde{N}$ normal closed subgroups of $G$ with $\widetilde{N}\subseteq N$ and such that $N/\widetilde{N}$ is algebraic. Then there exists a finite chain
    $$ N=N_0\geq N_1\geq \ldots\geq N_n=\widetilde{N}$$ of normal closed subgroups of $G$ such that $N_i/N_{i+1}$ is an almost minimal normal subgroup of $G/N_{i+1}$ for $i=0,\ldots,n-1$.
\end{lemma}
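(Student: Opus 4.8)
The plan is to build the chain by transfinite/ordinary induction on the structure of the quotient $N/\widetilde N$, using the fact that $N/\widetilde N$ is a Noetherian object: since $N/\widetilde N$ is algebraic, its coordinate ring is finitely generated, so the lattice of closed subgroups of $G/\widetilde N$ contained in $N/\widetilde N$ satisfies the descending chain condition. Thus it suffices to prove the following local statement: if $M$ is a nontrivial normal closed subgroup of a proalgebraic group $G$ such that $M$ is algebraic, then $M$ contains a normal closed subgroup $M'$ of $G$ with $M'\subsetneq M$ such that $M/M'$ is an almost minimal normal subgroup of $G/M'$ — and then one simply iterates, replacing $\widetilde N$-level data with $M'$, which terminates after finitely many steps by the DCC. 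Renaming, we may as well assume $\widetilde N = 1$ and work inside the algebraic group $G/\widetilde N$; the subgroups $N_i$ of $G$ are then recovered as preimages.

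So fix an algebraic group $Q = N/\widetilde N$ carrying an action of $G$ (equivalently, view everything inside the algebraic group $G/\widetilde N$). First I would split into two cases according to whether $\dim Q = 0$ or $\dim Q > 0$. If $\dim Q = 0$, then $Q$ is finite, and among the nonzero $G$-normal closed subgroups of $Q$ we may pick one, say $M_1$, that is minimal — such a minimal element exists because $|Q| < \infty$ — and $M_1$ is by definition an almost minimal normal subgroup of $G$ (in the finite case almost minimal just means minimal); then recurse on $Q/M_1$, which is again finite of strictly smaller order, so the process stops. If $\dim Q > 0$, I would look for a $G$-normal closed subgroup $M'\subseteq Q$ with $\dim M' = 0$ (i.e.\ $M'$ finite) such that $Q/M'$ has no proper infinite-codimension issue — more precisely, choose $M'$ maximal among the \emph{finite} $G$-normal closed subgroups of $Q$; since $Q$ is Noetherian such a maximal finite $M'$ exists, and then I claim $Q/M'$ is an almost minimal normal subgroup of $G/M'$: indeed if $M''/M'\subseteq Q/M'$ were a proper nonzero infinite $G$-normal closed subgroup, then $\dim(M''/M') \ge 1$ would be impossible to rule out directly, so instead one argues that any $G$-normal closed subgroup of $Q$ strictly between $M'$ and $Q$ must be finite (otherwise it is a candidate contradicting a suitable maximality), hence corresponds to the "finite" alternative in the definition of almost minimal. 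I would need to set up the maximality condition carefully — perhaps maximality of $M'$ among those finite $G$-normal subgroups $L\subseteq Q$ for which $Q/L$ still has the property that every proper $G$-normal closed subgroup containing the image of $M'$ is finite — and use the DCC to make the recursion terminate.

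The main obstacle I anticipate is the $\dim Q > 0$ case: unlike the finite case, "almost minimal" is not simply "minimal," and one must produce a subgroup $M'$ such that the quotient is almost minimal rather than merely reducing dimension. The cleanest route is probably: by Noetherianity, choose $M'$ to be a $G$-normal closed subgroup of $Q$ that is \emph{maximal among those with $\dim M' < \dim Q$}; then $Q/M'$ has the property that every proper nonzero $G$-normal closed subgroup of it has dimension $0$ (else its preimage in $Q$ would be a strictly larger $G$-normal subgroup of dimension $< \dim Q$, or equal to $Q$), which is exactly almost minimality of $Q/M'$ in $G/M'$. Then I recurse on $M'$, whose dimension has dropped; after finitely many dimension drops we reach a finite group and finish with the $\dim = 0$ case above. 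Throughout I would lean on Lemma~\ref{lemma: dim for subgroups and quotients} (dimension is monotone under subgroups and quotients) and on the correspondence between Hopf subalgebras and normal closed subgroups to guarantee the chain conditions; Lemma~\ref{lemma: bound size} is presumably the tool the authors use to control the finite part and ensure the overall chain is finite, which suggests they interleave the two cases rather than handling all dimension drops first.
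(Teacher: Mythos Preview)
Your overall strategy (peel off an almost-minimal top quotient, then recurse) matches the paper's, but the key construction in the positive-dimension case does not go through. The step ``choose $M'$ maximal among the $G$-normal closed subgroups of $Q$ with $\dim M' < \dim Q$'' can fail: closed subgroups of an algebraic group satisfy DCC (your Noetherianity remark is fine for that direction), but they do \emph{not} satisfy ACC, so a maximal such $M'$ need not exist. The paper's own example $G=N=\Gm$, $\widetilde N=1$ already exhibits this: the $G$-normal subgroups of $\Gm$ of dimension $<1$ are exactly the $\mu_n$, and the chain $\mu_2\subsetneq\mu_4\subsetneq\mu_8\subsetneq\cdots$ has no maximal element. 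Secondly, even in situations where a maximal $M'$ happens to exist, your almost-minimality argument misses a case: if $M'\subsetneq L\subsetneq Q$ with $\dim L=\dim Q$ (e.g.\ $L=Q^o$ when $Q$ is disconnected), then $L$ does not contradict maximality of $M'$, yet $L/M'$ is a proper $G$-normal subgroup of $Q/M'$ of dimension $\dim Q-\dim M'>0$, so $Q/M'$ is not almost minimal.

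The paper avoids both problems by maximizing the \emph{integer} $\dim(N_1/\widetilde N)$ rather than the subgroup $N_1$ itself (an integer maximum always exists), and then splitting on whether $\dim(N/N_1)>0$. If so, maximality of the dimension forces every strictly intermediate $G$-normal subgroup to have the same dimension as $N_1$, hence finite quotient over $N_1$, giving almost minimality; and $\dim(N_1/\widetilde N)$ has strictly dropped, so induction applies. If instead $\dim(N/N_1)=0$, one cannot conclude almost minimality directly --- this is precisely where Lemma~\ref{lemma: bound size} enters: it bounds $|N/N'|$ over all $G$-normal $N'$ with $\dim(N/N')=0$, so one may pick $\overline N$ realizing the maximal such order, bridge $N\geq\overline N$ by the already-established finite case, and observe that any $G$-normal subgroup strictly below $\overline N$ must now drop in dimension. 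Your closing remark correctly guessed that the two cases are interleaved via Lemma~\ref{lemma: bound size}; that interleaving is not optional but is exactly what repairs the two gaps above.
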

\begin{proof}
    Assume first that $N/\widetilde{N}$ is finite. If there is no normal closed subgroup $N'$ of $G$ that strictly contains $\widetilde{N}$ and is strictly contained in $N$, then $N/\widetilde{N}$ is an almost minimal normal subgroup of $G/\widetilde{N}$ and we can choose $n=1$. So let us assume that such an $N'$ exists. In this case, we may choose an $N'$ such that $|N'/\widetilde{N}|$ is maximal (i.e., $|N/N'|$ is minimal) and call it $N_1$.
%
    Then $N/N_1$ is an almost minimal normal subgroup of $G/N_1$. Since $|N_1/\widetilde{N}|<|N/\widetilde{N}|$ we may conclude by induction on $|N/\widetilde{N}|$.

    Let us now consider the general case. We proceed by induction on $\dim(N/\widetilde{N})$. The base case $\dim(N/\widetilde{N})=0$ was just handled above. So we may assume that $\dim(N/\widetilde{N})>0$. As above, if there is no normal closed subgroup of $G$ that strictly contains $\widetilde{N}$ and is strictly contained in $N$ we may choose $n=1$.
    Otherwise, among all normal closed subgroups $N'$ of $G$ that are strictly contained in $N$ and strictly contain $\widetilde{N}$, choose one such that $\dim(N'/\widetilde{N})$ is maximal (i.e., $\dim(N/N')$ minimal) and call it $N_1$. If $\dim(N/N_1)>0$, then $N/N_1$ is not finite and an almost minimal normal subgroup of $G/N_1$, because a normal closed subgroup of $G/N_1$ that is strictly contained in $N/N_1$ must have dimension zero and is therefore a finite subgroup. As $\dim(N_1/\widetilde{N})<\dim(N/\widetilde{N})$ we conclude by induction on $\dim(N/\widetilde{N})$.

    If $\dim(N/N_1)=0$ we proceed as follows: Since $|N/N'|$ is bounded, as $N'$ varies over all normal closed subgroups of $G$ with $N\supseteq N'\supseteq\widetilde{N}$ and $\dim(N/N')=0$ (Lemma \ref{lemma: bound size} applied to $G=N/\widetilde{N}$), we may choose one such that $|N/N'|$ is maximal. Let us call this group $\overline{N}$. Since $N/\overline{N}$ is finite, we know from the first part of this proof that there exists a sequence
        $$N=N_0\geq N_1\geq \ldots\geq N_m=\overline{N}$$
    as in the statement of the lemma. Now if $N'$ is a normal closed subgroup of $G$ strictly contained in $\overline{N}$ and containing $\widetilde{N}$, then $\dim(N')<\dim(\overline{N})$ since otherwise the maximality of $|N/\overline{N}|$ would be contradicted. Considering the normal closed subgroups $\overline{N}\supseteq\widetilde{N}$ of $G$ we may conclude by induction since this case was already treated above.
\end{proof}

It is not possible to strengthen ``almost minimal'' to ``minimal'' in the above lemma. For example, consider $G=\Gm=N$ and $\widetilde{N}=1$.

\begin{lemma} \label{lemma: prepare for minimal prop}
    Let $G$ be a proalgebraic group and $N$ a normal closed subgroup. Then there exists an ordinal number $\mu$ and a chain $$N=N_0\geq N_1\geq \ldots \geq N_\lambda\geq \ldots \geq N_\mu=1$$
    of normal closed subgroups $N_\lambda$ ($\lambda\leq \mu$) of $G$ such that
    \begin{enumerate}
        \item $N_{\lambda+1}=N_{\lambda}$  or $N_\lambda/N_{\lambda+1}$ is an almost minimal normal closed subgroup of $G/N_{\lambda+1}$ for $\lambda<\mu$,
        \item $N_\lambda=\bigcap_{\lambda'<\lambda}N_{\lambda'}$ if $\lambda$ is a limit ordinal and
        \item if $G$ has infinite rank and $\rank(G/N)<\rank(G)$, then $\rank(G/N_\lambda)<\rank(G)$ for $\lambda<\mu$.
    \end{enumerate}
\end{lemma}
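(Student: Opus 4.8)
I would build the chain by transfinite recursion, at each stage "peeling off" an almost minimal layer using Lemma \ref{lemma: Hoelder type for algebraic}, and at limit stages taking intersections. The only non-obvious point is bookkeeping: I need to guarantee the recursion actually terminates at $1$, and I need to propagate condition (iii) (the rank bound) through all three stage types (successor steps where something is peeled off, "stutter" successor steps $N_{\lambda+1}=N_\lambda$, and limit steps).

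\medskip

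\textbf{Construction.} Fix a neighborhood basis $\N=\{U_i\mid i<\nu\}$ at $1$ for $G$ (so $\bigcap_i U_i=1$), where $\nu$ is an ordinal with $|\nu|=\rank(G)$; in the algebraic case take $\N=\{1\}$. Define a chain indexed by $\lambda\leq\mu$ (with $\mu$ to be determined) by recursion. Set $N_0=N$. Given $N_\lambda\neq 1$, let $i(\lambda)$ be the least index $i<\nu$ with $U_i\cap N_\lambda\subsetneq N_\lambda$ (such $i$ exists since $\bigcap_i U_i=1\neq N_\lambda$). Then $N_\lambda/(U_{i(\lambda)}\cap N_\lambda)$ embeds into $G/U_{i(\lambda)}$, hence is algebraic; apply Lemma \ref{lemma: Hoelder type for algebraic} to the pair $U_{i(\lambda)}\cap N_\lambda\subseteq N_\lambda$ inside $G$ to obtain a finite chain of normal closed subgroups of $G$
$$N_\lambda=M_0\geq M_1\geq\cdots\geq M_n=U_{i(\lambda)}\cap N_\lambda$$
with each $M_j/M_{j+1}$ almost minimal in $G/M_{j+1}$. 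Splice this finite chain in (reindexing the ordinals accordingly — formally one does the recursion so that between $N_\lambda$ and the next "marked" term $N_{\lambda'}:=U_{i(\lambda)}\cap N_\lambda$ one inserts $n-1$ intermediate terms). If at some stage $n=0$, i.e. $U_{i(\lambda)}\cap N_\lambda=N_\lambda$ would be forced — this cannot happen by choice of $i(\lambda)$ — but to be safe one allows the "stutter" clause $N_{\lambda+1}=N_\lambda$ in (i) to absorb any degenerate step. At a limit ordinal $\lambda$, set $N_\lambda=\bigcap_{\lambda'<\lambda}N_{\lambda'}$, which forces (ii). The recursion continues as long as $N_\lambda\neq 1$; it must stop because each "marked" step strictly decreases the least index $i$ with $U_i\cap N_\lambda\subsetneq N_\lambda$ — wait, more carefully: after passing to $N_{\lambda'}=U_{i(\lambda)}\cap N_\lambda$ we have $U_{i(\lambda)}\supseteq N_{\lambda'}$, so the least such index strictly increases; since it is an ordinal $<\nu$ and limit stages only intersect, after $\leq\nu$ marked stages (each of finite length) we reach a term contained in every $U_i$, hence equal to $1$. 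Thus some ordinal $\mu$ with $N_\mu=1$ is reached, and $|\mu|\leq|\nu|\cdot\aleph_0=\rank(G)$ (or $\mu$ finite in the algebraic case).

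\medskip

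\textbf{Propagating the rank bound (iii).} Assume $\rank(G/N)<\rank(G)$; I claim $\rank(G/N_\lambda)<\rank(G)$ for all $\lambda<\mu$ by transfinite induction. The base case $\lambda=0$ is the hypothesis. At a successor $\lambda+1$: if $N_{\lambda+1}=N_\lambda$ this is immediate; otherwise $N_\lambda/N_{\lambda+1}$ is algebraic (being almost minimal, in particular an algebraic group), so $G/N_{\lambda+1}\twoheadrightarrow G/N_\lambda$ has algebraic kernel and Lemma \ref{lemma: ir for algebraic kernel} gives $\rank(G/N_{\lambda+1})=\rank(G/N_\lambda)<\rank(G)$. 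At a limit $\lambda$: we have $N_\lambda=\bigcap_{\lambda'<\lambda}N_{\lambda'}$ with $|\{\lambda'<\lambda\}|=|\lambda|\leq|\mu|\leq\rank(G)$; if $\rank(G)$ is infinite, then since each $\rank(G/N_{\lambda'})<\rank(G)$ we may pick a cardinal $\kappa<\rank(G)$ — careful: the $\rank(G/N_{\lambda'})$ need not be uniformly bounded below $\rank(G)$ when $\rank(G)$ is a limit cardinal. To handle this cleanly I would instead observe directly via Proposition \ref{prop: rank}(i): $k[G/N_\lambda]$ is generated as a $k$-algebra by $\bigcup_{\lambda'<\lambda}k[G/N_{\lambda'}]$, hence has $k$-dimension at most $|\lambda|\cdot\sup_{\lambda'<\lambda}\rank(G/N_{\lambda'})$. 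The first factor is $\leq\rank(G)$; and — here is the real content — the \emph{marked} terms $N_{\lambda'}$ arise as intersections of finitely many of the $U_i$, of which there are at most $|\nu|=\rank(G)$, so actually the number of distinct marked terms below $\lambda$ is $<\rank(G)$ when $\rank(G)$ is infinite and $\lambda<\mu$ (since reaching $\mu$ consumed all of $\nu$). Combining, $\rank(G/N_\lambda)<\rank(G)$. In the case $\rank(G)$ finite (algebraic $G$), $\mu$ is finite and there is nothing limit-like to check.

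\medskip

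\textbf{Main obstacle.} The delicate step is precisely the limit-ordinal case of (iii) when $\rank(G)$ is a limit cardinal: one cannot simply say "a bounded sup of things below a cardinal stays below it." The fix, as sketched, is to track that the index set governing the chain has size exactly $\rank(G)$ and that any proper initial segment of the construction has consumed $<\rank(G)$ of it (this is where it matters that the chain terminates exactly when all of $\nu$ is exhausted, and that each marked term is a finite Boolean combination of the $U_i$), then feed this into Lemma \ref{lemma: ir for intersection}. Everything else — existence of the almost minimal layers, the limit clause, termination — is routine given Lemmas \ref{lemma: Hoelder type for algebraic}, \ref{lemma: ir for algebraic kernel} and \ref{lemma: ir for intersection}.
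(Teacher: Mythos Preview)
Your construction is essentially the paper's: index a neighborhood basis by an ordinal, take successive intersections with $N$, and refine each algebraic step via Lemma~\ref{lemma: Hoelder type for algebraic}. Termination and conditions (i), (ii) go through as you say.

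The gap is in (iii). You correctly flag the obstruction at limit $\lambda$ when $\rank(G)$ is a limit cardinal, but your fix does not close it. You need two things simultaneously: that the number of terms below $\lambda$ is strictly less than $\rank(G)$, \emph{and} a uniform bound on the $\rank(G/N_{\lambda'})$ below $\rank(G)$, in order to feed Lemma~\ref{lemma: ir for intersection}. Your sentence ``the number of distinct marked terms below $\lambda$ is $<\rank(G)$ \dots (since reaching $\mu$ consumed all of $\nu$)'' presupposes that $\nu$ was chosen to be a \emph{cardinal} (initial ordinal), which you did not stipulate; with $\nu$ merely an ordinal of cardinality $\rank(G)$, a proper initial segment can have cardinality $\rank(G)$. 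Even granting that, you still have not produced the uniform bound on the $\rank(G/N_{\lambda'})$ needed by Lemma~\ref{lemma: ir for intersection}; your dimension estimate $|\lambda|\cdot\sup_{\lambda'<\lambda}\rank(G/N_{\lambda'})$ just restates the problem.

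The paper sidesteps the whole issue by invoking Proposition~\ref{prop: bound rank} directly, not Lemma~\ref{lemma: ir for intersection}. It first takes (via that proposition) a chain $G=M_0\geq\cdots\geq M_\nu=N$ with $|\nu|=\rank(G/N)$, then concatenates it with the chain $N=N_0\geq\cdots\geq N_\lambda$; the concatenated chain has ordinal type $\nu+\lambda$ and still has algebraic successive quotients and intersection at limits, so Proposition~\ref{prop: bound rank} gives $\rank(G/N_\lambda)\leq|\nu+\lambda|=\max(|\nu|,|\lambda|)$. Choosing $\mu$ as the \emph{smallest} ordinal of its cardinality makes $\mu$ a cardinal, so $\lambda<\mu$ forces $|\lambda|<|\mu|\leq\rank(G)$; together with $|\nu|=\rank(G/N)<\rank(G)$ this yields (iii) with no case split on regularity of $\rank(G)$. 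That single application of Proposition~\ref{prop: bound rank} to the concatenated chain is the missing idea in your argument.
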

\begin{proof}
If $N$ is algebraic, the claim follows from Lemma \ref{lemma: Hoelder type for algebraic}. So let us assume that $N$ is not algebraic. Let $\mathcal{N}$ denote a neighborhood basis at $1$ of $G$ with $|\mathcal{N}|=\rank(G)$. According to Corollary \ref{cor: neighborhood basis for subgroup} the set $\mathcal{N}(N)=\{N\cap N'|\ N'\in\mathcal{N}\}$ is a neighborhood basis at $1$ for $N$. Let $\mu$ be the smallest ordinal number such that $|\mu|=|\mathcal{N}(N)|$. So we may write $\mathcal{N}(N)=\{U_\lambda|\ \lambda<\mu\}$. We set $N_0=N$ and for $1\leq\lambda\leq\mu$ we set $N_\lambda=\bigcap_{\lambda'<\lambda} U_\lambda$. Clearly the $N_\lambda$'s form a descending chain of normal closed subgroups of $G$ such that $N_0=N$ and $N_\mu=1$. Moreover $N_\lambda/N_{\lambda+1}=N_\lambda/(N_\lambda\cap U_{\lambda+1})=U_{\lambda+1}N_\lambda/U_{\lambda+1}$ is algebraic, because $U_{\lambda+1}N_\lambda/U_{\lambda+1}$ embeds into $N/U_{\lambda+1}$.
If $N_\lambda/N_{\lambda+1}$ is not an almost minimal normal closed subgroup of $G/N_{\lambda+1}$, we can use Lemma \ref{lemma: Hoelder type for algebraic} to insert finitely many subgroups between $N_\lambda$ and $N_{\lambda+1}$ to achieve (i). On the other hand, (ii) is satisfied by construction.

If $G$ is algebraic statement (iii) is void. So let us assume that $G$ is not algebraic, i.e., $\rank(G)$ is infinite. By Proposition \ref{prop: bound rank} (applied to $G/N$)
 there exists a chain $$G=M_0\geq M_1\geq \ldots \geq M_\xi\geq \ldots \geq M_\nu=N$$ of normal closed subgroups $M_\xi$ of $G$ such that
 \begin{enumerate}
    \item $M_\xi/M_{\xi+1}$ algebraic for all $\xi<\nu$,
    \item $N_\xi=\bigcap_{\xi'<\xi}N_{\xi'}$ if $\xi$ is a limit ordinal
 \end{enumerate}
and $|\nu|=\rank(G/N)$. Assume $\rank(G/N)<\rank(G)$ and fix $\lambda<\mu$. Proposition \ref{prop: bound rank} applied to the chain
$$M_0\geq M_1\geq \ldots \geq M_\nu=N=N_0\geq N_1\geq \ldots \geq N_\lambda$$ shows that $\rank(G/N_\lambda)\leq|\nu+\lambda|=|\nu|+|\lambda|$. But $|\nu|=\rank(G/N)<\rank(G)$ and $|\lambda|<|\mu|=\rank(G)$ by choice of $\mu$.
\end{proof}

We are now prepared to prove the implication (iii)$\Rightarrow$(i) in Theorem \ref{theo: saturated}.
\begin{prop} \label{prop: from minimal kernel to arbitrary}
    Let $\CC$ be a formation and $\Gamma$ a pro-$\CC$-group of infinite rank. We consider pro-$\CC$-embedding problems
    \begin{equation} \label{eqn: embedding problem prop minimal}
    \xymatrix{
        \Gamma \ar@{->>}[rd] \ar@{..>>}[d]\\
        G \ar@{->>}[r] & H
    }
    \end{equation}
    for $\Gamma$. If every pro-$\CC$-embedding problem with $\rank(H)<\rank(\Gamma)$ and almost minimal algebraic kernel has a solution, then every pro-$\CC$-embedding problem with $\rank(H)<\rank(\Gamma)$ and $\rank(G)\leq \rank(\Gamma)$ has a solution.
\end{prop}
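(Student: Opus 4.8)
The plan is to decompose the kernel of the embedding problem into an ordinal-indexed chain of almost minimal algebraic pieces, solve the resulting sequence of ``small'' embedding problems one at a time using the hypothesis, and assemble the partial solutions by a projective-limit argument.

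Fix a pro-$\CC$-embedding problem as in (\ref{eqn: embedding problem prop minimal}) with $\rank(H)<\rank(\Gamma)$ and $\rank(G)\leq\rank(\Gamma)$; as usual we may assume it is non-trivial, so $\ker(\alpha)\neq 1$. Applying Lemma \ref{lemma: prepare for minimal prop} to the proalgebraic group $G$ and its normal closed subgroup $\ker(\alpha)$ produces an ordinal $\mu$ and a chain
$$\ker(\alpha)=N_0\geq N_1\geq\ldots\geq N_\lambda\geq\ldots\geq N_\mu=1$$
of normal closed subgroups of $G$ such that $N_{\lambda+1}=N_\lambda$ or $N_\lambda/N_{\lambda+1}$ is an almost minimal (hence algebraic) normal subgroup of $G/N_{\lambda+1}$, such that $N_\lambda=\bigcap_{\lambda'<\lambda}N_{\lambda'}$ for limit ordinals $\lambda$, and such that $\rank(G/N_\lambda)<\rank(\Gamma)$ for all $\lambda<\mu$. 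This last point needs a short justification: if $\rank(G)<\rank(\Gamma)$ it follows at once from Lemma \ref{lemma: rank for subgroups and quotients}, while if $\rank(G)=\rank(\Gamma)$ then $\rank(G/\ker(\alpha))=\rank(H)<\rank(\Gamma)=\rank(G)$, so item (iii) of Lemma \ref{lemma: prepare for minimal prop} applies and yields $\rank(G/N_\lambda)<\rank(G)=\rank(\Gamma)$ for $\lambda<\mu$. Write $\pi_{\lambda',\lambda}\colon G/N_\lambda\twoheadrightarrow G/N_{\lambda'}$ for the canonical transition maps ($\lambda'\leq\lambda$); thus $G/N_0=H$ (via $\alpha$), $G/N_\mu=G$ and $\pi_{0,\mu}=\alpha$.

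I would then construct, by transfinite recursion on $\lambda\leq\mu$, a family of epimorphisms $\f_\lambda\colon\Gamma\twoheadrightarrow G/N_\lambda$ with $\f_0=\beta$ and $\pi_{\lambda',\lambda}\circ\f_\lambda=\f_{\lambda'}$ for all $\lambda'\leq\lambda$; since the $\pi_{\lambda',\lambda}$ compose, it suffices at each stage to arrange compatibility with the immediate predecessor. For $\lambda=0$ set $\f_0=\beta$. For a successor $\lambda=\lambda'+1$: if $N_{\lambda'+1}=N_{\lambda'}$ put $\f_\lambda=\f_{\lambda'}$; otherwise $\pi_{\lambda',\lambda}\colon G/N_\lambda\twoheadrightarrow G/N_{\lambda'}$ together with $\f_{\lambda'}\colon\Gamma\twoheadrightarrow G/N_{\lambda'}$ is a non-trivial pro-$\CC$-embedding problem for $\Gamma$ (the group $G/N_\lambda$, being a quotient of $G$, is a pro-$\CC$-group) whose kernel $N_{\lambda'}/N_{\lambda'+1}$ is an almost minimal algebraic normal subgroup of $G/N_\lambda$ and with $\rank(G/N_{\lambda'})<\rank(\Gamma)$; the hypothesis supplies a solution, which we take as $\f_\lambda$. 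For a limit ordinal $\lambda$: since $N_\lambda=\bigcap_{\lambda'<\lambda}N_{\lambda'}$ and the Hopf subalgebras $k[G/N_{\lambda'}]\subseteq k[G]$ form an ascending chain, we have $k[G/N_\lambda]=\bigcup_{\lambda'<\lambda}k[G/N_{\lambda'}]$, i.e., $G/N_\lambda=\varprojlim_{\lambda'<\lambda}G/N_{\lambda'}$, so the compatible family $(\f_{\lambda'})_{\lambda'<\lambda}$ induces a morphism $\f_\lambda\colon\Gamma\to G/N_\lambda$; it is an epimorphism because $\f_\lambda^*$ restricts on each Hopf subalgebra $k[G/N_{\lambda'}]$ to the injective map $\f_{\lambda'}^*$, and hence is injective on their union $k[G/N_\lambda]$. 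Having reached $\lambda=\mu$, the epimorphism $\f_\mu\colon\Gamma\twoheadrightarrow G/N_\mu=G$ satisfies $\alpha\f_\mu=\pi_{0,\mu}\f_\mu=\f_0=\beta$, so it is a solution of the original embedding problem.

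The genuine content of the argument has already been packaged into the Jordan--H\"older type decomposition of Lemmas \ref{lemma: Hoelder type for algebraic} and \ref{lemma: prepare for minimal prop}, so what remains is essentially formal. The step requiring the most care is the rank bookkeeping $\rank(G/N_\lambda)<\rank(\Gamma)$ for $\lambda<\mu$: it is precisely what allows the hypothesis to be invoked at every successor stage, and it forces the case distinction on whether $\rank(G)$ is strictly smaller than, or equal to, $\rank(\Gamma)$.
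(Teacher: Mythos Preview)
Your proof is correct and follows essentially the same route as the paper's: apply Lemma \ref{lemma: prepare for minimal prop} to $\ker(\alpha)\unlhd G$, then climb the resulting chain by transfinite recursion, using the hypothesis at successor steps and a projective limit at limit steps. Your treatment of the rank bound $\rank(G/N_\lambda)<\rank(\Gamma)$ via the case split $\rank(G)<\rank(\Gamma)$ versus $\rank(G)=\rank(\Gamma)$ is in fact slightly more careful than the paper's one-line appeal to (iii), since the hypothesis of (iii) in Lemma \ref{lemma: prepare for minimal prop} is only guaranteed in the second case.
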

\begin{proof}
    Assume that an embedding problem (\ref{eqn: embedding problem prop minimal}) with $\rank(H)<\rank(\Gamma)$ and $\rank(G)\leq\rank(\Gamma)$ is given. Let $N$ be the kernel of this embedding problem.   According to Lemma \ref{lemma: prepare for minimal prop} there exists an ordinal number $\mu$ and a chain $$N=N_0\geq N_1\geq \ldots \geq N_\lambda\geq \ldots \geq N_\mu=1$$
    of normal closed subgroups $N_\lambda$ ($\lambda\leq \mu$) of $G$ such that
    \begin{enumerate}
        \item $N_{\lambda+1}=N_{\lambda}$  or $N_\lambda/N_{\lambda+1}$ is an almost minimal normal closed subgroup of $G/N_{\lambda+1}$ for $\lambda<\mu$ and
        \item $N_\lambda=\bigcap_{\lambda'<\lambda}N_{\lambda'}$ if $\lambda$ is a limit ordinal and
        \item if $G$ has infinite rank and $\rank(H)<\rank(G)$, then $\rank(G/N_\lambda)<\rank(G)$ for $\lambda<\mu$.
    \end{enumerate}
We will show by (transfinite) induction that for each $\lambda\leq\mu$ there exists an epimorphism $\f_\lambda\colon\Gamma\twoheadrightarrow G/N_\lambda$ such that the diagram
    $$\xymatrix{
    \Gamma \ar@{->>}^{\f_{\lambda'}}[rd] \ar@{->>}_-{\f_{\lambda}}[d]\\
    G/N_{\lambda} \ar@{->>}[r] &  G/N_{\lambda'}
}$$
commutes for $\lambda'\leq\lambda$. We let $\f_0\colon\Gamma\twoheadrightarrow G/N_0=H$ be the epimorphism of the given embedding problem. Assume now that $\f_{\lambda'}$ has been constructed for each $\lambda'<\lambda$. We have to distinguish two cases:
If $\lambda$ is a limit ordinal, then $N_\lambda=\bigcap_{\lambda'<\lambda}N_{\lambda'}$. So $G/N_\lambda=\varprojlim_{\lambda'} G/N_{\lambda'}$ and we set $\f_\lambda=\varprojlim_{\lambda'}\f_{\lambda'}$.

If $\lambda$ is a successor ordinal, say $\lambda=\nu+1$, we would like to define $\f_\lambda$ as a solution to the embedding problem
    \begin{equation} \label{eqn: intermediate embedding problem}
    \xymatrix{
    \Gamma \ar@{->>}^{\f_{\nu}}[rd] \ar@{..>>}[d]\\
    G/N_{\lambda} \ar@{->>}[r] &  G/N_{\nu}
}.
    \end{equation}
If $N_{\lambda}=N_{\nu}$, we may choose $\f_{\lambda}=\f_\nu$. Otherwise, (\ref{eqn: intermediate embedding problem}) has an almost minimal algebraic kernel $N_\nu/N_{\nu +1}$. So to establish the existence of $\f_\lambda$, it suffices to verify that $\rank(G/N_\nu)<\rank(\Gamma)$. If $\rank(G)$ is finite, this follows from the assumption that $\rank(\Gamma)$ is infinite. So we may assume that $\rank(G)$ is infinite. If $\rank(H)=\rank(G)$, then $\rank(G/N_\nu)\leq\rank(G)=\rank(H)<\rank(\Gamma)$. If $\rank(H)<\rank(G)$, then $\rank(G/N_{\nu})<\rank(G)\leq\rank(\Gamma)$ by (iii). By construction, the epimorphism $\f_\mu\colon \Gamma\twoheadrightarrow G$ is a solution to (\ref{eqn: embedding problem prop minimal}).
\end{proof}

%
%


We now work towards a proof of (v)$\Rightarrow$(iii) in Theorem \ref{theo: saturated}.

%
%

\begin{lemma} \label{lemma: every morphism is fibre product}
    Let $\alpha\colon G\twoheadrightarrow H$ be an epimorphism of proalgebraic groups such that $\ker(\alpha)$ is algebraic. Then there exist algebraic groups $H'$ and $H''$ and epimorphisms $H\twoheadrightarrow H''$, $H'\twoheadrightarrow H''$ such that $G\simeq H'\times_{H''}H$ and $\alpha$ can be identified with the projection onto the second factor. Moreover, if $\ker(\alpha)$ is an almost minimal normal subgroup of $G$, then the kernel of $H'\to H''$ is an almost minimal normal subgroup of $H'$.
\end{lemma}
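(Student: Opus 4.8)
The plan is to construct $M$, $H'$ and $H''$ explicitly and then identify $G$ with the fibre product. Since $N:=\ker(\alpha)$ is algebraic, Corollary~\ref{cor: intersection 1} gives a coalgebraic (hence normal) subgroup $M$ of $G$ with $M\cap N=1$. I would then set $H':=G/M$ and $H'':=G/MN$, where $MN$ is the normal closed subgroup of $G$ with $k[G/MN]=k[G/M]\cap k[G/N]$: here $H'$ is algebraic because $M$ is coalgebraic, and $H''$ is algebraic as a quotient of $H'$. The inclusions $M\subseteq MN$ and $N\subseteq MN$ produce the epimorphisms $H'=G/M\twoheadrightarrow G/MN=H''$ and $H=G/N\twoheadrightarrow G/MN=H''$, so that $H'\times_{H''}H$ makes sense.

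The heart of the argument is the morphism $\theta\colon G\to H'\times_{H''}H$, $g\mapsto(gM,gN)$, which is well defined since both coordinates have image $gMN$ in $H''$, and which satisfies $\operatorname{pr}_2\circ\theta=\alpha$. I would check that $\theta$ is an isomorphism in two steps. First, its kernel is $\ker(G\to G/M)\cap\ker(G\to G/N)=M\cap N=1$. Second, $\theta$ is an epimorphism: $\operatorname{pr}_1\circ\theta$ is the canonical epimorphism $G\twoheadrightarrow G/M=H'$, so $\theta(G)$ surjects onto $H'$, while $\theta(M)=\{(1,mN)\mid m\in M\}$ equals $\ker\bigl(\operatorname{pr}_1\colon H'\times_{H''}H\to H'\bigr)=\{1\}\times\ker(H\twoheadrightarrow H'')$, so $\theta(G)$ contains this kernel; and a closed subgroup of $H'\times_{H''}H$ that surjects onto $H'$ and contains $\ker(\operatorname{pr}_1)$ must be all of $H'\times_{H''}H$ (alternatively one verifies surjectivity directly with the fppf criterion for epimorphisms from the preliminaries). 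An epimorphism with trivial kernel is an isomorphism (again by the characterisations of epimorphisms in the preliminaries: $H'\times_{H''}H\simeq G/\ker(\theta)=G$), so $\theta\colon G\xrightarrow{\ \sim\ }H'\times_{H''}H$, and under it $\alpha$ becomes the projection $\operatorname{pr}_2$ onto the second factor.

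For the ``moreover'' clause, under $\theta$ the subgroup $N=\ker(\alpha)$ corresponds to $\ker(\operatorname{pr}_2)=\ker(H'\twoheadrightarrow H'')\times\{1\}$, and $\operatorname{pr}_1$ restricts to an isomorphism of $\ker(\operatorname{pr}_2)$ onto $\ker(H'\twoheadrightarrow H'')$. Because $\operatorname{pr}_1$ is an epimorphism, the assignments $P\mapsto P\times\{1\}$ and $\widetilde P\mapsto\operatorname{pr}_1(\widetilde P)$ are mutually inverse, inclusion-preserving bijections between the normal closed subgroups of $H'$ contained in $\ker(H'\twoheadrightarrow H'')$ and the normal closed subgroups of $H'\times_{H''}H$ contained in $\ker(\operatorname{pr}_2)$; they identify the two top elements and preserve the properties ``finite'' and ``trivial''. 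Hence $N$ is almost minimal normal in $G\simeq H'\times_{H''}H$ if and only if $\ker(H'\twoheadrightarrow H'')$ is almost minimal normal in $H'$, which is the assertion.

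The only genuine point to watch is the epimorphism claim for $\theta$, i.e. the small fibre-product surjectivity statement invoked there; everything else is bookkeeping with the isomorphism theorems and the quotient/Hopf-subalgebra dictionary recalled in the preliminaries, and since all the identifications are honest isomorphisms of group schemes the ``almost minimal'' conditions transport without extra work.
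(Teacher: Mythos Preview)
Your proof is correct and follows essentially the same approach as the paper: both use Corollary~\ref{cor: intersection 1} to find a coalgebraic subgroup $M$ with $M\cap\ker(\alpha)=1$, set $H'=G/M$ and $H''=G/M\ker(\alpha)$, and identify $G$ with the fibre product $H'\times_{H''}H$. The paper invokes the general isomorphism $G/(N_1\cap N_2)\simeq G/N_1\times_{G/N_1N_2}G/N_2$ via universal properties, whereas you spell out the map $\theta$ and verify injectivity and surjectivity directly; your treatment of the ``moreover'' clause via the bijection of normal subgroups is likewise a more detailed version of the paper's one-line argument.
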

\begin{proof}
    If $N_1,N_2$ are normal closed subgroups of a proalgebraic group $G$, it follows from the universal properties that $G/N_1\times_{G/N_1N_2} G/N_2\simeq G/(N_1\cap N_2)$. In particular, $G/N_1\times_{G/N_1N_2} G/N_2\simeq G$ if $N_1\cap N_2=1$.

    According to Corollary \ref{cor: intersection 1}, there exists a coalgebraic subgroup $N$ of $G$ with $N\cap\ker(\alpha)=1$.
    So $G\simeq G/N\times_{G/N\ker(\alpha)} G/\ker(\alpha)=H'\times_{H''}H$.

    Note that $\ker(\alpha)$ is isomorphic to $\ker(H'\to H'')$. If $N'$ is a normal closed subgroup of $H'$ contained in $\ker(H'\to H'')$. Then $N'\simeq N'\times_{H''} 1$ corresponds to a normal closed subgroup of $G$ contained in $\ker(\alpha)$.
\end{proof}

The following proposition proves (v)$\Rightarrow$(iii) of Theorem \ref{theo: saturated}.

\begin{prop} \label{prop: from weak N condition to almost minimal}
    Let $\CC$ be a formation and $\Gamma$ a pro-$\CC$-group of infinite rank. We consider non-trivial pro-$\CC$-embedding problems
    \begin{equation} \label{eqn: embedding problem prop hardest}
    \xymatrix{
        \Gamma \ar@{->>}^{\beta}[rd] \ar@{..>>}[d]\\
        G \ar@{->>}^-{\alpha}[r] & H
    }
    \end{equation}
    for $\Gamma$. Assume that for every $\CC$-embedding problem (\ref{eqn: embedding problem prop hardest}) and every normal closed subgroup $N$ of $\Gamma$ with $N\subseteq\ker(\beta)$ and $\rank(\Gamma/N)<\rank(\Gamma)$, there exists a solution $\f$ such that $\f(N)\neq 1$ if $\ker(\alpha)$ is finite and $\dim(\f(N))>0$ if $\dim(\ker(\alpha))>0$.
    Then every pro-$\CC$-embedding problem (\ref{eqn: embedding problem prop hardest}) with $\rank(H)<\rank(\Gamma)$ and almost minimal kernel has a solution.
\end{prop}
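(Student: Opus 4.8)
The plan is to reduce the given pro-$\CC$-embedding problem to a $\CC$-embedding problem (one with algebraic middle term) via Lemma~\ref{lemma: every morphism is fibre product}, and then to feed it into the hypothesis of the proposition with a carefully chosen normal subgroup of $\Gamma$. Since the kernel $\ker(\alpha)$ is almost minimal it is in particular algebraic, so Lemma~\ref{lemma: every morphism is fibre product} applies and provides algebraic groups $H'$, $H''$ and epimorphisms $\pi''\colon H\twoheadrightarrow H''$ and $\alpha'\colon H'\twoheadrightarrow H''$ such that $G\simeq H'\times_{H''}H$, with $\alpha$ identified with the projection onto $H$, and such that $\ker(\alpha')$ is an almost minimal normal subgroup of $H'$ isomorphic to $\ker(\alpha)$. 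Because $G$ is a pro-$\CC$-group, its algebraic quotients $H'$ and $H''$ belong to $\CC$, so $(\alpha'\colon H'\twoheadrightarrow H'',\ \beta''\colon\Gamma\twoheadrightarrow H'')$, where $\beta'':=\pi''\beta$, is a non-trivial $\CC$-embedding problem for $\Gamma$ (non-trivial since $\ker(\alpha')\simeq\ker(\alpha)\neq 1$).

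Now I would apply the hypothesis of the proposition to this $\CC$-embedding problem, taking $N:=\ker(\beta)$ as the distinguished normal closed subgroup of $\Gamma$. This is legitimate: $\Gamma/\ker(\beta)\simeq H$, so $\rank(\Gamma/N)=\rank(H)<\rank(\Gamma)$ by assumption, and $N=\ker(\beta)\subseteq\ker(\pi''\beta)=\ker(\beta'')$. The hypothesis then yields an epimorphism $\gamma\colon\Gamma\twoheadrightarrow H'$ with $\alpha'\gamma=\beta''$ and with the additional property that $\gamma(N)\neq 1$ if $\ker(\alpha')$ is finite, and $\dim(\gamma(N))>0$ if $\dim(\ker(\alpha'))>0$. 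Since $\alpha'\gamma=\beta''=\pi''\beta$, the pair $(\gamma,\beta)$ induces a morphism $\f\colon\Gamma\to H'\times_{H''}H=G$ with $\alpha\f=\beta$; thus $\f$ is a weak solution and everything comes down to showing that $\f$ is an epimorphism.

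To that end, observe that the restriction of $\f$ to $N$ takes values in $\ker(\alpha)$ (as $\alpha\f(N)=\beta(N)=1$) and, composed with the isomorphism $\ker(\alpha)\simeq\ker(\alpha')$, equals $\gamma|_N$ (because $\f$ is assembled from $\gamma$ and $\beta$, and $\beta$ kills $N$); hence $\f(N)$ corresponds to $\gamma(N)$ under $\ker(\alpha)\simeq\ker(\alpha')$. Now $\gamma(N)$ is a normal closed subgroup of $H'=\gamma(\Gamma)$ contained in the almost minimal normal subgroup $\ker(\alpha')$: in the case $\ker(\alpha')$ finite it is non-trivial, and in the case $\dim(\ker(\alpha'))>0$ it is not finite (a positive-dimensional algebraic group being infinite), so in both cases the definition of ``almost minimal'' forces $\gamma(N)=\ker(\alpha')$, i.e.\ $\f(N)=\ker(\alpha)$. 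Finally, $\alpha\f=\beta$ is an epimorphism, so $\f(\Gamma)\ker(\alpha)=G$; combined with $\ker(\alpha)=\f(N)\subseteq\f(\Gamma)$ this gives $\f(\Gamma)=G$, so $\f$ solves the original embedding problem. The essential input is the already-established Lemma~\ref{lemma: every morphism is fibre product}, which reduces matters to an algebraic kernel; after that the argument is short and largely formal, the only real choices being $N=\ker(\beta)$ — which turns ``$\f(N)=\ker(\alpha)$'' into ``$\f$ surjective'' — and the bookkeeping that matches the two cases in the definition of almost minimal kernel with the two alternatives offered by the hypothesis.
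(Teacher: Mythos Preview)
Your proof is correct and follows the same strategy as the paper: reduce via Lemma~\ref{lemma: every morphism is fibre product} to a $\CC$-embedding problem $H'\twoheadrightarrow H''$, take $N=\ker(\beta)$, apply the hypothesis, and assemble the weak solution $(\gamma,\beta)$. The only difference is in the surjectivity check at the end: the paper invokes Lemma~\ref{lemma: bijection for fibre product} to write the image of $(\gamma,\beta)$ as $H'\times_{\widetilde H}H$ for some intermediate $\widetilde H$, shows $\gamma(N)=\ker(H'\to\widetilde H)$, and then rules out $\widetilde H\neq H''$ by almost minimality, whereas you argue directly that almost minimality forces $\gamma(N)=\ker(\alpha')$, hence $\f(N)=\ker(\alpha)$, and conclude from $\f(\Gamma)\ker(\alpha)=G$. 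Your route is slightly more economical in that it bypasses Lemma~\ref{lemma: bijection for fibre product}; the paper's route makes the structure of the image more explicit but is otherwise equivalent.
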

\begin{proof}
    Assume that a pro-$\CC$-embedding problem (\ref{eqn: embedding problem prop hardest}) with $\rank(H)<\rank(\Gamma)$ and almost minimal kernel is given. According to Lemma \ref{lemma: every morphism is fibre product}, the epimorphism $G\twoheadrightarrow H$ can be written as $H'\times_{H''} H\to H$, where $H'\twoheadrightarrow H''$ is an epimorphism of algebraic groups with almost minimal kernel and $H\twoheadrightarrow H''$. Let $N$ be the kernel of $\beta\colon\Gamma\twoheadrightarrow H$. Then $\rank(\Gamma/N)=\rank(H)<\rank(\Gamma)$. Since $N$ lies in the kernel of $\Gamma\to H''$, it follows from the assumption, that the $\CC$-embedding problem
    \begin{equation} \label{eqn: embedding problem for shortening proof}
        \xymatrix{
        \Gamma \ar@{->>}[rd] \ar@{..>>}[d]\\
        H' \ar@{->>}[r] & H''
    }
    \end{equation}
    has a solution $\f\colon \Gamma\twoheadrightarrow H'$ such that
    \begin{itemize}
        \item if $\ker(H'\to H'')$ is finite, then $\f(N)\neq 1$ and 
        \item if $\dim(\ker(H'\to H''))>0$, then $\dim(\f(N))>0$. 
    \end{itemize}
Clearly, the induced map $\psi=(\f,\beta)\colon \Gamma\to H'\times_{H''} H$ is a weak solution of the given embedding problem (\ref{eqn: embedding problem prop hardest}). We will show that $\psi$ actually is a solution, i.e., an epimorphism. Since $N$ lies in the kernel of $\Gamma\twoheadrightarrow H''$ and $\f$ is a solution of (\ref{eqn: embedding problem for shortening proof}) we have $\f(N)\subseteq \ker(H'\to H'')$. As $\f\colon\Gamma\to H'$ is an epimorphism, $\f(N)$ is a normal closed subgroup of $H'$. Since $\ker(H'\to H'')$ is almost minimal, we see that $\f(N)=\ker(H'\to H'')$. It thus follows from Remark \ref{rem: equivalent conditions for f(N)=ker(alpha)} that $\psi\colon \Gamma\to H'\times_{H''}H=G$ is an epimorphism. So $\psi$ is the desired solution of (\ref{eqn: embedding problem prop hardest}).
\end{proof}

Finally, we can put all pieces together for the proof of  Theorem \ref{theo: saturated}:

\vspace{5mm}

\noindent \emph{Proof of Theorem \ref{theo: saturated}:}  Clearly, (i)$\Rightarrow$(ii)$\Rightarrow$(iii). By Proposition \ref{prop: from minimal kernel to arbitrary} (iii)$\Rightarrow$(i). So (i), (ii) and (iii) are equivalent.

Let us show (ii)$\Rightarrow$(iv): So let (\ref{eqn: embedding problem thm}) be a $\CC$-embedding problem and $N$ a normal closed subgroup of $\Gamma$ with $N\subseteq\ker(\beta)$ and $\rank(\Gamma/N)<\rank(\Gamma)$. By assumption, the embedding problem
$$
    \xymatrix{
    \Gamma \ar@{->>}[rd] \ar@{..>>}[d]\\
    G\times_H \Gamma/N \ar@{->>}[r] & \Gamma/N
}
$$
has a solution $\f'\colon\Gamma\twoheadrightarrow   G\times_H \Gamma/N$. Let $\f=\pi_G\circ\f'$, where $\pi_G\colon     G\times_H \Gamma/N\to G$ denotes the projection onto the first factor. Then $\f$ is a solution to (\ref{eqn: embedding problem thm}). According to Remark \ref{rem: equivalent conditions for f(N)=ker(alpha)} we have $\f(N)=\ker(\alpha)$.

%
%
Now (iv)$\Rightarrow$(v) is obvious and (v)$\Rightarrow$(iii) by Proposition \ref{prop: from weak N condition to almost minimal}. So we already know that the first five conditions of Theorem \ref{theo: saturated} are equivalent.

Let us show (vi)$\Rightarrow$(v): So we assume that a $\CC$-embedding problem (\ref{eqn: embedding problem thm}) is given, together with a normal closed subgroup of $N$ of $\Gamma$ with $N\subseteq\ker(\beta)$ and $\rank(\Gamma/N)<\rank(\Gamma)$. Let $N_{\alpha,\beta}$ denote the intersection of all kernels of all solutions to (\ref{eqn: embedding problem thm}).

 Let us first assume that $\ker(\alpha)$ is finite. By assumption we have $\rank(\Gamma/N_{\alpha,\beta})=\rank(\Gamma)$. On the other hand, $\rank(\Gamma/N_{\alpha,\beta}N)\leq\rank(\Gamma/N)<\rank(\Gamma)$. This shows that the kernel $N_{\alpha,\beta}N/N_{\alpha,\beta}=N/(N\cap N_{\alpha,\beta})$ of the epimorphism $\Gamma/N_{\alpha,\beta}\twoheadrightarrow\Gamma/N_{\alpha,\beta}N$ must be non-trivial. In other words, $N$ is not contained in $N_{\alpha,\beta}$. Thus there exists a solution $\f$ of (\ref{eqn: embedding problem thm}) such that $N$ is not contained in $\ker(\f)$, i.e., $\f(N)\neq 1$ as desired.

Let us now assume that $\dim(\ker(\alpha))>0$. By assumption, $\dim(\Gamma/N_{\alpha,\beta})=\rank(\Gamma)$. On the other hand, $\dim(\Gamma/N_{\alpha,\beta}N)\leq\dim(\Gamma/N)\leq\rank(\Gamma/N)<\rank(\Gamma)$, using Lemmas \ref{lemma: dim for subgroups and quotients} and \ref{lemma: dim bounded by rank}. Applying Lemma \ref{lemma: dim for kernel zero dimensional} to the epimorphism $\Gamma/N_{\alpha,\beta}\twoheadrightarrow\Gamma/N_{\alpha,\beta}N$ shows that its kernel $N_{\alpha,\beta}N/N_{\alpha,\beta}=N/(N\cap N_{\alpha,\beta})$ has positive dimension. Using Lemma \ref{lemma: dim for intersection} we see that $N/(N\cap\ker(\f))$ has positive dimension for some solution $\f$ of (\ref{eqn: embedding problem thm}). Thus $\f(N)\simeq N/(N\cap\ker(\f))$ has positive dimension as desired.

We next show that (i)$\Rightarrow$(vii): So we assume that a $\CC$-embedding problem (\ref{eqn: embedding problem thm}) is given. Let $I$ be a set of cardinality $\rank(\Gamma)$ and set $G'=\prod_{I}(G\twoheadrightarrow H)$. Then $\rank(G')\leq\rank(\Gamma)$. By assumption the embedding problem
$$
\xymatrix{
    \Gamma \ar@{->>}[rd] \ar@{..>>}[d]\\
    G' \ar@{->>}[r] & H
}
$$
has a solution $\f\colon \Gamma\twoheadrightarrow G'$. For every $i\in I$ we can compose $\f$ with the projection $\pi_i\colon G'\to G$ that picks out the $i$-th component, to obtain a solution $\f_i\colon\Gamma\twoheadrightarrow G$ of (\ref{eqn: embedding problem thm}).
By construction these solutions are independent.

%
%
%
%
%

Finally, we show (vii)$\Rightarrow$(vi): So we assume that a $\CC$-embedding problem (\ref{eqn: embedding problem thm}) is given together with independent solutions $(\f_i)_{i\in I}$, where $|I|=\rank(\Gamma)$. The intersection $N$ of all kernels of all solution to $(\ref{eqn: embedding problem thm})$ is contained in $\bigcap_{i\in I}\ker(\f_i)$. So the isomorphism $\Gamma/\bigcap_{i\in I}\ker(\f_i)\to\prod_{i\in I}(G\twoheadrightarrow H)$ yields an epimorphism $\Gamma/N\twoheadrightarrow \prod_{i\in I}(G\twoheadrightarrow H)$. By Lemma \ref{lemma: rank for subgroups and quotients} we have
$$\rank(\Gamma/N)\geq \rank(\prod_{i\in I}(G\twoheadrightarrow H))\geq\rank(\prod_{i\in I}\ker(\alpha))=|I|=\rank(\Gamma).$$
Moreover, if $\dim(\ker(\alpha))>0$ we have $\dim(\Gamma/N)\geq\dim(\prod_{i\in I}(G\twoheadrightarrow H))\geq|I|=\rank(\Gamma)$ by Lemmas~\ref{lemma: dim for subgroups and quotients} and \ref{lemma: dim of fibre product}. But then necessarily $\dim(\Gamma/N)=\rank(\Gamma)$, because $\dim(\Gamma/N)\leq \dim(\Gamma)\leq\rank(\Gamma)$ (Lemmas \ref{lemma: dim for subgroups and quotients} and~\ref{lemma: dim bounded by rank}).
\qed

\subsection{Existence of saturated pro-$\CC$-groups}

We next discuss the question, for which infinite cardinals $\kappa$ there exists a saturated pro-$\CC$-group of rank $\kappa$. We will show that the answer is always positive for $\kappa\geq |\kb|$.

We will need some preparatory results. Two epimorphisms $G_1\twoheadrightarrow H$, $G_2\twoheadrightarrow H$ of proalgebraic groups are said to be \emph{isomorphic} if there exists an isomorphism $G_1\to G_2$ such that
$$
\xymatrix{
G_1 \ar[rd] \ar[rr] &  &G_2 \ar[ld] \\
& H &
}
$$
commutes. This definition also applies for $G_1=G_2$.

%

For a formation $\CC$ we denote by $|\CC|$ the cardinality of the set of all isomorphism classes of $\CC$-groups.

\begin{lemma} \label{lemma: bound isomclasses}
    Let $\kappa$ be an infinite cardinal and let $\CC$ be a formation such that
    \begin{enumerate}
        \item    $|\CC|\leq\kappa$,
        \item  every $\CC$-group has at most $\kappa$ normal closed subgroups and
        \item there are, up to isomorphism, at most $\kappa$ distinct epimorphisms between any two $\CC$-groups.
    \end{enumerate}
Let $H$ be a pro-$\CC$-group with $\rank(H)\leq\kappa$. Then the set of isomorphism classes of epimorphisms $G\twoheadrightarrow H$ of pro-$\CC$-groups with algebraic kernel, has cardinality less or equal to $\kappa$.
\end{lemma}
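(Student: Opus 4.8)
The plan is to reduce everything to Lemma~\ref{lemma: every morphism is fibre product} and then count the resulting ``fibre product data''. By that lemma, any epimorphism $\alpha\colon G\twoheadrightarrow H$ of proalgebraic groups with algebraic kernel is isomorphic, over $H$, to the second projection $H'\times_{H''}H\to H$ for suitable epimorphisms $\nu\colon H'\twoheadrightarrow H''$ and $\mu\colon H\twoheadrightarrow H''$ with $H',H''$ algebraic; and when $G$ is a pro-$\CC$-group, the groups $H'$ and $H''$, being algebraic quotients of $G$ and of $H$ respectively, lie in $\CC$. Since $\ker(\mu)$ is a coalgebraic subgroup $M$ of $H$ with $H''\simeq H/M$, the isomorphism class over $H$ of $(G,\alpha)$ is obtained from the datum consisting of $M$ together with the isomorphism class $[\nu]$ of an epimorphism $H'\twoheadrightarrow H/M$ from an algebraic $\CC$-group $H'$, via the rule $(M,[\nu])\mapsto$ (isomorphism class of $H'\times_{H/M}H\xrightarrow{p_2}H$). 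Passing to the isomorphism class of $\nu$ is legitimate, because an isomorphism $H'_1\to H'_2$ compatible with $\nu_1$ and $\nu_2$ induces, on taking the product with $\mathrm{id}_H$, an isomorphism $H'_1\times_{H/M}H\to H'_2\times_{H/M}H$ over $H$; this is exactly the step that lets us avoid counting the automorphism groups of the intermediate groups. So it suffices to bound the number of data $(M,[\nu])$ by $\kappa$.

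First I would show that $H$ has at most $\kappa$ coalgebraic subgroups. Choose a neighbourhood basis $\N$ at $1$ for $H$ with $|\N|\le\kappa$, which exists since $\rank(H)\le\kappa$; then $k[H]=\bigcup_{M'\in\N}k[H/M']$ is a directed union, and each $H/M'$, being an algebraic quotient of the pro-$\CC$-group $H$, is a $\CC$-group. If $M$ is any coalgebraic subgroup of $H$, then $k[H/M]$ is finitely generated, hence contained in some $k[H/M']$, so $M\supseteq M'$; and the coalgebraic subgroups of $H$ containing a fixed $M'\in\N$ correspond to the normal closed subgroups of $H/M'$, of which there are at most $\kappa$ by hypothesis~(ii). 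Summing over $\N$ gives at most $|\N|\cdot\kappa=\kappa$ coalgebraic subgroups of $H$. Next, for a fixed coalgebraic $M$, the number of isomorphism classes of epimorphisms $H'\twoheadrightarrow H/M$ with $H'$ ranging over algebraic $\CC$-groups is at most $|\CC|\cdot\kappa\le\kappa$: hypothesis~(i) bounds the number of choices of $H'$ up to isomorphism, and hypothesis~(iii) bounds, for each $H'$, the number of isomorphism classes of epimorphisms between the $\CC$-groups $H'$ and $H/M$. Combining, the set of data $(M,[\nu])$ has cardinality at most $\kappa\cdot\kappa=\kappa$, and since by Lemma~\ref{lemma: every morphism is fibre product} every isomorphism class of epimorphisms $G\twoheadrightarrow H$ of pro-$\CC$-groups with algebraic kernel arises from such a datum, the desired set has cardinality at most $\kappa$.

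The point to get right—and the only real subtlety—is that a naive enumeration of the fibre product data would also bring in the automorphism groups of the intermediate algebraic groups $H/M$ (through the many epimorphisms $H'\twoheadrightarrow H/M$ that differ by an automorphism of the target), and these need not have cardinality $\le\kappa$. This is precisely why one records only the isomorphism class of $\nu$ and invokes hypothesis~(iii), which is formulated for isomorphism classes of epimorphisms rather than for all epimorphisms. A minor technical point is that for an arbitrary datum $(M,[\nu])$ the fibre product $H'\times_{H/M}H$ need not a priori be a pro-$\CC$-group, since $\CC$ is not assumed closed under closed subgroups; accordingly the rule $(M,[\nu])\mapsto[H'\times_{H/M}H]$ should be regarded as a partial map into the set of isomorphism classes we want to count, which is harmless because we only use that it is onto, and this surjectivity is exactly the content of Lemma~\ref{lemma: every morphism is fibre product} applied to a pro-$\CC$ epimorphism.
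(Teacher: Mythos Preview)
Your proof is correct and follows essentially the same route as the paper: bound the coalgebraic subgroups of $H$ by $\kappa$ via a neighbourhood basis and hypothesis~(ii), bound the isomorphism classes of epimorphisms $H'\twoheadrightarrow H/M$ by $\kappa$ via (i) and (iii), and invoke Lemma~\ref{lemma: every morphism is fibre product} for surjectivity of the data map. Your additional remarks on why one must pass to isomorphism classes of $\nu$ and on the partiality of the data map are accurate but not needed for the argument; in fact $H'\times_{H/M}H$ is always a pro-$\CC$-group here, since the kernels of the projections to $H'$ and to the algebraic quotients $H/M'$ of $H$ intersect trivially and $\CC$ is closed under subdirect products, so the map is total.
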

\begin{proof}
    We start by showing that $H$ has at most $\kappa$ coalgebraic subgroups. Let $\mathcal{M}$ denote the set of coalgebraic subgroups of $H$ and let $\N$ be a neighborhood basis at $1$ for $H$ with $|\N|\leq\kappa$. For every $M\in\mathcal{M}$ there exists an $N\in\N$ with $N\subseteq M$. On the other hand, for a fixed $N$, there are at most $\kappa$ many $M\in\mathcal{M}$ with $N\subseteq M$ by (ii). Thus
    $$|\mathcal{M}|=\left|\bigcup_{N\in\N}\{ M\in\mathcal{M}|\ N\subseteq M \}\right|\leq |\N|\cdot \kappa\leq\kappa. $$
    For a fixed coalgebraic subgroup $N$ of $H$, there are at most $\kappa$ epimorphisms $H'\twoheadrightarrow H/N=H''$ (up to isomorphism) of $\CC$-groups by (i) and (iii).

    Such an epimorphism $H'\twoheadrightarrow H''$ gives rise to an epimorphism $H'\times_{H''}H\twoheadrightarrow H$ of pro-$\CC$-groups with algebraic kernel. Moreover, by Lemma \ref{lemma: every morphism is fibre product}, every epimorphism $G\twoheadrightarrow H$ of pro-$\CC$-groups with algebraic kernel is of this form. Thus there are, up to isomorphism, at most $\kappa\cdot\kappa=\kappa$ epimorphisms $G\twoheadrightarrow H$ of pro-$\CC$-groups with algebraic kernel.
\end{proof}

\begin{theo} \label{theo: existence of saturated groups}
 Let $\kappa$ be an infinite cardinal. If $\CC$ is a formation such that
    \begin{enumerate}
    \item    $|\CC|\leq\kappa$,
    \item  every $\CC$-group has at most $\kappa$ normal closed subgroups and
    \item there are, up to isomorphism, at most $\kappa$ distinct epimorphisms between any two $\CC$-groups,
  \end{enumerate}
    then there exists a saturated pro-$\CC$-group of rank $\kappa$. In particular, if $\kappa\geq|\overline{k}|$, there exists a saturated pro-$\CC$-group of rank $\kappa$ for any formation $\CC$.
\end{theo}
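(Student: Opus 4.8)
The plan is to dispose of the ``in particular'' clause first and then carry out a transfinite fibre-product construction. For the reduction, note that over any field $k$ there are at most $|\kb|$ isomorphism classes of algebraic groups, hence at most $|\kb|$ isomorphism classes of $\CC$-groups; that every algebraic group has a Noetherian coordinate ring of cardinality at most $|\kb|$, so it has at most $|\kb|$ closed (a fortiori normal closed) subgroups; and that for any two algebraic groups there are at most $|\kb|$ $k$-algebra maps between their coordinate rings, hence at most $|\kb|$ epimorphisms up to isomorphism. Thus (i)--(iii) hold for $\kappa=|\kb|$, and since they are ``$\leq\kappa$''-conditions they persist for every $\kappa\geq|\kb|$. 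So it suffices to prove: if $\CC$ satisfies (i)--(iii) for an infinite cardinal $\kappa$, there is a saturated pro-$\CC$-group of rank $\kappa$. By Theorem~\ref{theo: saturated} it is enough to build a pro-$\CC$-group $\Gamma$ with $\rank(\Gamma)=\kappa$ satisfying condition (vii), i.e.\ such that every non-trivial $\CC$-embedding problem for $\Gamma$ has $\kappa$ independent solutions. Targeting (vii) is the key simplification: a $\CC$-embedding problem involves only \emph{algebraic} $G$ and $H$, so no enumeration of the (possibly too numerous) non-algebraic quotients of $\Gamma$ is needed, and the troublesome cofinality issues attached to such quotients never arise.

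I would realize $\Gamma$ as $\Gamma_\kappa$, the inverse limit of a continuous chain $(\Gamma_\lambda)_{\lambda\leq\kappa}$ of pro-$\CC$-groups with epimorphic transition maps, each successor map having algebraic kernel. Start with $\Gamma_0=\prod_\kappa H_0$ for a fixed non-trivial $H_0\in\CC$: this is pro-$\CC$ (formations are closed under finite products) and has rank $\kappa$ by Example~\ref{ex: rank for fibre product}. At limit stages take the inverse limit; a routine computation of $\dim_k k[\Gamma_\lambda]$ via Proposition~\ref{prop: rank}(i), together with the surjection onto $\Gamma_0$, shows every $\Gamma_\lambda$ has rank exactly $\kappa$ (at successor stages this also follows from Lemma~\ref{lemma: ir for algebraic kernel}). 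Now by the hypotheses --- Proposition~\ref{prop: rank}(iv) bounds the coalgebraic subgroups of $\Gamma_\lambda$ by $\kappa$, and (i),(iii) bound the algebraic epimorphisms onto each algebraic quotient --- for each $\lambda<\kappa$ there are at most $\kappa$ isomorphism classes of pairs $\mathcal{E}=(\alpha\colon G\twoheadrightarrow H,\ \gamma\colon\Gamma_\lambda\twoheadrightarrow H)$ with $G,H\in\CC$; fix enumerations $(\mathcal{E}_{\lambda,j})_{j<\kappa}$. Fix a surjection $b\colon\kappa\to\kappa\times\kappa$ all of whose fibres have cardinality $\kappa$. When $b(\mu)=(\lambda,j)$ with $\lambda\leq\mu$, set $\Gamma_{\mu+1}=G\times_H\Gamma_\mu$, the fibre product formed along $\alpha$ and $\gamma\circ\pi_{\lambda\mu}$ (otherwise $\Gamma_{\mu+1}=\Gamma_\mu$); the first projection $\Gamma_{\mu+1}\twoheadrightarrow\Gamma_\mu$ is an epimorphism with algebraic kernel $\cong\ker(\alpha)$, and $\Gamma_{\mu+1}$ is again pro-$\CC$ (write $\Gamma_\mu$ as a cofiltered limit of $\CC$-groups through which $H$ factors and use that a subdirect product of two $\CC$-groups lies in $\CC$). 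Composing the second projection with the transition map down to $\Gamma$ yields a solution $\f\colon\Gamma\twoheadrightarrow G$ of $\mathcal{E}_{\lambda,j}$.

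For the verification: given any $\CC$-embedding problem $(\alpha\colon G\twoheadrightarrow H,\ \beta\colon\Gamma\twoheadrightarrow H)$ for $\Gamma$, $\beta$ factors through some $\Gamma_\lambda$ (since $k[H]$ is finitely generated and $k[\Gamma]=\bigcup_{\lambda<\kappa}k[\Gamma_\lambda]$), so the problem is $\mathcal{E}_{\lambda,j}$ for some $j$, hence is handled at $\kappa$ stages and acquires $\kappa$ solutions $\f^{(t)}$, $t<\kappa$. These are independent: one maintains by induction on $t$ that the joint map $\Gamma_{\mu_t+1}\to\prod_{s\leq t}(G\twoheadrightarrow H)$ is an epimorphism --- the inductive step being that base change of an epimorphism along a fibre product is again an epimorphism --- whence $\Gamma\twoheadrightarrow\prod_{s\leq t}(G\twoheadrightarrow H)$ for all finite $t$, so that $\Gamma\to\prod_{t<\kappa}(G\twoheadrightarrow H)$ is an epimorphism. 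Thus $\Gamma$ satisfies condition (vii) of Theorem~\ref{theo: saturated}, so $\Gamma$ is a saturated pro-$\CC$-group of rank $\kappa$.

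The main obstacle is the middle paragraph's bookkeeping: one must ensure (a) that all relevant algebraic embedding problems --- of which, by (i)--(iii) and the method of Lemma~\ref{lemma: bound isomclasses}, there are only $\kappa$ at each of the $\kappa$ stages --- are treated, and treated cofinally often so that each gets a full set of $\kappa$ solutions; and (b) that these solutions are produced coherently enough (by always base-changing the previously built approximation to $\prod(G\twoheadrightarrow H)$) to be independent in the sense of Theorem~\ref{theo: saturated}(vii). The cardinality hypotheses are precisely what make the bookkeeping fit into a recursion of length $\kappa$, while passing to the algebraic characterization (vii) is what lets one sidestep the cofinality problems that would plague a direct enumeration of embedding problems with non-algebraic $H$.
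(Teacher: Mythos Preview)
Your proof is correct and targets the same characterization~(vii) of Theorem~\ref{theo: saturated} as the paper, but the construction is organized quite differently. The paper builds a \emph{countable} tower $\Gamma_0\leftarrow\Gamma_1\leftarrow\cdots$ in which each successor step is a single ``big'' fibre product: $\Gamma_{n+1}=\prod_i(G_i\twoheadrightarrow\Gamma_n)$, where the $G_i\twoheadrightarrow\Gamma_n$ range (with multiplicity $\kappa$) over all isomorphism classes of epimorphisms of pro-$\CC$-groups with algebraic kernel onto $\Gamma_n$; Lemma~\ref{lemma: bound isomclasses} bounds the index set by $\kappa$, and the $\kappa$-fold repetition immediately yields $\kappa$ independent solutions via the identification $\prod_J(G\times_H\Gamma_i\twoheadrightarrow\Gamma_i)=\bigl(\prod_J(G\twoheadrightarrow H)\bigr)\times_H\Gamma_i$. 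Your approach instead runs a transfinite recursion of length $\kappa$ with \emph{algebraic-kernel} successor steps, one embedding problem at a time, using a bookkeeping surjection $b\colon\kappa\to\kappa\times\kappa$ with $\kappa$-sized fibres to revisit each problem cofinally often. The paper's route avoids the bookkeeping and the transfinite induction entirely; your route is closer to a Fra\"iss\'e-style amalgamation and makes the algebraic-kernel steps more explicit, which in turn makes the rank computation at each stage immediate from Lemma~\ref{lemma: ir for algebraic kernel}.

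One small inaccuracy: Proposition~\ref{prop: rank}(iv) alone does not bound the number of coalgebraic subgroups of $\Gamma_\lambda$ by $\kappa$; it only gives a neighborhood basis of that size. The bound on \emph{all} coalgebraic subgroups also needs hypothesis~(ii), exactly as in the first paragraph of the proof of Lemma~\ref{lemma: bound isomclasses} (each coalgebraic subgroup contains a basis element $N$, and $\Gamma_\lambda/N\in\CC$ has at most $\kappa$ normal closed subgroups). You invoke the ``method of Lemma~\ref{lemma: bound isomclasses}'' later, so the argument is there, but the attribution in the middle paragraph should mention~(ii) as well.
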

\begin{proof}
    Let $\Gamma_0=\prod_i G_i$ be a product of $\CC$-groups such that every $\CC$-isomorphism class occurs exactly $\kappa$ many times in this product. By (i) and Example \ref{ex: rank for fibre product} we have $\rank(\Gamma_0)=\kappa$. According to Lemma~\ref{lemma: bound isomclasses} there are at most $\kappa$ isomorphism classes of epimorphisms $G\twoheadrightarrow \Gamma_0$ of pro-$\CC$-groups with algebraic kernel. Let $\Gamma_1=\prod_{i}(G_i\twoheadrightarrow\Gamma_0)$ be a fibred product of epimorphisms of pro-$\CC$-groups with algebraic kernel, where every isomorphism class of such epimorphisms occurs exactly $\kappa$ times. By Lemma \ref{lemma: bound isomclasses} this fibred product is formed over an index set of cardinality $\kappa$. According to Example \ref{ex: rank for fibre product} we have $\rank(\Gamma_1)\leq\kappa$. Since $\Gamma_0$ is a quotient of $\Gamma_1$ we actually have $\rank(\Gamma_1)=\kappa$. We now keep iterating this construction. So $\Gamma_2$ would be a fibred product of epimorphisms $G_i\twoheadrightarrow \Gamma_1$ and so on. Let $\Gamma=\varprojlim_{i\in\mathbb{N}} \Gamma_i$ be the projective limit of the $\Gamma_i$'s. Then $\Gamma$ is a pro-$\CC$-group of rank $\kappa$.

        We will show that $\Gamma$ is saturated. To do this we will use condition (vii) of Theorem \ref{theo: saturated}. So let
    \begin{equation} \label{eqn: embedding problem proof of existence}
    \xymatrix{
        \Gamma \ar@{->>}^\beta[rd] \ar@{..>}[d] & \\
        G \ar@{->>}^-\alpha[r] & H
    }
    \end{equation}
    be a non-trivial $\CC$-embedding problem. Since $H$ is algebraic, there exists an $i\in\mathbb{N}$ such that $H$ is a quotient of $\Gamma_i$ (i.e., $k[H]$ is contained in $k[\Gamma_i]\subseteq k[\Gamma]$). Then $G\times_H\Gamma_i\twoheadrightarrow \Gamma_i$ is an epimorphism of pro-$\CC$-groups with algebraic kernel. By construction of $\Gamma$, there is an epimorphism $\Gamma\twoheadrightarrow \prod_J (G\times_H\Gamma_i\twoheadrightarrow \Gamma_i)$, where the product is taken over an index set $J$ of cardinality $\kappa$. For $j\in J$ let $\f_j\colon\Gamma\twoheadrightarrow G$ be the composition
    $$\Gamma\twoheadrightarrow \prod_J (G\times_H\Gamma_i\twoheadrightarrow \Gamma_i)\twoheadrightarrow G\times_H \Gamma_i\twoheadrightarrow G,$$
    where the map in the middle is the projection onto the $j$-th factor. Then $\f_j$ is a solution to (\ref{eqn: embedding problem proof of existence}) for every $j\in J$. In fact, since $$\prod_J (G\times_H\Gamma_i\twoheadrightarrow \Gamma_i)=\left(\prod_J(G\twoheadrightarrow H)\right)\times_H\Gamma_i,$$ the family $(\f_j)_{j\in J}$ of solutions is independent.

     Finally, a counting argument shows that
     \begin{enumerate}
        \item Up to isomorphism there are at most $|\overline{k}|$ algebraic groups.
        \item An algebraic group has at most  $|\overline{k}|$ normal closed subgroups.
        \item There are at most $|\overline{k}|$ epimorphisms between any two algebraic groups.
     \end{enumerate}
Thus conditions (i), (ii) and (iii) are satisfied for any $\CC$ if $\kappa\geq |\overline{k}|$.
\end{proof}

We note that the conditions of Theorem \ref{theo: existence of saturated groups} are satisfied for any infinite $\kappa$ if $\CC=\mathtt{C}_k$ is a formation of algebraic groups coming from a formation $\mathtt{C}$ of finite groups. The conditions are also met for any infinite $\kappa$ for the class $\CC$ of all diagonalizable algebraic groups.

\subsection{Uniqueness of saturated pro-$\CC$-groups}

The uniqueness of saturated pro-$\CC$-groups of a fixed rank follows by a rather standard back and forth argument.

\begin{theo} \label{theo: uniqueness}
    Let $\CC$ be a formation and let $\Gamma$ and $\Gamma'$ be saturated pro-$\CC$-groups with $\ir(\Gamma)=\ir(\Gamma')$. Then $\Gamma$ and $\Gamma'$ are isomorphic.
\end{theo}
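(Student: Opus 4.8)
The plan is to prove this by a standard transfinite back-and-forth argument, producing an isomorphism $\Gamma\to\Gamma'$ as the projective limit of compatible isomorphisms between suitable quotients. Write $\kappa=\rank(\Gamma)=\rank(\Gamma')$; recall that a saturated pro-$\CC$-group has infinite rank, so $\kappa$ is an infinite cardinal. Fix neighborhood bases at $1$, say $\mathcal N=\{U_\nu\mid\nu<\kappa\}$ for $\Gamma$ and $\mathcal N'=\{U'_\nu\mid\nu<\kappa\}$ for $\Gamma'$. By transfinite recursion on $\lambda\le\kappa$ I would build descending chains of normal closed subgroups $\Gamma=M_0\supseteq M_1\supseteq\cdots\supseteq M_\lambda\supseteq\cdots$ in $\Gamma$ and $\Gamma'=M_0'\supseteq M_1'\supseteq\cdots$ in $\Gamma'$, together with isomorphisms $\theta_\lambda\colon\Gamma/M_\lambda\xrightarrow{\sim}\Gamma'/M_\lambda'$ compatible with the projective systems (i.e.\ $\theta_\lambda$ descends to $\theta_\rho$ for $\rho\le\lambda$), arranging that: (a) $M_\lambda/M_{\lambda+1}$ and $M_\lambda'/M_{\lambda+1}'$ are algebraic; (b) $M_\lambda=\bigcap_{\rho<\lambda}M_\rho$ and $M_\lambda'=\bigcap_{\rho<\lambda}M_\rho'$ at limit ordinals; and (c), by a routine bookkeeping that alternately addresses the two bases at successor stages, every $U_\nu$ eventually contains some $M_{\lambda+1}$ and every $U_\nu'$ eventually contains some $M_{\lambda+1}'$. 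Feeding (a) and (b) into Proposition~\ref{prop: bound rank}, applied inside $\Gamma/M_\lambda$ to the chain $M_0/M_\lambda\supseteq\cdots\supseteq M_\lambda/M_\lambda=1$, gives $\rank(\Gamma/M_\lambda)\le|\lambda|<\kappa$ throughout (and likewise $\rank(\Gamma'/M_\lambda')<\kappa$), which is exactly what keeps the saturation hypothesis applicable. The base stage takes $M_0=\Gamma$, $M_0'=\Gamma'$, $\theta_0=\id$, and limit stages take intersections and $\theta_\lambda=\varprojlim_{\rho<\lambda}\theta_\rho$.

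The heart of the construction is the successor step, which I would carry out as follows. Suppose $\theta_\lambda\colon\Gamma/M_\lambda\xrightarrow{\sim}\Gamma'/M_\lambda'$ is given and stage $\lambda$ must push $M_{\lambda+1}$ into some $U_\nu$ (the case of pushing $M_{\lambda+1}'$ into $U_\nu'$ is symmetric, with the roles of $\Gamma$ and $\Gamma'$ interchanged and saturation of $\Gamma$ used in place of that of $\Gamma'$). Set $M_{\lambda+1}=M_\lambda\cap U_\nu$; then $M_\lambda/M_{\lambda+1}$ embeds into the algebraic group $\Gamma/U_\nu$, hence is algebraic, and $M_{\lambda+1}\subseteq U_\nu$. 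Now consider the pro-$\CC$-embedding problem for $\Gamma'$ given by the two epimorphisms $\alpha\colon\Gamma/M_{\lambda+1}\twoheadrightarrow\Gamma/M_\lambda\xrightarrow{\theta_\lambda}\Gamma'/M_\lambda'$ and $\beta\colon\Gamma'\twoheadrightarrow\Gamma'/M_\lambda'$. Its kernel is $M_\lambda/M_{\lambda+1}$, which is algebraic, and $\rank(\Gamma'/M_\lambda')<\kappa=\rank(\Gamma')$, so Theorem~\ref{theo: saturated}(ii) (saturation of $\Gamma'$) yields a solution $\f\colon\Gamma'\twoheadrightarrow\Gamma/M_{\lambda+1}$. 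I would then put $M_{\lambda+1}'=\ker(\f)$: from $\alpha\f=\beta$ one gets $M_{\lambda+1}'\subseteq M_\lambda'$, the induced isomorphism $\bar\f\colon\Gamma'/M_{\lambda+1}'\xrightarrow{\sim}\Gamma/M_{\lambda+1}$ carries the projection $\Gamma'/M_{\lambda+1}'\twoheadrightarrow\Gamma'/M_\lambda'$ onto $\alpha$, so $M_\lambda'/M_{\lambda+1}'\simeq\ker(\alpha)=M_\lambda/M_{\lambda+1}$ is algebraic; and taking $\theta_{\lambda+1}=\bar\f^{-1}$, the identity $\alpha=\theta_\lambda\circ(\Gamma/M_{\lambda+1}\twoheadrightarrow\Gamma/M_\lambda)$ shows that $\theta_{\lambda+1}$ descends to $\theta_\lambda$. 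This completes the recursion.

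To finish, I would run the construction up to $\lambda=\kappa$. By (c) the chain $\{M_\lambda\}_{\lambda<\kappa}$ is downward directed with trivial intersection, so $\Gamma=\varprojlim_{\lambda<\kappa}\Gamma/M_\lambda$ (for instance because $k[\Gamma]$ is then the directed union of the Hopf subalgebras $k[\Gamma/M_\lambda]$, using $k[G/N_1]\cdot k[G/N_2]=k[G/(N_1\cap N_2)]$), and likewise $\Gamma'=\varprojlim_{\lambda<\kappa}\Gamma'/M_\lambda'$; the compatible $\theta_\lambda$ then assemble into an isomorphism $\varprojlim_\lambda\theta_\lambda\colon\Gamma\xrightarrow{\sim}\Gamma'$. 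I expect the only genuine difficulty to be the successor step, specifically setting up the embedding problem so that a single solution simultaneously produces the matching refined quotient on the other side \emph{and} an isomorphism coherent with everything built so far; the trick of transporting the coalgebraic refinement across via a fibre-type embedding problem, combined with the rank bookkeeping through Proposition~\ref{prop: bound rank}, is what makes this go through. Everything else is routine transfinite recursion.
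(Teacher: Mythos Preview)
Your proposal is correct and follows essentially the same transfinite back-and-forth argument as the paper: build compatible isomorphisms between quotients of strictly increasing rank, using saturation (in the form of Theorem~\ref{theo: saturated}(ii), embedding problems with algebraic kernel) at successor stages and intersections at limits, with the rank bound $\rank(\Gamma/M_\lambda)\le|\lambda|<\kappa$ supplied by Proposition~\ref{prop: bound rank}. The only organizational difference is that you alternate sides at successor steps (one embedding problem per step, with a bookkeeping bijection $\kappa\to\mathcal N\sqcup\mathcal N'$), whereas the paper performs a forth \emph{and} a back at every successor step (two embedding problems, so that $M_\mu\subseteq N_\lambda$ and $M'_\mu\subseteq N'_\lambda$ simultaneously); both variants are standard and either works here.
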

\begin{proof}
    Let $\kappa=\ir(\Gamma)=\ir(\Gamma)$. We choose $\N$ and $\N'$, neighborhood bases at $1$ of cardinality $\kappa$ for $\Gamma$ and $\Gamma'$ respectively and we well-order them:
    $\N=\{N_\mu|\ \mu<\kappa\}$,\ $\N'=\{N'_\mu|\ \mu<\kappa\}$ with $N_0=\Gamma$ and $N_0'=\Gamma'$.

    We will use transfinite induction to construct descending chains of closed normal subgroups $\{M_\mu|\ \mu<\kappa\}$ and $\{M'_\mu|\ \mu<\kappa\}$ of $\Gamma$ and $\Gamma'$ respectively, and isomorphisms $\f_\mu\colon \Gamma/M_\mu\to \Gamma'/M'_\mu$   such that the following properties are satisfied for every $\mu<\kappa$:
    \begin{enumerate}
        \item The diagram
        $$\xymatrix{
            \Gamma/M_\mu \ar^{\f_\mu}[r] \ar[d] & \Gamma'/M'_\mu \ar[d] \\
            \Gamma/M_\lambda \ar^{\f_\lambda}[r] & \Gamma'/M'_\lambda
        }
        $$
        commutes for $\lambda<\mu$.
        \item $\ir(\Gamma/M_\mu)\leq\mu$.
        \item $M_\lambda\leq N_\mu$ and $M'_\lambda\leq N'_\mu$ for $\lambda <\mu$.
    \end{enumerate}

    For $\mu=0$, we set $M_0=N_0=\Gamma$ and $M_0'=N_0'=\Gamma'$. We also let $\f_0\colon \Gamma/M_0=1\to\Gamma'/M'_0$ be the trivial map.

    Now fix a $\mu<\kappa$ and assume that $M_\lambda,\ M'_\lambda$ and $\theta_\lambda$ have been constructed for all $\lambda<\mu$ such that (i), (ii) and (iii) holds for $\lambda<\mu$ . We have to consider two cases:

    \medskip

    {\bf Case 1:} $\mu=\lambda+1$ is a successor ordinal: Set $P_\mu=N_\lambda\cap M_\lambda$.
    The kernel of $\Gamma/P_\mu\to \Gamma/M_\lambda$ is $M_\lambda/(N_\lambda\cap M_\lambda)\simeq M_\lambda N_\lambda/N_\lambda\hookrightarrow \Gamma/N_\lambda$. Because $\Gamma/N_\lambda$ is algebraic, also the kernel of $\Gamma/P_\mu\to \Gamma/M_\lambda$ is algebraic.
     Since $\Gamma'$ is saturated, the pro-$\CC$-embedding problem $\Gamma/P_\mu\to \Gamma/M_\lambda$, $\Gamma'\to \Gamma'/M'_\lambda\xrightarrow{\f_\lambda^{-1}}\Gamma/M_\lambda$ has a solution, i.e., we can find an epimorphism $\f'\colon\Gamma'\twoheadrightarrow\Gamma/P_\mu$ such that
    $$
    \xymatrix{
        \Gamma' \ar@{->>}_{\f'}[d] \ar@{->>}[dr] & \\
        \Gamma/P_\mu \ar@{->>}[r] & \Gamma/M_\lambda&
    }
    $$
    commutes. For $P'_\mu=\ker(\f')$, this induces an isomorphism
    $\overline{\f'}\colon \Gamma'/P'_\mu\to \Gamma/P_\mu$ such that
    $$
    \xymatrix{
        \Gamma/P_\mu  \ar[d] & \Gamma'/P'_\mu \ar[d] \ar_{\overline{\f'}}[l] \\
        \Gamma/M_\lambda  & \ar_{{\f_\lambda}^{-1}}[l] \Gamma'/M'_\lambda
    }
    $$
    commutes. Set $M'_\mu=P_\mu'\cap N'_\lambda\leq M_\lambda'$. As noted above, the kernel of $\Gamma/P_\mu\to \Gamma/M_\lambda$ is algebraic. Therefore,  by Lemma \ref{lemma: ir for algebraic kernel}, $\ir(\Gamma/P_\mu)=\ir(\Gamma/M_\lambda)\leq \lambda$. If $\lambda$ is finite, $\ir(\Gamma/P_\mu)\leq 1\leq \mu$. If $\lambda$ is infinite, $|\lambda|=|\mu|$  (because $\mu=\lambda+1$). So in either case, $\ir(\Gamma/P_\mu)\leq \mu$. We consider the pro-$\CC$-embedding problem
    $$\Gamma'/M_\mu'\twoheadrightarrow\Gamma'/P_\mu',\ \Gamma\to\Gamma/P_\mu\xrightarrow{\overline{\f'}^{-1}}\Gamma'/P_\mu'$$
    for $\Gamma$. We have $\ir(\Gamma'/P_\mu')=\ir(\Gamma/P_\mu)\leq\mu<\ir(\Gamma)$.
     As $\Gamma$ is saturated, we can find a solution $\f\colon\Gamma\twoheadrightarrow \Gamma'/M_\mu'$ to the above embedding problem. Set $M_\mu=\ker(\f)\leq P_\mu=N_\lambda\cap M_\lambda$. Then we have a commutative diagram
    $$
    \xymatrix{
        \Gamma/M_\mu \ar^{\f_\mu}[r] \ar[d] & \Gamma'/M_\mu' \ar[d] \\
        \Gamma/P_\mu \ar^{{\overline{\f'}}^{-1}}[r] & \Gamma'/P'_\mu
    }
    $$
    where $\f_\mu=\overline{\f}$ is induced from $\f$.
    Stacking the above two diagrams on top of each other, we find that
    $$
    \xymatrix{
        \Gamma/M_\mu \ar^{\f_\mu}[r] \ar[d]  & \Gamma'/M_\mu' \ar[d] \\
        \Gamma/M_\lambda \ar^{\f_\lambda}[r] & \Gamma'/M_\lambda'
    }
    $$
    commutes. As the kernel of $\Gamma'/M_\mu'\twoheadrightarrow\Gamma'/P_\mu'$ is algebraic, it follows from Lemma \ref{lemma: ir for algebraic kernel} that $$\ir(\Gamma/M_\mu)=\ir(\Gamma'/M_\mu')=\ir(\Gamma'/P_\mu')\leq \mu.$$ So conditions (i), (ii) and (iii) are satisfied for $\mu$.

    \medskip

    {\bf Case 2:} $\mu$ is a limit ordinal: Set $M_\mu=\bigcap_{\lambda<\mu}M_\lambda$ and $M'_\mu=\bigcap_{\lambda<\mu}M'_\lambda$. Then the $\f_\lambda$ for $\lambda<\mu$ induce an isomorphism $\f_\mu\colon \Gamma/M_\mu\to \Gamma'/M_\mu'$. Clearly, conditions (i) and (iii) are satisfied.

    For $\lambda<\mu$ we have $\ir(\Gamma/M_\lambda)\leq|\lambda|\leq|\mu|$. Therefore condition (ii) is satisfied by Lemma \ref{lemma: ir for intersection}. This concludes the construction of the $M_\mu$, $M_\mu'$ and the isomorphisms $\f_\mu$ for $\mu<\kappa=\ir(\Gamma)$.

    \medskip

    It follows from condition (iii) that $\bigcap_{\mu<\kappa} M_\mu=1$ and $\bigcap_{\mu<\kappa} M'_\mu=1$. Therefore the $\f_\mu$ for $\mu<\kappa$ induce an isomorphism $\Gamma\to\Gamma'$.
\end{proof}

%

%
%

We continue with two examples of saturated pro-$\CC$-groups.

\begin{ex} \label{ex: saturated group for abelian unipotent}
     Let $k$ be a field of characteristic zero and $\CC$ the formation of all abelian unipotent algebraic groups.  (Cf. Example \ref{ex: free abelian unipotent}.) We claim that for any infinite cardinal $\kappa$, the proalgebraic group $\prod_{\kappa}\Ga$ is the saturated pro-$\CC$-group of rank $\kappa$.
\end{ex}
\begin{proof}
    It suffices to show that $\prod_{\kappa}\Ga$ is saturated. Maybe the clearest way to see this it to observe that the functor $V\rightsquigarrow G(V)$ with $G(V)(R)=\Hom_k(V,R)$ for any $k$-algebra $R$, is a (contravariant) equivalence of categories from the category of $k$-vector spaces to the category of pro-$\CC$-groups. We note that $G(V)$ is represented by the symmetric algebra on $V$ and that an epimorphism $G(V)\twoheadrightarrow G(W)$ of pro-$\CC$-groups corresponds to an injection $W\hookrightarrow V$ of $k$-vector spaces. Moreover, for an infinite dimensional $k$-vector space $V$, we have $\rank(G(V))=\dim_k(V)$. Thus the claim reduces to the following obvious statement: If $V$ is a vector space of dimension $\kappa$,  $V_1$ a subspace of $V$ with $\dim_k(V_1)<\kappa$ and $V_1\hookrightarrow V_2$ an embedding of vector spaces with $\dim_k(V_2)\leq\kappa$, then $V_2$ can be embedded into $V$ over $V_1$.
\end{proof}

We next determine the saturated pro-diagonalizable groups.

\begin{ex} \label{ex: saturated prodiagonalizable group}
    Let $\CC$ be the formation of all diagonalizable algebraic groups and let $\kappa$ be an infinite cardinal. According to Theorems \ref{theo: existence of saturated groups} and \ref{theo: uniqueness} there exist a unique saturated pro-$\CC$-group $\Gamma$ of rank $\kappa$. This group $\Gamma$ is of the form $\Gamma=D(M)$ for some abelian group $M$. We would like to determine $M$. According to Theorem \ref{theo: saturated} and Theorem \ref{theo: uniqueness} the group $M$ is uniquely characterized by the following properties:
    \begin{enumerate}
        \item $|M|=\kappa$ and
        \item for every subgroup $M_1$ of $M$ with $|M_1|<|M|$ and embedding of abelian groups $M_1\hookrightarrow M_2$ with $|M_2|\leq |M|$, there exists an embedding of $M_2$ over $M_1$ into $M$.
    \end{enumerate}
To be precise, if $\kappa=\aleph_0$, the statement $|M_1|<|M|$ in (ii) should be replaced by $M_1$ is finitely generated.
This unique abelian group is $M=\bigoplus_{\kappa}\mathbb{Q}\oplus\bigoplus_\kappa\mathbb{Q}/\mathbb{Z}$.\end{ex}
\begin{proof}
Let us first show that $M$ is divisible: Let $a\in M$ and $n\geq 1$, we have to show that there exists $b\in M$ with $nb=a$. Let $M_1=\langle a\rangle$ denote the cyclic subgroup of $M$ generated by $a$. Then $M_1$ is contained in a cyclic abelian group $M_2$ such that $nM_2=M_1$. (If $a$ has infinite order consider the inclusion $\mathbb{Z}n\subseteq \mathbb{Z}$ and if $a$ has finite order $m$, consider the inclusion image $\mathbb{Z}n/\mathbb{Z}mn\subseteq\mathbb{Z}/\mathbb{Z}mn$.) Since $M_2$ embeds into $M$ over $M_1$ it follows that $M$ is divisible.

Any divisible group is of the form $\bigoplus_{\kappa_0}\mathbb{Q}\bigoplus_{p\in\mathbb{P}} \oplus_{\kappa_p}\mathbb{Z}(p^\infty)$, where $\mathbb{P}$ denotes the set of all prime numbers and $\mathbb{Z}(p^\infty)$ the $p$-Pr\"{u}fer group. (See e.g., \cite[Theorem 4]{Kaplansky:InfiniteAbelianGroups}.) The cardinal numbers $\kappa_0$ and $\kappa_p$ are uniquely determined. The groups $\bigoplus_\kappa\mathbb{Q}$ and $\bigoplus_\kappa \mathbb{Z}(p^\infty)$ have cardinality $\kappa$ and so embed into $M$ by (ii). It follows that $M=\bigoplus_{\kappa}\mathbb{Q}\bigoplus_{p\in\mathbb{P}} \oplus_{\kappa}\mathbb{Z}(p^\infty)$. As $\bigoplus_{p\in\mathbb{P}}\mathbb{Z}(p^\infty)=\mathbb{Q}/\mathbb{Z}$ the claim follows.
\end{proof}

\subsection{Free pro-$\CC$-groups and saturation}

Having discussed free pro-$\CC$-groups and saturated pro-$\CC$-groups, we now address the question under which circumstances these notions coincide. Given the uniqueness of saturated pro-$\CC$-groups of a fixed rank (Theorem \ref{theo: uniqueness}) the question boils down to, whether or not a free pro-$\CC$-group is saturated.

\begin{prop} \label{prop: free groups are almost saturated}
    Let $\CC$ be a formation
    and let $X$ be an infinite set. Let
    \begin{equation} \label{eqn: embedding problem for free group}
    \xymatrix{
        \Gamma^\CC(X) \ar@{->>}^\beta[rd] \ar@{..>>}[d]& \\
        G \ar@{->>}^\alpha[r] & H
    }
    \end{equation}
    be a pro-$\CC$-embedding problem for $\Gamma^\CC(X)$ with algebraic kernel and $\rank(H)<|X|$. Assume that there exists a finite set $B\subseteq\ker(\alpha)(\overline{k})$ with $\ker(\alpha)=\langle B\rangle$. Then (\ref{eqn: embedding problem for free group}) is solvable.
\end{prop}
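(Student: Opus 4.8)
The plan is to reduce everything to constructing a single map $\psi\colon X\to G(\kb)$ that (a) converges to $1$, (b) lifts $\beta\circ\iota$ along $\alpha$ (i.e. $\alpha_{\kb}\circ\psi=\beta_{\kb}\circ\iota$, where $\iota\colon X\to\Gamma^\CC(X)(\kb)$ is the canonical map), and (c) generates $G$; the universal property then finishes the job. Indeed, given such a $\psi$, Theorem \ref{theo: free pro-alg group} yields a unique morphism $\f\colon\Gamma^\CC(X)\to G$ with $\f_{\kb}\circ\iota=\psi$; it is an epimorphism by Lemma \ref{lemma: morphisms and generation} (since $\langle\psi(X)\rangle=G$), and $\alpha\circ\f=\beta$ because $\alpha\circ\f$ and $\beta$ both restrict on $X$ to $\beta_{\kb}\circ\iota$, a map converging to $1$ (Lemma \ref{lemma: morphism and convergence to 1}) with $\langle\beta_{\kb}(\iota(X))\rangle=\beta(\Gamma^\CC(X))=H$ (Lemma \ref{lemma: morphisms and generation}), so the uniqueness clause of Theorem \ref{theo: free pro-alg group} forces $\alpha\circ\f=\beta$. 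Hence $\f$ solves (\ref{eqn: embedding problem for free group}).

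For the geometric preparation: since $\ker(\alpha)$ is algebraic, Lemma \ref{lemma: every morphism is fibre product} presents $\alpha$ as a projection $H'\times_{H''}H\to H$ with $H',H''$ algebraic and $H'\twoheadrightarrow H''$; taking $\kb$-points and using that epimorphisms of algebraic groups are surjective on $\kb$-points, this shows $\alpha_{\kb}\colon G(\kb)\to H(\kb)$ is surjective. By Corollary \ref{cor: intersection 1} pick a coalgebraic subgroup $M_0\leq G$ with $M_0\cap\ker(\alpha)=1$; then $\alpha$ restricts to a monomorphism $M_0\hookrightarrow H$, in particular inducing a bijection $M_0(\kb)\to\alpha(M_0)(\kb)$, and $\alpha(M_0)$ is coalgebraic in $H$. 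For a neighborhood basis $\N_H$ at $1$ for $H$, the family $\N_G=\{\,M_0\cap\alpha^{-1}(N)\mid N\in\N_H\,\}$ is a neighborhood basis at $1$ for $G$: it is downward directed, consists of coalgebraic subgroups, and $\bigcap_{N}\bigl(M_0\cap\alpha^{-1}(N)\bigr)=M_0\cap\ker(\alpha)=1$. Also, since $\beta\circ\iota$ converges to $1$ and $\alpha(M_0)$ is coalgebraic, all but finitely many $x\in X$ satisfy $\beta(\iota(x))\in\alpha(M_0)(\kb)$.

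Now I construct $\psi$. Because $\rank(H)<|X|$, Lemma \ref{lemma: ker has infinitely many elements} provides infinitely many $x\in X$ with $\iota(x)\in\ker(\beta)(\kb)$; writing $B=\{b_1,\dots,b_m\}$, choose distinct such $x_1,\dots,x_m$ and set $\psi(x_j)=b_j$ (legitimate, as $\alpha(b_j)=1=\beta(\iota(x_j))$). Let $E$ be the finite set $\{x_1,\dots,x_m\}$ together with those $x$ for which $\beta(\iota(x))\notin\alpha(M_0)(\kb)$. For $x\in X\smallsetminus E$, let $\psi(x)$ be the unique element of $M_0(\kb)$ mapping to $\beta(\iota(x))$ under $M_0(\kb)\xrightarrow{\ \sim\ }\alpha(M_0)(\kb)$; for the finitely many remaining $x$, let $\psi(x)$ be any preimage of $\beta(\iota(x))$ under $\alpha_{\kb}$. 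Then $\alpha_{\kb}\circ\psi=\beta_{\kb}\circ\iota$ by construction. Convergence of $\psi$ to $1$ need only be checked against $\N_G$: for $M_0\cap\alpha^{-1}(N)$ and $x\notin E$ we have $\psi(x)\in M_0(\kb)$, hence $\psi(x)\in(M_0\cap\alpha^{-1}(N))(\kb)$ iff $\beta(\iota(x))=\alpha(\psi(x))\in N(\kb)$, which holds for all but finitely many $x$ as $\beta\circ\iota$ converges to $1$. Finally, $\langle\psi(X)\rangle$ surjects onto $\langle\alpha_{\kb}(\psi(X))\rangle=\langle\beta_{\kb}(\iota(X))\rangle=H$, so $\langle\psi(X)\rangle\cdot\ker(\alpha)=G$; and $B\subseteq\psi(X)$ gives $\ker(\alpha)=\langle B\rangle\subseteq\langle\psi(X)\rangle$, whence $\langle\psi(X)\rangle=G$.

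The step I expect to be the main obstacle is arranging convergence of $\psi$ \emph{simultaneously} with the requirement $B\subseteq\psi(X)$. The resolution rests on two observations: only finitely many values of $\psi$ are "forced" (the $b_j$, plus the finitely many arbitrary preimages), and convergence is insensitive to finitely many exceptions; meanwhile the complement $M_0$ of $\ker(\alpha)$ transports $\N_H$ to a neighborhood basis $\N_G$ along which the canonical lift into $M_0$ is automatically convergent. The hypothesis $\rank(H)<|X|$ is used only through Lemma \ref{lemma: ker has infinitely many elements}, to guarantee there is enough room in $X$ to absorb the finitely many kernel generators.
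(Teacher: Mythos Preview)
Your proof is correct and follows essentially the same strategy as the paper's: both reduce to building a map $X\to G(\kb)$ that lifts $\beta\circ\iota$, contains the finitely many kernel generators $b_j$ (using Lemma \ref{lemma: ker has infinitely many elements}), and converges to $1$. The only difference is cosmetic: the paper works in the explicit fibre-product coordinates $G\simeq H'\times_{H''}H$ from Lemma \ref{lemma: every morphism is fibre product} and lifts $h\mapsto(1,h)$, whereas you phrase the same lift intrinsically via the coalgebraic complement $M_0$ of $\ker(\alpha)$---under the identification of Lemma \ref{lemma: every morphism is fibre product} your $M_0$ is exactly $\{1\}\times_{H''}\ker(H\to H'')$, so the two lifts coincide.
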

\begin{proof}
    According to Lemma \ref{lemma: every morphism is fibre product} there exist algebraic groups $H', H''$ and epimorphisms $H'\twoheadrightarrow H''$ and $H\twoheadrightarrow H''$ such that $G=H'\times_{H''}H$.

    We will define a map $\varphi\colon X\to G(\overline{k})$. Let us first define $\varphi(x)$ for $\iota(x)\notin\ker(\beta)(\overline{k})$. If $x$ maps to the identity under $\Gamma^\CC(X)\twoheadrightarrow H\twoheadrightarrow H''$, we set
    $\varphi(x)=(1,\beta(\iota(x)))\in (H'\times_{H''}H)(\overline{k})$. There are only finitely many elements, say $x_1,\ldots,x_n$, of $X$ that do not map to the identity under $\Gamma^\CC(X)\twoheadrightarrow H''$. For these we can choose $h'_i\in H'(\overline{k})$ such that the image of $h'_i$ in $H''(\overline{k})$ agrees with the image of $x_i$ in $H''(\overline{k})$. We set $\varphi(x_i)=(h_i',\beta(\iota(x_i)))\in (H'\times_{H''}H)(\overline{k})$ for $i=1,\ldots,n$.

    Now let us define $\varphi(x)$ for $\iota(x)\in\ker(\beta)(\overline{k})$.
    By assumption there exists a finite subset $B=\{b_1,\ldots,b_m\}$ of $\ker(\alpha)(\overline{k})$ with $\ker(\alpha)=\langle B \rangle$.
    It follows from Lemma \ref{lemma: ker has infinitely many elements} that $X\cap\iota^{-1}(\ker(\beta)(\overline{k}))$ is infinite. So we can find distinct elements $x_1',\ldots,x_m'\in X\cap\ker(\beta)(\overline{k})$. We set $\varphi(x_i')=b_i$ for $i=1,\ldots,m$. For $x\notin \{x_1',\ldots,x_m'\}$ we set $\varphi(x)=1$. This concludes the definition of $\varphi$. Note that by construction $\alpha\circ \varphi=\beta\circ \iota$ on $X$.
    We claim that $\varphi\colon X\to G(\overline{k})$ converges to $1$.

    If $N$ is a coalgebraic subgroup of $H$ contained in $\ker(H\twoheadrightarrow H'')$, then $1\times_{H''}N$ is a coalgebraic subgroup of $G=H'\times_{H''}H$. Moreover, every coalgebraic subgroup $N'$ of $G$ contains a coalgebraic subgroup of this form. (Namely, the intersection of $N'$ with the kernel of the projection $H'\times_{H''} H\twoheadrightarrow H'$.)
    So it suffices to see that $\varphi$ maps almost all elements of $X$ into $(1\times_{H''}N)(\kb)$, where $N$ is a coalgebraic subgroup of $H$ contained in $\ker(H\twoheadrightarrow H'')$. This however, is clear from the definition of $\varphi$.
%
%
%
%

    We next show that $\langle \varphi(X)\rangle=G$. Using Lemma \ref{lemma: morphisms and generation} we see that $\alpha(\langle \varphi(X)\rangle)=\langle\alpha(\varphi(X))\rangle=\langle\beta(\iota(X)\rangle=H$. Since $\ker(\alpha)\subseteq \langle \varphi(X)\rangle$, this implies $\langle \varphi(X)\rangle=G$.\

    Thus $\varphi$ induces an epimorphism $\f\colon\Gamma^\CC(X)\to G$. Clearly $\f$ is a solution of (\ref{eqn: embedding problem for free group}).
\end{proof}

\begin{theo} \label{theo: free=saturated}
    Let $k$ be a field of characteristic zero, $\CC$ a formation and $\Gamma$ a pro-$\CC$-group with $\rank(\Gamma)\geq|k|$. Then $\Gamma$ is isomorphic to a free pro-$\CC$-group on a set of cardinality $\rank(\Gamma)$ if and only if $\Gamma$ is saturated.
\end{theo}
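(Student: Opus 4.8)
The plan is to prove both implications using the tools assembled above. For the forward direction, suppose $\Gamma \cong \Gamma^\CC(X)$ with $|X| = \kappa := \rank(\Gamma) \geq |k|$. By Corollary \ref{cor: rank of free is X} this is consistent ($\rank(\Gamma^\CC(X)) = |X|$ in characteristic zero), and in particular $X$ is infinite, so $\Gamma^\CC(X)$ is not algebraic. To check saturation, I would verify condition (iii) of Theorem \ref{theo: saturated}: every pro-$\CC$-embedding problem for $\Gamma^\CC(X)$ with $\rank(H) < \kappa$ and almost minimal kernel is solvable. Given such a problem, the kernel $\ker(\alpha)$ is an algebraic group, and since $k$ has characteristic zero, Lemma \ref{lemma: finite generation} provides a finite subset $B \subseteq \ker(\alpha)(\kb)$ with $\ker(\alpha) = \langle B\rangle$. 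Now Proposition \ref{prop: free groups are almost saturated} applies verbatim and yields a solution. Hence $\Gamma^\CC(X)$ satisfies condition (iii) of Theorem \ref{theo: saturated}, so it is saturated.

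For the converse, suppose $\Gamma$ is saturated with $\rank(\Gamma) = \kappa \geq |k|$. A saturated group has infinite rank (the Remark after the proof of Theorem \ref{theo: saturated}), so $\kappa$ is infinite. Let $X$ be a set of cardinality $\kappa$ and form the free pro-$\CC$-group $\Gamma^\CC(X)$; by Corollary \ref{cor: rank of free is X}, $\rank(\Gamma^\CC(X)) = |X| = \kappa = \rank(\Gamma)$. By the forward direction just proved, $\Gamma^\CC(X)$ is saturated. Now both $\Gamma$ and $\Gamma^\CC(X)$ are saturated pro-$\CC$-groups of the same rank $\kappa$, so by the uniqueness theorem (Theorem \ref{theo: uniqueness}) they are isomorphic. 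Thus $\Gamma$ is isomorphic to a free pro-$\CC$-group on a set of cardinality $\rank(\Gamma)$.

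The main obstacle is the forward direction, and within it the reduction to Proposition \ref{prop: free groups are almost saturated}. The key subtlety is that Proposition \ref{prop: free groups are almost saturated} requires the kernel to be \emph{finitely topologically generated by $\kb$-points}, and it is precisely the characteristic-zero hypothesis (via Lemma \ref{lemma: finite generation}) that guarantees this for an arbitrary algebraic kernel; this is also where the condition "almost minimal kernel" gets us an \emph{algebraic} kernel to which the lemma can be applied. One should also double-check that the hypothesis $\rank(H) < \kappa$ needed in condition (iii) of Theorem \ref{theo: saturated} matches the hypothesis $\rank(H) < |X|$ of Proposition \ref{prop: free groups are almost saturated}, which it does since $|X| = \kappa$. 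Finally, one notes that the hypothesis $\rank(\Gamma) \geq |k|$ cannot be dropped: without it, Corollary \ref{cor: rank of free is X} may fail and $\Gamma^\CC(X)$ could even be algebraic (e.g. for the formation of abelian unipotent groups with $X$ finite), so the free group would not be saturated; the paper promises an explicit example showing necessity.
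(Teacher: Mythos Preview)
Your proposal is correct and follows essentially the same route as the paper: in both directions one uses Lemma~\ref{lemma: finite generation} (finite generation of algebraic groups in characteristic zero) to feed Proposition~\ref{prop: free groups are almost saturated}, concludes that $\Gamma^\CC(X)$ is saturated, and then invokes the uniqueness Theorem~\ref{theo: uniqueness}. The only cosmetic difference is that Proposition~\ref{prop: free groups are almost saturated} already handles arbitrary algebraic kernels, so one may verify condition~(ii) of Theorem~\ref{theo: saturated} directly rather than passing through the almost-minimal-kernel condition~(iii); your detour through~(iii) is harmless since the conditions are equivalent.
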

\begin{proof}
    Let $X$ be a set with $|X|=\rank(\Gamma)$. Assume first that $\Gamma\simeq \Gamma^\CC(X)$. Then $\rank(\Gamma^\CC(X))=|X|$ and because in characteristic zero any algebraic group is finitely generated (Lemma \ref{lemma: finite generation}), it follows from Proposition \ref{prop: free groups are almost saturated} that $\Gamma\simeq \Gamma^\CC(X)$ is saturated.

    Conversely, assume $\Gamma$ is saturated. We know from Corollary \ref{cor: rank of free is X} that $\rank(\Gamma^\CC(X))=|X|$. So, as above, it follows from Proposition \ref{prop: free groups are almost saturated} that $\Gamma^\CC(X)$ is saturated. Since $\rank(\Gamma^\CC(X))=\rank(\Gamma)$ we can conclude that $\Gamma^\CC(X)$ and $\Gamma$ are isomorphic by Theorem \ref{theo: uniqueness}.
\end{proof}

\begin{cor} \label{cor: Iwasawa for algebaic groups}
    Let $k$ be a countable field of characteristic zero and let $\CC$ be a formation. Moreover, let $X$ be a countably infinite set and $\Gamma$ a pro-$\CC$-group of countable rank. Then $\Gamma$ is isomorphic to the free pro-$\CC$-group $\Gamma^\CC(X)$ if and only if every $\CC$-embedding problem for $\Gamma$ is solvable.
\end{cor}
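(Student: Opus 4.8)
The plan is to read the corollary as the special case of this section's two main results obtained when $\rank(\Gamma)=\aleph_0$. Since $k$ is countable of characteristic zero, $k$ and $\overline{k}$ are countably infinite, so $|X|=|k|=\aleph_0$; this is exactly the regime $\rank(\Gamma)=|X|=|k|$ in which Theorem \ref{theo: free=saturated} (free $\iff$ saturated) applies, and in which the ``$\rank(H)<\rank(\Gamma)$'' clauses of Theorem \ref{theo: saturated} reduce to ``$H$ algebraic''. So the whole proof is: translate ``free'' into ``saturated'' via Theorem \ref{theo: free=saturated}, and translate ``saturated'' into ``every $\CC$-embedding problem is solvable'' via condition (ii) of Theorem \ref{theo: saturated}.

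For the direction $\Gamma\simeq\Gamma^\CC(X)\Rightarrow$ solvability, I would argue as follows. By Corollary \ref{cor: rank of free is X}, $\rank(\Gamma)=|X|=\aleph_0=|k|$, so Theorem \ref{theo: free=saturated} shows $\Gamma$ is saturated; in particular condition (ii) of Theorem \ref{theo: saturated} holds. Now let $\alpha\colon G\twoheadrightarrow H$, $\beta\colon\Gamma\twoheadrightarrow H$ be any non-trivial $\CC$-embedding problem for $\Gamma$. Then $G\in\CC$ forces $G$ and $H$ to be algebraic, hence $\ker(\alpha)$ is algebraic and $\rank(H)\leq 1<\aleph_0=\rank(\Gamma)$; condition (ii) gives a solution. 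Trivial $\CC$-embedding problems are always solvable, so every $\CC$-embedding problem for $\Gamma$ is solvable.

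For the converse, assume every $\CC$-embedding problem for $\Gamma$ is solvable. First I would pin down $\rank(\Gamma)=\aleph_0$: pick a non-trivial $G\in\CC$ and build compatible epimorphisms $\beta_n\colon\Gamma\twoheadrightarrow G^n$ by successively solving the $\CC$-embedding problems $G^{n+1}\twoheadrightarrow G^n$, $\beta_n\colon\Gamma\twoheadrightarrow G^n$ (starting from $\beta_0\colon\Gamma\twoheadrightarrow G^0=1$); passing to the limit yields an epimorphism $\Gamma\twoheadrightarrow\prod_{\nn}G$, and $\rank(\prod_{\nn}G)=\aleph_0$ by Example \ref{ex: rank for fibre product}, so $\rank(\Gamma)\geq\aleph_0$ by Lemma \ref{lemma: rank for subgroups and quotients}; since $\rank(\Gamma)$ is countable it equals $\aleph_0=|k|$. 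Next I would verify condition (ii) of Theorem \ref{theo: saturated}: given a non-trivial pro-$\CC$-embedding problem $\alpha\colon G\twoheadrightarrow H$, $\beta\colon\Gamma\twoheadrightarrow H$ with algebraic kernel and $\rank(H)<\rank(\Gamma)=\aleph_0$, the group $H$ is algebraic, and since $\ker(\alpha)$ is algebraic, Lemma \ref{lemma: ir for algebraic kernel} gives $\rank(G)=\rank(H)<\aleph_0$, so $G$ is algebraic; being an algebraic quotient of the pro-$\CC$-group $G$ (namely of itself), $G\in\CC$, so this is a $\CC$-embedding problem and is solvable by hypothesis. Hence $\Gamma$ is saturated with $\rank(\Gamma)=\aleph_0=|k|$, and Theorem \ref{theo: free=saturated} gives $\Gamma\simeq\Gamma^\CC(X')$ for a set $X'$ of cardinality $\rank(\Gamma)=\aleph_0=|X|$, i.e.\ $\Gamma\simeq\Gamma^\CC(X)$.

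I expect no serious obstacle: the substance is entirely contained in Theorems \ref{theo: free=saturated} and \ref{theo: saturated}, and what remains is the bookkeeping that, once $\rank(\Gamma)>1$, a pro-$\CC$-embedding problem with algebraic kernel and $\rank(H)<\rank(\Gamma)$ is automatically a $\CC$-embedding problem (via Lemma \ref{lemma: ir for algebraic kernel}). The only point needing a genuine (if short) argument is ruling out $\rank(\Gamma)$ finite in the converse direction; if ``countable rank'' is understood to mean ``countably infinite rank'' this step is unnecessary, and in any event for finite $\rank(\Gamma)$ both sides of the equivalence fail — the left by Corollary \ref{cor: rank of free is X}, the right by the $\prod_{\nn}G$ argument above.
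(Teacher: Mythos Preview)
Your proposal is correct and follows essentially the same approach as the paper: translate ``free'' to ``saturated'' via Theorem \ref{theo: free=saturated}, and translate ``saturated'' to ``every $\CC$-embedding problem is solvable'' via condition (ii) of Theorem \ref{theo: saturated}. You supply more detail than the paper does---in particular, the explicit $\prod_{\nn}G$ argument to force $\rank(\Gamma)=\aleph_0$ and the use of Lemma \ref{lemma: ir for algebraic kernel} to reduce condition (ii) to $\CC$-embedding problems---but these are exactly the justifications the paper leaves implicit.
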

\begin{proof}
By Corollary \ref{cor: rank of free is X} the free pro-$\CC$-group $\Gamma^\CC(X)$ has rank $|X|$. By Theorem \ref{theo: free=saturated} $\Gamma\simeq \Gamma^\CC(X)$ is saturated. Since $\rank(\Gamma)$ is countable, condition (ii) of Theorem \ref{theo: saturated} reduces to the statement that all $\CC$-embedding problems are solvable.

Conversely, if every $\CC$-embedding problem for $\Gamma$ is solvable, then $\rank(\Gamma)$ has to be infinite. Since $\rank(\Gamma)$ is countable, we see that $\Gamma$ is saturated. Thus $\Gamma\simeq \Gamma^\CC(X)$ by Theorem \ref{theo: free=saturated}.
\end{proof}

Theorem \ref{theo: free=saturated} is not true in positive characteristic. The corresponding statement in positive characteristic fails for two reasons. Firstly, a free proalgebaic group over a perfect field is always reduced (Remark \ref{rem: free group is reduced}) while a saturated pro-$\CC$-group need not be reduced. Indeed, if $\CC$ contains at least one algebraic group that is not reduced, then a saturated pro-$\CC$-group is not reduced.

Secondly, and maybe more fundamentally, not all algebraic groups in positive characteristic are finitely generated, so, if $\CC$ contains an algebraic group that is not finitely generated, then it cannot be a quotient of a free pro-$\CC$-group and therefore the free pro-$\CC$-group cannot be saturated.

We conclude this section with two examples that illustrate Theorem \ref{theo: free=saturated}. These examples also show that Theorem \ref{theo: free=saturated} is not true without the assumption that $\rank(\Gamma)\geq |k|$.

\begin{ex}
    Let $k$ be a field of characteristic zero, $\CC$ the formation of all abelian unipotent groups and $\kappa$ an infinite cardinal. According to Examples \ref{ex: saturated group for abelian unipotent} and \ref{ex: free abelian unipotent}, the saturated pro-$\CC$-group of rank $\kappa$ is $\prod_\kappa \Ga$, while the free pro-$\CC$-group on a set of cardinality $\kappa$ is $\prod_{Y}\Ga$, where $|Y|=\kappa\cdot \dim_k(\kb)$. These groups are isomorphic if and only if $\dim_k(\kb)\leq\kappa$. As predicted by Theorem \ref{theo: free=saturated}, this is the case if $\kappa\geq|k|$.
\end{ex}

\begin{ex}
    Let $\CC$ be the formation of all diagonalizable algebraic groups and let $X$ be an infinite set. We saw in Example \ref{ex: free prodiagonalizable group} that $\Gamma^\CC(X)=D(\bigoplus_X \overline{k}^\times)$. On the other hand, we saw in Example \ref{ex: saturated prodiagonalizable group}
    that the saturated pro $\CC$-group of rank $|X|$ is given by $D(\bigoplus_{X}\mathbb{Q}\oplus\bigoplus_X\mathbb{Q}/\mathbb{Z})$.    In positive characteristic $p$ these groups are not isomorphic since $\bigoplus_X \overline{k}^\times$ has no $p$-torsion, whereas $\bigoplus_{X}\mathbb{Q}\oplus\bigoplus_X\mathbb{Q}/\mathbb{Z}$ does.
    So let us assume that $k$ has characteristic zero. The group  $\bigoplus_{X}\mathbb{Q}\oplus\bigoplus_X\mathbb{Q}/\mathbb{Z}$ has cardinality $|X|$, whereas the group $\bigoplus_X \overline{k}^\times$ has cardinality $|X|\cdot|k|$. Thus if $|X|<|k|$, the groups are not isomorphic. On the other hand, if $|X|\geq |k|$, the groups must be isomorphic by Theorem \ref{theo: free=saturated}. An explicit (non-canonical) isomorphism between the two groups can be constructed by observing that $\overline{k}^\times\simeq\mathbb{Q}/\mathbb{Z}\oplus_\kappa\mathbb{Q}$ for some cardinal $\kappa\leq|k|$.
\end{ex}


\bibliographystyle{alpha}

\end{document}